\documentclass[reqno]{amsart}  
\theoremstyle{plain}
\newtheorem{theorem}{Theorem}[section]
\newtheorem{lemma}[theorem]{Lemma}
\newtheorem{remark}[theorem]{Remark}
\newtheorem{proposition}[theorem]{Proposition}

\newtheorem{example}[theorem]{Example}
\textwidth 5.1in
\numberwithin{equation}{section}
\allowdisplaybreaks[1]

\theoremstyle{definition}

\newtheorem{definition}[theorem]{Definition}

\theoremstyle{remark}

\newcommand{\bQ}{{Z_{\rm row}}}

\newcommand{\bU}{{\mathbf U}}

\newcommand{\bz}{{\mathbf z}}
\newcommand{\bzeta}{{\boldsymbol \zeta}}

\newcommand{\bT}{{\mathbf T}}
\newcommand{\lam}{\lambda}

\newcommand{\bn}{{\mathbf n}}

\newcommand{\cD}{{\mathcal D}}
\newcommand{\cF}{{\mathcal F}}
\newcommand{\cH}{{\mathcal H}}
\newcommand{\cL}{{\mathcal L}}
\newcommand{\cM}{{\mathcal M}}
\newcommand{\cR}{{\mathcal R}}
\newcommand{\cK}{{\mathcal K}}
\newcommand{\cfm}{\mbox{\bf{c.f.m.}}}
\newcommand{\dcfm}{\mbox{\bf{d.c.f.m.}}}
\newcommand{\tcfm}{\mbox{\bf{t.c.f.m.}}}
\newcommand{\cS}{{\mathcal S}}

\newcommand{\cU}{{\mathcal U}}
\newcommand{\cX}{{\mathcal X}}
\newcommand{\cY}{{\mathcal Y}}

\newcommand{\D}{{\mathbb D}}

\newcommand{\C}{{\mathbb C}}
\newcommand{\B}{{\mathbb B}}

\newcommand{\tA}{\widetilde{A}}
\newcommand{\tB}{\widetilde{B}}
\newcommand{\tC}{\widetilde{C}}
\newcommand{\tD}{\widetilde{D}}

\newcommand{\sbm}[1]{\left[\begin{smallmatrix} #1
		\end{smallmatrix}\right]}

\begin{document}

\title[Transfer-function realization]{Canonical transfer-function
realization for Schur multipliers on the Drury-Arveson space and 
models for commuting row contractions}
\author[J. A. Ball]{Joseph A. Ball}
\address{Department of Mathematics,
Virginia Tech,
Blacksburg, VA 24061-0123, USA}
\email{ball@math.vt.edu}
\author[V. Bolotnikov]{Vladimir Bolotnikov}
\address{Department of Mathematics,
The College of William and Mary,
Williamsburg VA 23187-8795, USA}
\email{vladi@math.wm.edu}

\begin{abstract}
We develop a $d$-variable analog of the two-component de 
Bran\-ges-Rovnyak reproducing kernel Hilbert space associated with a 
Schur-class function on the unit disk.  In this generalization, the 
unit disk is replaced by the unit ball in $d$-dimensional complex 
Euclidean space, and the Schur class becomes the class of contractive multipliers 
on the Drury-Arveson space over the ball.  We also develop some 
results on a model theory for commutative row contractions which are 
not necessarily completely noncoisometric (the case 
considered in earlier work of Bhattacharyya, Eschmeier and Sarkar).
\end{abstract}

\subjclass{47A13, 47A45, 47A48}
\keywords{Operator-valued Schur-class functions, Agler decomposition,
unitary realization, operator model theory}

\maketitle

\section{Introduction}  \label{S:Intro}
\setcounter{equation}{0}
For $\cU$ and $\cY$ two Hilbert spaces we let $\cL(\cU,\cY)$
be the space of bounded linear operators mapping
$\cU$ into $\cY$, abbreviated to $\cL(\cU)$ in case $\cU=\cY$.
The operator-valued version of the classical Schur class ${\mathcal
S}(\cU, \cY)$ is defined to be the set of all holomorphic, contractive
$\cL(\cU, \cY)$-valued functions on the unit disk ${\mathbb D}$. With any such
function
$S: \D\to\cL(\cU,\cY)$, one can associate the following three
operator-valued kernels
\begin{equation}
K_{S}(z, \zeta) = \frac{ I_{\cY} - S(z) S(\zeta)^{*}}{1
-z\overline{\zeta}},\quad
\widetilde{K}_S(z,\zeta)=\frac{I_{\cU}-S(z)^*S(\zeta)}{1-\overline{z}\zeta},
\label{1.1}
\end{equation}
\begin{equation}
\label{1.2}
\widehat K(z,\zeta) =
\begin{bmatrix} K_{S}(z, \zeta)  & 
{\displaystyle\frac{ S(z) -S(\zeta)}{ z -\zeta}}\\
{\displaystyle\frac{ S(z)^{*} - 
S(\zeta)^{*}}{ \overline{z} - \overline{\zeta}}} &
\widetilde{K}_{S}(z, \zeta) \end{bmatrix}.
\end{equation}  
The Schur class ${\mathcal S}(\cU, \cY)$ can be characterized as the
set 
of all $\cL(\cU, \cY)$-valued function on $\D$ for which any (and 
therefore every) of the above three kernels is positive on
$\D\times\D$.
Furthermore, for every function $S\in\cS(\cU,\cY)$ there exists  an 
auxiliary Hilbert space $\cX$ and a unitary connecting operator (or 
colligation) 
\begin{equation}
\label{1.3}  
\bU =\begin{bmatrix}A & B \\ C & D\end{bmatrix}\colon
\begin{bmatrix}\cX \\ \cU\end{bmatrix}\to \begin{bmatrix}\cX \\ 
\cY\end{bmatrix}
\end{equation}
so that $S(z)$ can be expressed as
\begin{equation}  
\label{1.4}
S(z) = D +z C (I -z A)^{-1} B \quad\mbox{for all}\quad z\in\D.
\end{equation}
On the other hand, if $\bU$ of the form \eqref{1.3} is a contraction, 
then the function $S$ of the form \eqref{1.4} belongs to
$\cS(\cU,\cY)$.
The formula \eqref{1.4} is called a {\em realization} of the
function $S$ which in turn, is called the {\em characteristic
function} of
the colligation \eqref{1.3}. The realization is called unitary, 
isometric, coisometric or contractive if the connecting operator
$\bU$ is 
respectively, unitary, isometric, coisometric or contractive. It is
seen from \eqref{1.4} that for any realization $\bU$ of $S$, the entry $D$
is uniquely determined and equals $S(0)$. As was shown 
in \cite{dbr1}, \cite{dbr2}, \cite{dBS} for unitary, isometric or 
coisometric realizations, the state space $\cX$ and the operators
$A$, $B$  and $C$ can be chosen in a certain canonical way (specific for each
type) and these realizations are unique up to unitary equivalence under
certain minimality conditions which we now recall. With a colligation
\eqref{1.3} we associate the observability subspace $\cH^{\mathcal
O}_{C,A}$ and the controllability subspace $\cH^{\mathcal C}_{A,B}$ by
\begin{equation}
\cH^{\mathcal O}_{C,A}:=\bigvee_{n\ge 0} {\rm Ran}A^{*n}C^*, \qquad
\cH^{\mathcal C}_{A,B}:=\bigvee_{n\ge 0} {\rm Ran}A^{n}B,
\label{1.5}
\end{equation}
where $\bigvee$ denotes the closed linear span. 
\begin{definition}
The colligation $\bU=\sbm{A & B \\ C & D}\colon
\sbm{\cX \\ \cU}\to \sbm{\cX \\ \cY}$ is called {\em observable}, 
{\em controllable} or {\em closely connected} if respectively,
$$
\cH^{\mathcal O}_{C,A}=\cX,\quad \cH^{\mathcal 
C}_{A,B}=\cX\quad\mbox{or}\quad \cH^{\mathcal
O}_{C,A}\bigvee\cH^{\mathcal 
C}_{A,B}=\cX.
$$ 
Furthermore, $\bU$ is called {\em unitarily equivalent} to a
colligation
\begin{equation}
\widetilde {\mathbf U}
= \begin{bmatrix}\tA& \tB \\ \tC & D\end{bmatrix}
\colon\; \begin{bmatrix}\widetilde{\cX} \\  \cU\end{bmatrix}\to 
\begin{bmatrix}\widetilde{\cX}\\ 
\cY\end{bmatrix}
\label{1.6}
\end{equation}
if there exists a unitary operator $U\colon \cX \to\widetilde{\cX}$
such that
\begin{equation}
UA=\tA U,\quad UB=\tB\quad\mbox{and}\quad C=\tC U.
\label{1.7}
\end{equation}
\label{D:1.2}
\end{definition}
We remark that some authors attribute the notions of observability and
controllability to the pairs $(C,A)$ and $(A,B)$ rather than to the
whole colligation $\bU$, and call the colligation with an  observable
output pair $(C,A)$ and/or with a controllable input pair  $(A,B)$ 
respectively  {\em closely outer-connected} and/or {\em  closely 
inner-connected}.

\smallskip

Recall that given an $S\in\cS(\cU,\cY)$, the three associated kernels 
in \eqref{1.1} and \eqref{1.2} are positive and give rise to the 
respective  reproducing kernel Hilbert spaces $\cH(K_{S})$,
$\cH(\widetilde{K}_{S})$ and $\cH(\widehat{K}_{S})$ (called
{\em de Branges-Rovnyak reproducing kernel Hilbert spaces}).  Observe
that 
the kernel $K_S(z,\zeta)$ is analytic in $z,\overline{\zeta}$ and
therefore, all functions in the associated space $\cH(K_S)$ are
analytic
on $\D$. The kernel $\widetilde{K}_S$ is analytic in $\overline{z}$
and
$\zeta$ and the associated space $\cH(\widetilde{K}_{S})$ consists of
conjugate-analytic functions. Similarly, the elements of
$\cH(\widehat{K}_{S})$ are the functions of the form $f=\sbm{f_+
\\ f_-}$ where $f_+$ is analytic and $f_-$ is
conjugate-analytic. The following theorem summarizes realization
results 
from \cite{dbr1}, \cite{dbr2}, \cite{dBS}. 

\begin{theorem}
\label{T:sum}
Let $S\in\cS(\cU,\cY)$ and let $D:=S(0)$.
\begin{enumerate} 
\item Let $\cX=\cH(K_S)$ and let
\begin{equation}
A \colon f(z) \mapsto\frac{f(z) - f(0)}{z},\quad
B \colon u\mapsto \frac{S(z) - S(0)}{z} u, \quad
C \colon f(z) \mapsto f(0),
\label{1.8}
\end{equation}
Then ${\bf U}$ \eqref{1.3} is an observable coisometric colligation 
with its characteristic function equal to $S$. Any observable 
coisometric colligation $\widetilde {\mathbf U}$ \eqref{1.7}
with its characteristic function equal to $S$ is unitarily equivalent to 
$\bU$.
\item Let $\cX=\cH(\widetilde{K}_S)$ and let $B \colon u \mapsto 
(I_\cU-S(z)^*S(0))u$,
\begin{equation}
A^* \colon f(z) \mapsto\frac{f(z) - f(0)}{\overline{z}},\quad 
C^* \colon y  \mapsto {\displaystyle\frac{S(z)^* -
S(0)^*}{\overline{z}}} y.
\label{1.9}
\end{equation}
Then ${\bf U}$ \eqref{1.3} is a controllable isometric colligation
with its characteristic function equal to $S$. Any controllable
isometric colligation $\widetilde {\mathbf U}$ \eqref{1.7}
with its characteristic function equal to $S$ is unitarily equivalent to
$\bU$.
\item Let $\cX=\cH(\widehat{K}_S)$ and let
\begin{equation}   \label{1.9'}
\begin{array}{ll}
A \colon \begin{bmatrix} f(z) \\ g(z) \end{bmatrix}\mapsto
\begin{bmatrix} [f(z) - f(0)]/z \\ \overline{z}g(z) - S(z)^* f(0)
\end{bmatrix}, & \quad B \colon u \mapsto \begin{bmatrix}
\frac{S(z) - S(0)}{z} u \\ (I - S(z)^{*} S(0))u \end{bmatrix},  \\
C \colon \begin{bmatrix} f(z) \\ g(z) \end{bmatrix} \mapsto f(0).
  \end{array}
\end{equation}
Then ${\bf U}$ \eqref{1.3} is a closely connected unitary 
colligation with its characteristic function equal to $S$. Any 
closely connected unitary colligation with its characteristic 
function equal to $S$ is unitarily equivalent to $\bU$.
\end{enumerate}
\end{theorem}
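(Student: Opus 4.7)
My plan is to treat the three parts in parallel, since the uniqueness-up-to-unitary-equivalence in each case follows from a common template: given another realization of the desired type, build the intertwining unitary out of the natural observability/controllability functional.

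For part (1), the first step is to compute the action of $A^*$, $B^*$, $C^*$ on kernel functions. The reproducing property gives $C^*y = K_S(\cdot,0)y$, and a direct calculation using the defining formula for $A$ shows that $A^* : K_S(\cdot,\zeta)y \mapsto \bar\zeta\, K_S(\cdot,\zeta)y - K_S(\cdot,0) S(\zeta)^* y$, while $B^*$ acts as $f \mapsto f(0)$-type evaluations composed with $S$. The coisometry relation $\bU\bU^* = I$ then reduces, on the dense span of kernel functions, to the single kernel identity $(1-z\bar\zeta) K_S(z,\zeta) = I_\cY - S(z) S(\zeta)^*$ rewritten as a block matrix identity. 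Boundedness of $A$ drops out of its coisometricity. The realization formula $S(z) = D + zC(I-zA)^{-1}B$ follows by expanding $(I-zA)^{-1}B u$ geometrically and recognizing the telescoping series for $(S(z)-S(0))/z$. Observability follows since $A^{*n}C^*y$ generates the Taylor coefficients in $\bar\zeta$ of $K_S(\cdot,\zeta)y$, whose closed linear span is all of $\cH(K_S)$.

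For part (2), the argument is exactly dual, working with $\widetilde K_S$ instead of $K_S$: one computes $A$ on kernel functions (replacing $z$ by $\bar z$ in the backward shift), verifies $\bU^*\bU = I$ using the identity $(1-\bar z\zeta)\widetilde K_S(z,\zeta) = I_\cU - S(z)^* S(\zeta)$, and obtains controllability from the analogous Taylor expansion of $\widetilde K_S(\cdot,\zeta)u$ in $\zeta$. Part (3) combines the two: define $A$, $B$, $C$ componentwise as prescribed; then the block matrix identity $\bU^*\bU = \bU\bU^* = I$ breaks up into four scalar identities, each of which is a rearrangement of the entries of the $2\times 2$ kernel $\widehat K_S$ of \eqref{1.2}. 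The closely connected property is immediate because $\cH^{\mathcal O}_{C,A}$ recovers the top component of $\widehat K_S(\cdot,\zeta)\sbm{y\\0}$-Taylor data and $\cH^{\mathcal C}_{A,B}$ recovers the bottom component (and the cross-terms $(S(z)-S(\zeta))/(z-\zeta)$ stitch the two halves together).

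For uniqueness, given another realization $\widetilde\bU$ of the prescribed type, define
\[
U : \widetilde\cX \to \cX, \qquad U\tilde x \;=\; \bigl(\tilde C(I - z\tilde A)^{-1}\tilde x\bigr)(z)
\]
in parts (1) and (3) (with the conjugate-analytic second component $\tilde B^*(I - \bar z \tilde A^*)^{-1}\tilde x$ added in (3)); in part (2) use the dual map built from $\tilde B$. The (co)isometry hypothesis on $\widetilde\bU$ combined with the same kernel identity used above shows that $U$ is an isometry from $\widetilde\cX$ into $\cH(\cdot)$, and the observability/controllability/closely-connected hypothesis shows its range is dense, hence $U$ is unitary. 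The intertwinings \eqref{1.7} then follow from direct computation with the defining formulas for $\tilde A$, $\tilde B$, $\tilde C$ in terms of the characteristic function.

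The main obstacle is part (3): keeping track of the cross-terms between the analytic and conjugate-analytic components in $\widehat K_S$ and making sure both the unitary colligation identities and the intertwining map $U$ split correctly across the two components. Everything else is either a direct kernel identity or a standard reproducing-kernel density argument, so the real work is the bookkeeping in part (3); parts (1) and (2) serve mainly as templates.
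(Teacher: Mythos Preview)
Your overall strategy is sound and is essentially the route the paper takes (implicitly, since Theorem~\ref{T:sum} is only cited there, but the $d=1$ specialization of the arguments in Sections~\ref{S}--\ref{S:unit} reproduces exactly what you describe): rewrite the defining kernel identity as an inner-product identity on the span of kernel elements, read off from this an isometry $V$ whose adjoint is the model colligation $\bU$, and for uniqueness use the map $U\tilde x = \tilde C(I-z\tilde A)^{-1}\tilde x$ (with the second component $\tilde B^{*}(I-\bar z\tilde A^{*})^{-1}\tilde x$ adjoined in part~(3)).  Compare \eqref{2.5}, Remark~\ref{R:2.4}, and Steps~1--3 in the proof of Theorem~\ref{T:4.7}.

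Two points need repair.  First, your formula for $A^{*}$ on kernel functions is wrong: from $\langle Af,K_{S}(\cdot,\zeta)y\rangle=\langle (f(\zeta)-f(0))/\zeta,\,y\rangle$ one gets
\[
A^{*}K_{S}(\cdot,\zeta)y=\bar\zeta^{\,-1}\bigl(K_{S}(\cdot,\zeta)-K_{S}(\cdot,0)\bigr)y\qquad(\zeta\ne 0),
\]
equivalently $\bar\zeta\,A^{*}K_{S}(\cdot,\zeta)y+C^{*}y=K_{S}(\cdot,\zeta)y$, which is the $d=1$ case of \eqref{4.23}/\eqref{4.45}.  Your expression $\bar\zeta K_{S}(\cdot,\zeta)y-K_{S}(\cdot,0)S(\zeta)^{*}y$ does not satisfy this.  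Second, the sentence ``boundedness of $A$ drops out of its coisometricity'' is circular: you cannot form $\bU\bU^{*}$ until you know $A,B$ are bounded on $\cH(K_{S})$.  The fix is the one the paper uses: define the isometry $V$ on the dense span of $\{\bar\zeta K_{S}(\cdot,\zeta)y\}\oplus\cY$ via the kernel identity, extend by continuity, set $\bU:=V^{*}$, and \emph{then} verify that its blocks are given by \eqref{1.8}.  With these two corrections your sketch goes through.
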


We mention that such de Branges-Rovnyak reproducing kernel Hilbert 
spaces can be used as canonical functional 
model Hilbert spaces for contraction operators of various classes 
(namely, completely noncoisometric, completely nonisometric, and 
completely nonunitary)---see \cite{NF, BK}.

\smallskip

The objective of this paper is to extend these realization results 
to the following multivariable setting. We denote by 
$\B^d=\left\{z=(z_1,\dots,z_d)\in\C^d \colon
\langle z, z\rangle<1\right\}$ the unit ball of the Euclidean space
$\C^d$ with the standard inner product $\langle z,  
\zeta\rangle=\sum_{j=1}^d z_j\bar{\zeta}_j$. The kernel 
$k_d(z,\zeta)=\frac{1}{1-\langle z,\zeta\rangle}$ is positive on 
${\mathbb B}\times {\mathbb B}$ and we denote by $\cH(k_d)$ the
associated 
reproducing kernel Hilbert space (the Drury-Arveson space) which for 
$d=1$ is the usual Hardy space $H^2$ of the unit disk. For a Hilbert
space 
$\cY$, we use notation $\cH_\cY(k_d)$ for the Drury-Arveson space of
$\cY$-valued functions which can be characterized in terms of power
series 
as follows:
\begin{equation}
\cH_{\cY}(k_d)=\left\{f(z)=\sum_{\bn \in{\mathbb Z}^d_{+}}f_{n}
z^n:\|f\|^{2}=\sum_{n \in {\mathbb Z}^d_{+}}
\frac{n!}{|n|!}\|f_{n}\|_{\cY}^2<\infty\right\}.
\label{char}
\end{equation}
Here and in what follows, we use standard multivariable notations: for
multi-integers $n =(n_{1},\ldots,n_{d})\in{\mathbb Z}_+^d$ and points
$z=(z_1,\ldots,z_d)\in\C^d$ we set
\begin{equation}
|n| = n_{1}+n_{2}+\ldots +n_{d},\quad
n!  = n_{1}!n_{2}!\ldots n_{d}!, \quad
z^\bn = z_{1}^{n_{1}}z_{2}^{n_{2}}\ldots
z_{d}^{n_{d}}.
\label{mnot}
\end{equation}
Given two Hilbert spaces $\cU$ and $\cY$, we denote by
$\cS_d(\cU,\cY)$ the 
class of $\cL(\cU,\cY)$-valued functions 
$S$ on $\B^d$ such that the multiplication operator $M_S: \; f\mapsto 
S\cdot f$ defines a contraction from $\cH_{\cU}(k_d)$ into 
$\cH_{\cY}(k_d)$ or equivalently, such that the kernel
\begin{equation}
\label{1.10}
K_{S}(z, \zeta) = \frac{ I_{\cY} - S(z) S(\zeta)^{*}}
{1 - \langle z, \zeta \rangle}
\end{equation}
is  positive on ${\mathbb B}\times {\mathbb B}$. It is readily seen
that 
the class $S_1(\cU,\cY)$ is the Schur class introduced above. In
general, 
it follows from positivity of $K_S$ that $S$ is holomorphic and takes 
contractive values on $\B^d$. However, for $d>1$ there are analytic 
contractive-valued functions on $\B^d$ not in $\cS_d$. The class 
$\cS_d(\cU,\cY)$ can be characterized in 
various ways similarly to the one-variable situation. Here we recall
the  one from  \cite{BTV} given in terms of norm-constrained realizations.

\smallskip

In what follows we use notation $Z_{\rm row}(z)=\begin{bmatrix}z_1 &
\ldots &
z_d\end{bmatrix}$ and for a Hilbert space $\cX$ we let 
\begin{equation} \label{1.12a}
Z_{\cX}(z):=Z_{\rm row}(z)\otimes I_{\cX}=
\begin{bmatrix}z_1 I_\cX & \ldots & z_d I_\cX \end{bmatrix}.  
\end{equation}
\begin{theorem}   \label{T:1.1}
If a function $S \colon {\mathbb B}^{d} \to \cL(\cU, \cY)$ belongs to 
$\cS_{d}(\cU,\cY)$, then there is an auxiliary Hilbert space $\cX$
and a unitary connecting operator (or colligation)
\begin{equation}
\label{1.11}
\bU = \begin{bmatrix} A & B \\ C & D \end{bmatrix} =
\begin{bmatrix}  A_{1} & B_{1} \\ \vdots & \vdots \\ A_{d} & B_{d}
    \\ C & D \end{bmatrix} \colon
\begin{bmatrix} \cX \\ \cU \end{bmatrix} \to \begin{bmatrix} \cX^{d}\\
\cY \end{bmatrix}
\end{equation}
so that $S(z)$ can be realized as
\begin{align}  \label{1.12}   
S(z) &=D + C
(I-z_1A_1-\cdots-z_dA_d)^{-1}(z_1B_1+\ldots+z_dB_d)\notag\\
&=D + C (I_{\cX} - Z_{\cX}(z) A)^{-1}Z_{\cX}(z) B\qquad (z\in\B^d).
\end{align}
Conversely, if $\bU$ of the form \eqref{1.11} is a 
contraction, then
the function $S$ of the form \eqref{1.12} belongs to
$\cS_{d}(\cU,\cY)$.
\end{theorem}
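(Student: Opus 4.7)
The plan is to handle the two directions separately, both centered on the state-space identity
\begin{equation*}
\bU^* \bbm{Z_\cX(\zeta)^* H(\zeta)^* \\ I_\cY} = \bbm{H(\zeta)^* \\ S(\zeta)^*}
\end{equation*}
for a suitable operator function $H\colon \B^d \to \cL(\cX, \cY)$. In the converse direction one takes $H(z) := C(I_\cX - Z_\cX(z) A)^{-1}$, while in the forward direction $H$ is produced from a Kolmogorov factorization of $K_S$.

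For the converse direction (easier), suppose $\bU$ as in \eqref{1.11} is a contraction. Since $\bU^*\bU \preceq I$ gives $\|A\| \le 1$, and $\|Z_\cX(z)\| = \|z\| < 1$ on $\B^d$, the Neumann series defining $H(z) = C(I_\cX - Z_\cX(z) A)^{-1}$ converges; manipulating $Z_\cX(z)A = I_\cX - (I_\cX - Z_\cX(z)A)$ then verifies the displayed identity by direct computation. Taking inner products of the two sides against each other (for varying $z,\zeta,y_1,y_2$) and writing $\bU\bU^* = I - (I - \bU\bU^*)$ yields
\begin{equation*}
I_\cY - S(z)S(\zeta)^* = (1-\langle z,\zeta\rangle)\, H(z) H(\zeta)^* + N(z,\zeta),
\end{equation*}
where $N$ is an operator-valued positive kernel on $\B^d$ arising from $I - \bU\bU^* \succeq 0$. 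Dividing by $1-\langle z,\zeta\rangle$ exhibits $K_S$ as a sum of two positive kernels, so $S \in \cS_d(\cU,\cY)$.

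For the forward direction I would run a ``lurking isometry'' argument. Positivity of $K_S$ produces by Kolmogorov's theorem a factorization $K_S(z,\zeta) = H(z) H(\zeta)^*$ with $H \colon \B^d \to \cL(\cX,\cY)$, taking $\cX$ minimal so that $\{H(\zeta)^* y : \zeta \in \B^d,\, y \in \cY\}$ is dense. Clearing the denominator and using $Z_\cX(z) Z_\cX(\zeta)^* = \langle z,\zeta\rangle I_\cX$ rewrites the defining identity as an equality of Gram matrices of the two families
\begin{equation*}
\bbm{H(\zeta)^* y \\ S(\zeta)^* y} \in \bbm{\cX \\ \cU}, \qquad \bbm{Z_\cX(\zeta)^* H(\zeta)^* y \\ y} \in \bbm{\cX^d \\ \cY},
\end{equation*}
so the map $V_0$ between them is a well-defined isometry between closed subspaces of $\cX \oplus \cU$ and $\cX^d \oplus \cY$. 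Extending $V_0$ to a unitary $\bU = \sbm{A & B \\ C & D}$ and reading off the two block rows of the displayed identity at the top, we get $(I_\cX - A^* Z_\cX(\zeta)^*) H(\zeta)^* = C^*$ and $B^* Z_\cX(\zeta)^* H(\zeta)^* + D^* = S(\zeta)^*$. Inverting the first (legitimate since $\|A^* Z_\cX(\zeta)^*\| \le \|\zeta\| < 1$) and substituting into the second, then conjugating, recovers \eqref{1.12}.

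The main obstacle is the unitary-extension step. The lurking-isometry construction only produces a partial isometry between closed subspaces of $\cX \oplus \cU$ and $\cX^d \oplus \cY$ whose orthogonal complements need not have compatible dimensions, so one cannot in general extend $V_0$ directly to a unitary of the precise form required. The standard remedy is to inflate $\cX$ by suitable infinite-dimensional summands with $H$ extended by zero on them, preserving all the Gram identities while opening up room on both sides to complete $V_0$ to a unitary via a Julia-type construction.
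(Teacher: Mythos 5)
The paper does not actually prove Theorem~\ref{T:1.1}: it is stated as a recollection and attributed to \cite{BTV}, so there is no in-paper proof to compare against. Your argument is correct and is in fact the standard proof via the lurking-isometry/Kolmogorov-decomposition method used in that reference and its descendants. Both directions check out: the converse computation correctly produces $I_\cY - S(z)S(\zeta)^* = (1-\langle z,\zeta\rangle)H(z)H(\zeta)^* + N(z,\zeta)$ with $N$ a positive kernel coming from $I-\bU\bU^*\ge 0$, and dividing by $1-\langle z,\zeta\rangle$ (Schur product with the positive Drury--Arveson kernel) gives positivity of $K_S$. In the forward direction you correctly identify the only delicate step, namely extending the partial isometry $V_0$ (which acts $\cD_{V_0}\subset\cX^d\oplus\cY \to \cR_{V_0}\subset\cX\oplus\cU$) to a unitary $\bU^*$ of the required block form: the defect spaces of a minimal Kolmogorov factorization need not have matching Hilbert dimension, but replacing $\cX$ by $\cX\oplus\cE$ with $\cE$ infinite-dimensional of sufficiently large dimension and extending $H$ by zero leaves the Gram identities and hence $V_0$ untouched while enlarging the domain defect to have dimension $\dim\cE^d=\dim\cE$ and the range defect to dimension $\dim\cE$, so they become unitarily equivalent and $V_0$ admits a unitary extension. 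One small slip of wording: you write ``extending $V_0$ to a unitary $\bU$,'' whereas the displayed identity identifies $V_0$ with a restriction of $\bU^*$, so it is $\bU^*$ being extended and $\bU$ is then its adjoint; this does not affect the argument. You could also note that the phrase ``Julia-type construction'' is not quite the right name here --- once the defect dimensions match, one simply picks any unitary between them and takes the direct sum with $V_0$; the Julia colligation is the dilation construction for a contraction, not for a partial isometry with equal defects.
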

As in the univariate case, $S$ of the form \eqref{1.12} will be 
referred to as the {\em characteristic function} of the colligation 
\eqref{1.11}.
The main goal of the present paper is to establish the analog of 
Theorem \ref{T:sum} for the
present multivariable setting, that is to obtain coisometric,
isometric 
and unitary functional-model realizations of a given $S\in\cS_d(\cU,\cY)$ 
in a certain canonical way and to show that these types of
realizations are 
unique up to unitary equivalence under suitable minimality
conditions.  As an application of our functional model spaces for 
this multivariable ball setting, we show how the model theory of 
Bhattacharyya-Eschmeier-Sarkar \cite{BES, BES2} for commutative 
row-contractive operator $d$-tuples can be extended beyond the 
completely noncoisometric case, and we relate these results with 
those of the first author and Vinnikov \cite{Cuntz-scat} established 
for general (possibly noncommutative) completely nonunitary 
row-contractive operator $d$-tuples.

\smallskip

We now introduce the minimality conditions which will play a key role 
in the sequel.
We denote by ${\mathcal I}_{i}: \, \cX \to \cX^{d}$ the  inclusion 
map of the space $\cX$ into the $i$-th slot in the direct-sum space $\cX^{d} = 
\bigoplus_{k=1}^{d} \cX$; the adjoint then is the orthogonal 
projection of $\cX^{d}$ down to the $i$-th coordinate:
\begin{equation}  \label{1.13}
{\mathcal I}_{i} \colon  x_i \mapsto \begin{bmatrix} 0 \\ \vdots \\
x_i \\ \vdots \\  0 \end{bmatrix}\quad \text{and}\quad
{\mathcal I}_{i}^{*} \colon \begin{bmatrix}x_1 \\ \vdots \\ x_i \\ \vdots
\\ x_d \end{bmatrix} \mapsto  x_i.
\end{equation}
With a structured colligation \eqref{1.11}
we associate the observability subspace $\cH^{\mathcal
O}_{C,A}$ and the controllability subspace $\cH^{\mathcal C}_{A,B}$
as 
follows:
\begin{align}
\cH^{\mathcal
O}_{C,A}&:=\bigvee\left\{(I_{\cX}-A^*Z_{\cX}(z)^*)^{-1}C^*y: 
\; z\in \B^d, \; y\in\cY\right\},\label{1.14}\\
\cH^{\mathcal C}_{A,B}&:=\bigvee_{j=1}^d\left\{{\mathcal I}_{j}^{*}
(I_{\cX^d}-AZ_{\cX}(z))^{-1}Bu: \; z\in \B^d, \; u\in\cU\right\},
\label{1.15}
\end{align}
where $Z_{\cX}$ and ${\mathcal I}_{j}^{*}$ are given in \eqref{1.12a} and 
\eqref{1.13}. Observe, that in case $d=1$ these definitions are
equivalent to 
those in \eqref{1.5}. Similarity becomes more transparent if one
writes 
definitions \eqref{1.14}, \eqref{1.15} in terms of powers of the 
state space operators $A_1,\ldots,A_d$; however, at this point we try
to avoid power 
notation which requires some more explanations and notation in case
the state 
space operators do not commute. With the spaces \eqref{1.14},
\eqref{1.15} in 
hand, the multivariable extension of Definition \ref{D:1.2} is now 
immediate.
\begin{definition}
The structured colligation $\bU=\sbm{A & B \\ C & D}\colon
\sbm{\cX \\ \cU}\to \sbm{\cX^d \\ \cY}$ as in \eqref{1.11} is called
{\em 
observable}, {\em controllable} or {\em closely connected} if
respectively,
$$
\cH^{\mathcal O}_{C,A}=\cX,\quad \cH^{\mathcal
C}_{A,B}=\cX\quad\mbox{or}\quad \cH^{\mathcal
O}_{C,A}\bigvee\cH^{\mathcal
C}_{A,B}=\cX.
$$
Furthermore, $\bU$ is called {\em unitarily equivalent} to a
colligation
\begin{equation}
\widetilde {\mathbf U}
= \begin{bmatrix}\tA& \tB \\ \tC & D\end{bmatrix}
\colon\; \begin{bmatrix}\widetilde{\cX} \\  \cU\end{bmatrix}\to
\begin{bmatrix}\widetilde{\cX}^d\\
\cY\end{bmatrix}
\label{1.16}
\end{equation}
if there exists a unitary operator $U\colon \cX \to\widetilde{\cX}$
such that
\begin{equation}
(\oplus_{i=1}^{d} U)A=\tA U,\quad (\oplus_{i=1}^{d} 
U)B=\tB\quad\mbox{and}\quad C=\tC U.
\label{1.17}
\end{equation}
\label{D:1.4}
\end{definition}
It is readily seen that equalities \eqref{1.17} is what we need to
guarantee 
(as in the univariate case) that the  characteristic  functions of
${\bf U}$ 
and of $\widetilde {\mathbf U}$ are equal.

\smallskip

As was pointed out on many occasions, a more useful analog of 
coisometric (isometric, unitary)  realizations appearing in the
classical
univariate case is not that the whole connecting operator ${\bf U}$
be coisometric (isometric, unitary), but rather that ${\bf U}$ and/or
${\bf
U}^*$ be contractive and isometric on certain canonical subspaces
closely related to the subspaces \eqref{1.14} and \eqref{1.15}. 
\begin{definition}
A contractive colligation  $\bU$ of the form \eqref{1.11} is called
\begin{enumerate}
\item {\em weakly isometric} if $\bU$ is isometric on the   
subspace $\widetilde{\mathcal D}_{A,B}\oplus\cU$ where
$$
\widetilde{\mathcal D}_{A,B}:=
\bigvee_{\zeta\in\B^d, \, u\in\cU}Z_{\cX}(\zeta)(I -
AZ_{\cX}(\zeta))^{-1}Bu\subset \cX;
$$
\item {\em weakly coisometric} if the
adjoint $\bU^*: \, \cX^d\oplus\cY\to \cX\oplus\cU$ is
isometric on the subspace ${\mathcal D}_{C,A}\oplus\cY$ where
\begin{equation}
\cD_{C,A}:= \bigvee_{\zeta \in \B^d,y\in\cY}Z_{\cX}(\zeta)^{*} (I -
A^{*}
Z_{\cX}(\zeta)^{*})^{-1} C^{*} y\subset\cX^{d};
\label{1.18}
\end{equation}
\item {\em weakly unitary} if it is weakly isometric and weakly
coisometric.
\end{enumerate}
\label{D:1.5}
\end{definition}
We remark that the above weak notions do not appear in the
single-variable 
case for a simple reason that if the pair $(C,A)$ is observable than 
$\cD_{C,A}=\cX$ so that a weakly coisometric colligation is
automatically 
coisometric and similarly, if the pair $(A,B)$ is controllable, then 
$\widetilde{\mathcal D}_{A,B}=\cX$ so that a weakly isometric
colligation is 
automatically isometric.

\smallskip

As was shown in \cite{BBF2a}, a function $S\in\cS_d(\cU,\cY)$ may not
admit
an observable coisometric realization. In contrast, observable 
weakly-coisometric realizations always exist and up to unitary
equivalence,
all these realizations are canonical functional model (\textbf{c.f.m.})
realizations (see Definition \ref{D:2.3} below) with the state space 
equal to the de Branges-Rovnyak space $\cH(K_S)$ with reproducing
kernel 
\eqref{1.10},
with  the output operator $C$ equal to evaluation at zero on
$\cH(K_{S})$,
and with operators $A$ and $B$ whose adjoints are uniquely
determined on the subspace $\cD_{C,A}\subset\cX^{d}$ given in
\eqref{1.18}.
Realizations of this type were studied in \cite{BBF2a}, \cite{BBF2b}, 
\cite{bb} and will be briefly reviewed in Section \ref{S}.  Section 
\ref{T} is a brief sketch of the dual canonical functional model 
(\textbf{d.c.f.m.}) colligations which provide a canonical functional 
model for controllable weakly isometric realizations of $S$ (the 
analog of part (2) of Theorem \ref{T:sum}); this section is kept 
quite short as the proofs of the results can be seen as special cases 
of the more general manipulations carried out in Section \ref{S:unit}.
Section \ref{S:unit} is the core of the paper where the theory of the 
two-component canonical functional model (\textbf{t.c.f.m.}) 
colligations (the analog of part (3) of Theorem \ref{T:sum}) is 
carried out; these form the precise class of canonical models 
which provide weakly unitary  closely connected realizations for the 
contractive Drury-Arveson-space multiplier $S$.  
Section \ref{S:CharF} gives the application to the model theory for 
row-contractive operator $d$-tuples, i.e., 
how these \textbf{t.c.f.m.} colligations 
can be used to extend at least partially the role of the de 
Branges-Rovnyak two-component spaces $\cH(\widehat K_{S})$ as the 
model space for completely nonunitary contractions to the setting of 
commutative row-contractive operator $d$-tuples $\bT =(T_{1}, \dots, 
T_{d})$ where each $T_{k}$ ($k=1, \dots, d$) is an operator on a 
fixed Hilbert space $\cX$ and the block row-matrix $T = \begin{bmatrix} T_{1} & \cdots & 
T_{d} \end{bmatrix}$ is a contraction operator from $\cX^{d}$ to $\cX$.
Here we also give a simple example (specifically, a 
point on the boundary of the unit ball in ${\mathbb C}^{2}$ viewed as 
a 2-tuple of operators on ${\mathbb C}$) for which the completely 
noncoisometric version of the model theory gives no information but 
for which our added invariant gives complete information.  The final 
Section \ref{S:NAD} sketches on the \textbf{t.c.f.m.}~approach to 
operator-model theory leads 
to more definitive results for unitary classification of not 
necessarily commutative row-contractive operator-tuples; here we draw on 
results from  \cite{Cuntz-scat} and \cite{BV-formal}.

\smallskip

We close the Introduction with a short discussion of the {\em Gleason 
problem} to give the reader some orientation for the multivariable 
formalisms to follow.   The reader will note that the {\em 
difference-quotient transformation} 
$$
 f(z) \mapsto [f(z) - f(0)]/z
$$
(where $f$ is a function which is holomorphic at $0$)
plays a key role in the definition of the model operators $A,B,C^{*}$ 
in Theorem \ref{T:sum}.  For some time now it has been recognized 
that the multivariable analog of the difference-quotient 
transformation is any solution of the so-called {\em Gleason problem} 
for a space of holomorphic functions ${\mathcal H}$ (see 
\cite{gleason, AD1, AD2, AKap}).  Given a space ${\mathcal H}$ of 
holomorphic functions $h$ which are holomorphic in a neighborhood of 
the origin in $d$-dimensional complex Euclidean space ${\mathbb 
C}^{d}$, we say that the operators $R_{1}, \dots, R_{d}$ mapping 
${\mathcal H}$ into itself {\em solve the Gleason problem for 
${\mathcal H}$} if every function $h \in {\mathcal H}$ has a 
decomposition (not necessarily unique) of the form 
$$
h(z) = h(0) + \sum_{k=1}^{d} z_{k} [R_{k}h](z).
$$
We shall see that more structured variations on this idea appear in the 
definition of the model operators for the various canonical 
functional-model spaces over the ball ${\mathbb B}^{d}$ to appear in the sequel. 

\section{Weakly coisometric canonical realizations}
\label{S}

For any $S\in\cS_d(\cU,\cY)$, the associated kernel $K_S$ \eqref{1.10}
is positive on $\B^d\times\B^d$ so we can associate with $S$ the 
de Branges-Rovnyak reproducing kernel Hilbert space $\cH(K_S)$.
In parallel to the  univariate case, $\cH(K_S)$ is the
state space of certain canonical functional-model realization for $S$.
\begin{definition}
We say that the contractive operator-block matrix
\begin{equation}
\bU = \begin{bmatrix} A & B \\ C & D\end{bmatrix}\colon \;
\begin{bmatrix}\cH(K_S)  \\ \cU\end{bmatrix}\to
\begin{bmatrix}\cH(K_S)^{d}  \\ \cY\end{bmatrix}
\label{2.8}
\end{equation}
is a {\em canonical functional-model
({\rm abbreviated to $\cfm$ in what follows}) colligation} for the
given 
function $S\in{\mathcal {S}}_d(\cU, \cY)$ if
\begin{enumerate}
\item The operator $A = \left[ \begin{smallmatrix} A_{1} \\ \vdots \\ 
A_{d} \end{smallmatrix} \right]$ solves the Gleason problem for
$\cH(K_S)$, i.e., 
\begin{equation}
f(z)-f(0)=\sum_{j=1}^d z_j (A_jf)(z)\quad\mbox{for all}\quad
f\in\cH(K_S).
\label{2.6}
\end{equation}
\item The operator $B = \left[ \begin{smallmatrix} B_{1} \\ \vdots \\ 
B_{d} \end{smallmatrix} \right]$ solves the Gleason problem for $S$:
\begin{equation}
 S(z) u - S(0) u = \sum_{j=1}^{d}z_j (B_ju)(z)\quad\mbox{for
all}\quad u \in \cU.
\label{2.7}
\end{equation}
\item The operators $C: \, \cH(K_S)\to \cY$ and $D: \, \cU\to
\cY$ are given by
\begin{equation}
C \colon f\mapsto f(0), \quad  D \colon u \mapsto S(0) u.
\label{2.7a}
\end{equation}
\end{enumerate}
\label{D:2.3}   
\end{definition}
We next rearrange equality \eqref{1.10} as follows
\begin{equation}
\sum_{j=1}^{d}z_j\bar{\zeta}_jK_S(z,\zeta)+I_{\cY}=
K_S(z,\zeta)+S(z)S(\zeta)^*
\label{2.4}   
\end{equation}
and write \eqref{2.4}  in the inner product form as
\begin{align*}
&\left\langle \left[\begin{array}{c}
{\bQ}(\zeta)^*\otimes K_S(\cdot,\zeta)y \\ y\end{array}\right], \;
\left[\begin{array}{c} {\bQ}(z)^*\otimes K_S(\cdot,z)y' \\
y'\end{array}\right]\right\rangle_{\cH(K_S)^d\oplus\cY}
\notag\\
&\qquad=\left\langle \left[\begin{array}{c}
K_S(\cdot,\zeta)y \\ S(\zeta)^*y\end{array}\right], \;
\left[\begin{array}{c}K_S(\cdot,z)y' \\
S(z)^*y'\end{array}\right]\right\rangle_{\cH(K_S)\oplus\cU}.
\end{align*}
It now follows that the map
\begin{equation}
V\colon
\;\left[\begin{array}{c}Z_{\cH(K_{S})}(\zeta)^*K_S(\cdot,\zeta)y
\\ y\end{array}\right] \to
\left[\begin{array}{c}K_S(\cdot,\zeta)y
 \\ S(\zeta)^*y\end{array}\right]
\label{2.5}
\end{equation}
extends by linearity and continuity to an isometry with initial space
$$
{\mathcal D}_V={\displaystyle\bigvee_{\zeta\in\B^d, \, y\in 
\cY}\begin{bmatrix}Z_{\cH(K_{S})}(\zeta)^*K_S(\cdot,\zeta)y\\ 
y\end{bmatrix}}.
$$ 
The following result can be found in \cite{bb}.
\begin{theorem}
Given a function $S\in{\mathcal {S}}_d(\cU, \cY)$, let
$V$ be the isometric operator defined in \eqref{2.5}. Then
\begin{enumerate}
\item  A block-operator matrix $\bU$ of the form \eqref{2.8}  
is a $\cfm$ colligation for $S$ if   
and only if $\bU^*$  is a contractive extension of $V$ to all of
$\cH(K_S)^d\oplus\cY$, i.e.,
\begin{equation}
\bU^*\vert_{{\mathcal D}_V}=V\quad\mbox{and}\quad \|\bU^*\|\le 1.
\label{2.9}
\end{equation}
In particular, a $\cfm$ colligation for $S$ exists.
\item Every $\cfm$ colligation $\bU$ for $S$
is weakly coisometric and observable and
furthermore, $S(z) = D+C(I-Z_{\cH(K_S)}(z)A)^{-1}Z_{\cH(K_S)}(z) B$.
\item Any observable weakly coisometric colligation $\widetilde{\bU}$
of the form \eqref{1.16} with the  characteristic  function equal
$S$ is unitarily equivalent to some $\cfm$ colligation $\bU$ for $S$.
\end{enumerate}
\label{T:2.4}
\end{theorem}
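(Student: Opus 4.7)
The plan is to address parts (1)--(3) in sequence, with part (3) being the most technical.

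For part (1), I would unpack the equivalence $\bU^*|_{\cD_V}=V$ by evaluating the block operator $\bU^*=\sbm{A^* & C^* \\ B^* & D^*}$ on a spanning set of $\cD_V$. Two natural families suffice: the vectors $\sbm{0 \\ y}=V\sbm{0 \\ y}$ obtained by taking $\zeta=0$, and the pure-first-coordinate vectors $\sbm{Z_{\cH(K_S)}(\zeta)^*K_S(\cdot,\zeta)y \\ 0}$ obtained by subtraction. Matching $\bU^*\sbm{0 \\ y}=\sbm{C^*y \\ D^*y}$ against $V\sbm{0 \\ y}=\sbm{K_S(\cdot,0)y \\ S(0)^*y}$ forces $C$ to be evaluation at the origin and $D=S(0)$, while matching on the second family yields
\begin{equation*}
A^*Z_{\cH(K_S)}(\zeta)^*K_S(\cdot,\zeta)y=K_S(\cdot,\zeta)y-K_S(\cdot,0)y
\end{equation*}
together with the analogous identity for $B^*$. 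Taking inner products with $f\in\cH(K_S)$ and $u\in\cU$ converts these into the Gleason conditions \eqref{2.6} and \eqref{2.7}. For existence, identity \eqref{2.4} makes $V$ isometric, and any isometry extends to a contraction on the whole space (for instance by zero on $\cD_V^\perp$).

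For part (2), I would first identify $\cD_V$ with $\cD_{C,A}\oplus\cY$: the $\zeta=0$ contribution yields $\{0\}\oplus\cY$, while the displayed identity from part (1) rearranges to $(I-A^*Z_{\cH(K_S)}(\zeta)^*)K_S(\cdot,\zeta)y=C^*y$, so $(I-A^*Z_{\cH(K_S)}(\zeta)^*)^{-1}C^*y=K_S(\cdot,\zeta)y$, matching the first coordinate of $\cD_V$ with $\cD_{C,A}$. Thus $\bU^*|_{\cD_V}=V$ isometric is exactly weak coisometry. Observability is then automatic, since $\cH^{\mathcal O}_{C,A}=\bigvee\{K_S(\cdot,\zeta)y\}=\cH(K_S)$. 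For the transfer formula, rearranging the Gleason condition gives point evaluation $e_z=C(I-Z_{\cH(K_S)}(z)A)^{-1}$; applying this to $Z_{\cH(K_S)}(z)Bu\in\cH(K_S)$ and using $(Z_{\cH(K_S)}(z)Bu)(z)=\sum_{j}z_j(B_ju)(z)=S(z)u-S(0)u$ produces the claimed realization of $S$.

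For part (3), starting from an observable weakly coisometric $\widetilde{\bU}$ with characteristic function $S$, the central step is the de Branges-Rovnyak factorization
\begin{equation*}
K_S(z,\zeta)=\widetilde C(I-Z_{\widetilde{\cX}}(z)\widetilde A)^{-1}(I-\widetilde A^*Z_{\widetilde{\cX}}(\zeta)^*)^{-1}\widetilde C^*.
\end{equation*}
This is extracted by computing Gram inner products of the spanning vectors $\sbm{Z_{\widetilde{\cX}}(\zeta)^*(I-\widetilde A^*Z_{\widetilde{\cX}}(\zeta)^*)^{-1}\widetilde C^*y \\ y}$ of $\cD_{\widetilde C,\widetilde A}\oplus\cY$ before and after applying $\widetilde{\bU}^*$ (where isometry holds), with the cross-terms identified via the transfer formula for $S$. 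With the factorization in hand, define $U\colon\widetilde{\cX}\to\cH(K_S)$ by $(Ux)(z)=\widetilde C(I-Z_{\widetilde{\cX}}(z)\widetilde A)^{-1}x$; the factorization shows $U^*K_S(\cdot,\zeta)y=(I-\widetilde A^*Z_{\widetilde{\cX}}(\zeta)^*)^{-1}\widetilde C^*y$ and that $U$ is a coisometry onto $\cH(K_S)$, while observability of $\widetilde{\bU}$ forces $U$ to be injective, hence unitary. Transporting $\widetilde{\bU}$ through $U$ produces a colligation $\bU$ on $\cH(K_S)$, and each of the c.f.m.\ conditions \eqref{2.6}--\eqref{2.7a} is checked by direct manipulation: evaluation of $(Ux)(z)$ at $z=0$ yields $Cf=f(0)$; the resolvent decomposition $\widetilde C(I-Z_{\widetilde{\cX}}(z)\widetilde A)^{-1}x=\widetilde Cx+\widetilde C(I-Z_{\widetilde{\cX}}(z)\widetilde A)^{-1}Z_{\widetilde{\cX}}(z)\widetilde Ax$ yields the Gleason identity for $A$; and the transfer formula for $\widetilde{\bU}$ yields the Gleason identity for $B$.

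The main obstacle is the kernel factorization in part (3): in the one-variable setting $\cD_{\widetilde C,\widetilde A}$ is automatically all of $\widetilde{\cX}$ when the output pair is observable, but here weak coisometry only provides isometricity on the (possibly proper) subspace $\cD_{\widetilde C,\widetilde A}\oplus\cY$, so the factorization must be squeezed out of partial Gram identities together with the transfer formula for $S$. Once it is established, the remaining steps are routine reproducing-kernel manipulations.
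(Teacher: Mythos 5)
The paper itself does not reproduce a proof of this theorem (it is cited from reference~[bb], Ball--Bolotnikov, {\em Canonical de Branges-Rovnyak model transfer-function realization for multivariable Schur-class functions}); the analogous argument for the two-component setting is however fully written out in Section~\ref{S:unit} (Propositions~\ref{P:4.2}--\ref{P:4.4}, Lemma~\ref{L:1}, and Theorem~\ref{T:4.7}). Your proposal is correct and is essentially the one-component specialization of that route: in part~(1) you pair the spanning vectors of ${\mathcal D}_V$ against arbitrary $f\in\cH(K_S)$ and $u\in\cU$ to translate $\bU^*\vert_{{\mathcal D}_V}=V$ into the Gleason conditions for $A$ and $B$ together with the formulas for $C,D$ (the analogue of Proposition~\ref{P:4.2} and Lemma~\ref{L:1}); in part~(2) the identification $(I-A^*Z(\zeta)^*)^{-1}C^*y=K_S(\cdot,\zeta)y$ simultaneously yields $\cD_V=\cD_{C,A}\oplus\cY$, observability, and the transfer formula, paralleling Proposition~\ref{P:4.4}; and in part~(3) you derive the key factorization $K_S(z,\zeta)=\tC(I-Z(z)\tA)^{-1}(I-\tA^*Z(\zeta)^*)^{-1}\tC^*$ directly from weak coisometry, use it to build a coisometry $U$ onto $\cH(K_S)$, and invoke observability to upgrade $U$ to a unitary, exactly the Steps~1--3 of Theorem~\ref{T:4.7}. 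You also correctly observe the asymmetry with the two-component case: since Definition~\ref{D:2.3} only pins $B$ to the Gleason problem for $S$ rather than fully prescribing $B^*$, there is no analogue of the condition $B^*\vert_{\cD^\perp}=0$ appearing in Lemma~\ref{L:1}, so the characterization in part~(1) is simply $\bU^*$ contractive and extending $V$.
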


\begin{remark}
{\rm Since $Z_{\cH(K_{S})}(0)=0$, the space ${\mathcal D}_V$ contains
all 
vectors of the form $\sbm{0 \\ y}$ and therefore it splits into the
direct 
sum ${\mathcal D}_V={\mathcal D}\oplus\cY$ where 
$$
{\mathcal D}=\bigvee_{\zeta\in\B^d, \, y\in
\cY}Z_\cY(\zeta)^*K_S(\cdot,\zeta)y\subset \cH(K_S)^d.
$$
It is readily checked that the orthogonal complement ${\mathcal
D}^\perp
=\cH(K_S)^d\ominus {\mathcal D}$ of ${\mathcal D}$ is given by
\begin{equation}
{\mathcal D}^\perp=\left\{h\in\cH(K_S)^d: \; Z_\cY(z)h(z)\equiv
0\right\}.
\label{2.9a}
\end{equation}
We now see from \eqref{2.9} that nonuniqueness of $\cfm$ colligations
for $S$ is achieved by different choices of $A^*\vert_{{\mathcal 
D}^\perp}$ and of $B^*\vert_{{\mathcal D}^\perp}$.}
\label{R:2.3}
\end{remark}

\begin{remark}
{\rm Observe also that in the univariate case $d=1$, the operators
$A$ and 
$B$ are uniquely recovered from \eqref{2.6}, \eqref{2.7} and one then 
arrives at the operators $A$ and $B$ exactly as in 
\eqref{1.8}. Also, the space ${\mathcal D}$ equals $\cH(K_S)$
so that $\bU^*=V$ and now it is seen that for $d=1$ Theorem 
\ref{T:2.4} collapses to part (1) in Theorem \ref{T:sum}.}
\label{R:2.4}
\end{remark} 

Definition \ref{D:2.3} does not require $\bU$ to be a realization for
$S$: representation \eqref{1.12} is automatic once the operators
$A$, $B$, $C$ and $D$ are of the required form. Theorem \ref{T:2.5a} 
below(see Theorem 2.10 in \cite{bb} for the proof) characterizes 
which operators $A$ and which operators $B$ can arise in a 
\textbf{c.f.m.}~colligation for $S$. Let us say that $A \colon
\cH(K_S)\to\cH(K_S)^d$ is a {\em contractive solution} of the Gleason
problem for $\cH(K_S)$ if in addition to \eqref{2.6}, the inequality
$$
\sum_{k=1}^{d} \|A_kf\|^{2}_{\cH(K_{S})}
\le\|f\|^{2}_{\cH(K_{S})} - \| f(0)\|^{2}_{\cY}
$$
holds for every $f\in\cH(K_S)$. An equivalent operator form of this 
inequality is $A^*A+C^*C\le I$
where the operator $C \colon \cH(K_S)\to \cY$ is given in
\eqref{2.7a}. It therefore follows from Definition \ref{D:2.3} that
for every {\bf c.f.m.}~colligation $\bU=\left[
\begin{smallmatrix} A & B \\ C & D \end{smallmatrix} \right]$ for
$S$, the
operator $A$ is a contractive solution of the Gleason problem
\eqref{2.6}.
\begin{theorem}
Let $S\in\cS_d(\cU,\cY)$ be given and let us assume that $C$, $D$ are
given by formulas \eqref{2.7a}. Then
\begin{enumerate}
\item For every contractive solution $A$ of the Gleason problem
\eqref{2.6}, there exists an operator $B: \; \cU\to\cH(K_S)$ such
that 
$\bU =\left[ \begin{smallmatrix} A & B \\ C & D \end{smallmatrix}
\right]$
is contractive and $S$ is realized as in \eqref{1.12}.
\item Every such $B$ solves the $\cH(K_S)$-Gleason problem \eqref{2.7}
so that $\bU$ is a {\bf c.f.m.} colligation.
\end{enumerate}
\label{T:2.5a}   
\end{theorem}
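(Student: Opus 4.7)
The plan is to establish part (2) first by a direct identity and then, for part (1), to produce the required $B$ through Theorem~\ref{T:2.4}(1), which characterizes $\cfm$ colligations for $S$ as adjoints of contractive extensions of the isometry $V$ from \eqref{2.5}.

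For part (2), given the realization \eqref{1.12}, fix $u\in\cU$ and set $h_z:=(I-Z_{\cH(K_S)}(z)A)^{-1}Z_{\cH(K_S)}(z)Bu\in\cH(K_S)$. The defining relation $(I-Z_{\cH(K_S)}(z)A)h_z=Z_{\cH(K_S)}(z)Bu$ evaluated pointwise at $w=z$ yields
\[
h_z(z)-\sum_{j=1}^{d} z_j (A_jh_z)(z) = \sum_{j=1}^{d} z_j (B_ju)(z).
\]
The Gleason identity \eqref{2.6} applied to $h_z$ at the point $z$ collapses the left-hand side to $h_z(0)$. Since $h_z(0)=Ch_z=S(z)u-S(0)u$ by the realization, we obtain $\sum_{j=1}^{d}z_j (B_ju)(z)=S(z)u-S(0)u$, which is \eqref{2.7}. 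Together with the prescribed $A$, $C$, $D$ this shows $\bU$ is a $\cfm$ colligation in the sense of Definition~\ref{D:2.3}.

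For part (1), I would construct $\bU^*$ as a contractive extension of $V$ with prescribed top row $[A^*\;C^*]$. A reproducing-kernel computation using \eqref{2.6} gives
\[
A^*\bigl(Z_{\cH(K_S)}(\zeta)^* K_S(\cdot,\zeta)y\bigr) = K_S(\cdot,\zeta)y - K_S(\cdot,0)y,
\]
so that $[A^*\;C^*]\sbm{Z_{\cH(K_S)}(\zeta)^*K_S(\cdot,\zeta)y\\ y}=K_S(\cdot,\zeta)y$ already matches the top component of $V$ on $\cD_V$. The isometry property of $V$ then forces the prescription $B^*(Z_{\cH(K_S)}(\zeta)^*K_S(\cdot,\zeta)y):=S(\zeta)^*y-S(0)^*y$ on $\cD$ to be consistent. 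Once $\bU^*$ is produced as a contractive extension of $V$, Theorem~\ref{T:2.4}(1) makes $\bU$ a $\cfm$ colligation and Theorem~\ref{T:2.4}(2) delivers the realization \eqref{1.12}.

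The main technical obstacle is extending $B^*$ to $\cD^\perp$ consistently with the prescribed $A^*|_{\cD^\perp}$ while keeping $\bU^*$ contractive. This reduces to a Parrott-type contractive-completion problem: the column $\sbm{A\\C}$ is contractive by the contractive Gleason hypothesis ($A^*A+C^*C\le I$), and the row $[C\;D]$ is a coisometry since $CC^*+DD^*=K_S(0,0)+S(0)S(0)^*=I_\cY$ (using $C^*y=K_S(\cdot,0)y$ and $D=S(0)$). Remark~\ref{R:2.3} identifies $A^*|_{\cD^\perp}$ and $B^*|_{\cD^\perp}$ as precisely the free parameters governing nonuniqueness, so for any contractive Gleason solution $A$ a compatible $B^*|_{\cD^\perp}$ can be selected by a standard contractive-lifting argument, closing the existence claim.
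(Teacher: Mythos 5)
Your proof of part~(2) is correct: evaluating the resolvent identity $(I-Z_{\cH(K_S)}(z)A)h_z=Z_{\cH(K_S)}(z)Bu$ at the diagonal point, collapsing the left side via~\eqref{2.6}, and using $Ch_z=S(z)u-S(0)u$ gives~\eqref{2.7} cleanly. (Note the paper merely cites an external source for this theorem, so there is no internal proof to compare against.)

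For part~(1) the strategy is right and the forced identities for $A^*|_{\cD}$, $B^*|_\cD$, $C^*$, $D^*$ are computed correctly, but the final existence step is where the argument is under-specified in a way that matters. The data you list --- $\sbm{A \\ C}$ contractive, $[C\ D]$ coisometric --- set up Parrott's lemma with respect to the colligation block decomposition $\cH(K_S)\oplus\cU \to \cH(K_S)^d\oplus\cY$; that lemma does produce some $B$ making $\bU$ contractive, but there is no reason such a $B$ yields $\bU^*|_{\cD_V}=V$, hence no reason the resulting transfer function equals $S$ (it produces some $\widetilde S\in\cS_d(\cU,\cY)$ with $\widetilde S(0)=S(0)$, nothing more). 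The correct lifting uses a different orthogonal decomposition of the \emph{domain} of $\bU^*$, namely $\cH(K_S)^d\oplus\cY = \cD_V\oplus \cD_V^\perp$. With respect to this partition and the codomain split $\cH(K_S)\oplus\cU$, the block matrix for $\bU^*$ reads $\sbm{V_1 & A^*|_{\cD^\perp}\\ V_2 & ?}$: the first column is the isometry $V=\sbm{V_1\\V_2}$ (hence a contraction), and the top row is $[V_1\ \ A^*|_{\cD^\perp}] = [A^*\ C^*]$ restricted to $\cD_V\oplus\cD_V^\perp$, which is a contraction precisely because $A^*A+C^*C\le I$. Parrott's lemma then fills the lower-right corner with $B^*|_{\cD^\perp}$ making the full matrix contractive, and automatically $\bU^*|_{\cD_V}=V$. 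The coisometry of $[C\ D]$ plays no role in this step (it is equivalent to the isometry of $V$ on vectors $\sbm{0\\y}$ and is already baked in). As written, your invocation of ``a standard contractive-lifting argument'' does not distinguish these two Parrott set-ups, and a reader following the colligation-block hypotheses you list would get stuck, since that route produces contractivity without the realization.
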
   

An object of an independent interest is the class of Schur-class functions
admitting contractive {\em commutative} realizations of the form \eqref{1.12} where
the state space operators $A_1,\ldots,A_d$ commute with each
other. A key role here is played by the backward shift on the Drury-Arveson space
$\cH(k_d)$, the commuting $d$-tuple ${\mathbf  M}^*_{\bz}:=(M^*_{z_1},\ldots,M^*_{z_d})$
consisting of the adjoints (in metric of $\cH(k_d)$) of operators $M_{z_j}$'s of
multiplication by the coordinate functions of $\C^d$. It was shown in \cite{BBF2b} that
any Schur-class function $S$ with associated de Branges-Rovnyak space $\cH(K_S)$
finite-dimensional and not ${\mathbf  M}^*$-invariant does not admit a contractive
commutative realization. The following theorem also can be found in \cite{BBF2b}.

\begin{theorem}  \label{T:comreal}
A Schur-class function $S\in\cS_d(\cU,\cY)$ admits a commutative weakly
coisometric realization if and only if the following conditions hold:
\begin{enumerate}
\item The
associated de Branges-Rovnyak space $\cH(K_S)$ is ${\bf M}_\bz^*$-invariant,
and
\item the inequality
\begin{equation}
\sum_{j=1}^d\|M_{z_j}^*f\|^2_{\cH(K_S)}\le
\|f\|^2_{\cH(K_S)}-\|f(0)\|^2_{\cY}\quad\mbox{holds for all}\quad f \in\cH(K_{S}).
\label{3.1}
\end{equation}
\end{enumerate}
Furthermore, if conditions (1) and (2) are satisfied, then there exists a commutative {\bf
c.f.m.} colligation for $S$. Moreover, the state-space operators tuple is equal to
the Drury-Arveson backward shift restricted to $\cH(K_S)$: $A_j=M^*_{z_j}\vert_{\cH(K_S)}$
for
$j=1,\ldots,d$.
\end{theorem}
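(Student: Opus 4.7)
My approach is to deduce both directions from the canonical-functional-model ($\cfm$) machinery of Theorems~\ref{T:2.4} and~\ref{T:2.5a}.

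\textbf{Necessity.} Suppose $S$ admits some commutative weakly coisometric realization $\widetilde{\bU}$. I would first pass to the restriction of $\widetilde{\bU}$ to its observability subspace, which is invariant under each $\tA_j^*$ and $\tC^*$ and hence still yields a commutative, observable, weakly coisometric colligation realizing $S$. Theorem~\ref{T:2.4}(3) then produces a unitary equivalence \eqref{1.17} to a $\cfm$ colligation $\bU=\sbm{A & B\\ C & D}$ for $S$; commutativity of $\tA_1,\ldots,\tA_d$ transports to commutativity of $A_1,\ldots,A_d$. The decisive step is to show that a \emph{commuting} solution of the Gleason problem \eqref{2.6} must be the restriction of the Drury--Arveson backward shift. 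I would do this by taking the Taylor coefficient of $z^\bm$ in $f(z)-f(0)=\sum_j z_j(A_jf)(z)$ to get $f_\bm=\sum_{j:\,m_j\ge 1}(A_jf)_{\bm-e_j}$, and then inducting on $|\bm|$, using commutativity (which makes $A^\bm:=A_1^{m_1}\cdots A_d^{m_d}$ unambiguous) and the multinomial identity $\sum_{j:\,m_j\ge 1}\binom{|\bm|-1}{\bm-e_j}=\binom{|\bm|}{\bm}$ to deduce
\begin{equation*}
f_\bm=\binom{|\bm|}{\bm}(A^\bm f)(0)\qquad (f\in\cH(K_S)).
\end{equation*}
Applying this formula once more with $A_jf$ in place of $f$ and comparing with the explicit coefficients $(M_{z_j}^*f)_\bm=\frac{m_j+1}{|\bm|+1}f_{\bm+e_j}$ forced by the Drury--Arveson norm \eqref{char} shows $A_jf=M_{z_j}^*f$ coefficient-by-coefficient, hence as holomorphic functions on $\B^d$. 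This proves $\cH(K_S)$ is $\mathbf{M}_\bz^*$-invariant, i.e.\ condition (1); and the contractivity of $\bU$ translates to $A^*A+C^*C\le I$, which is condition (2).

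\textbf{Sufficiency.} Conversely, assume (1) and (2). I would define $A_j:=M_{z_j}^*\vert_{\cH(K_S)}$ (bounded on $\cH(K_S)$ by (1)) together with $C$ and $D$ as in \eqref{2.7a}. A direct Taylor-coefficient computation from \eqref{char} shows $\sum_j z_j(M_{z_j}^*f)(z)=f(z)-f(0)$ for every $f\in\cH(k_d)$, so $A$ solves the Gleason problem \eqref{2.6} on $\cH(K_S)$; assumption (2) is exactly the contractivity of the column $\sbm{A\\ C}$. Theorem~\ref{T:2.5a}(1) then produces an operator $B$ for which $\bU=\sbm{A & B\\ C & D}$ is a contractive realization of $S$, and Theorem~\ref{T:2.5a}(2) identifies $\bU$ as a $\cfm$ colligation; Theorem~\ref{T:2.4}(2) guarantees it is weakly coisometric. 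Commutativity of $A_1,\ldots,A_d$ is inherited from commutativity of the multipliers $M_{z_1},\ldots,M_{z_d}$, giving the desired commutative weakly coisometric realization with $A_j=M_{z_j}^*\vert_{\cH(K_S)}$ as claimed.

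\textbf{Main obstacle.} The principal difficulty is the identification $A_j=M_{z_j}^*\vert_{\cH(K_S)}$ in the necessity direction: the Gleason relation by itself leaves substantial freedom in the $A_j$'s, and it is only the interplay of commutativity with the multinomial combinatorics specific to the Drury--Arveson kernel that pins the state-space tuple down to this canonical choice. Once that is established, everything else is a direct appeal to Theorems~\ref{T:2.4} and~\ref{T:2.5a}.
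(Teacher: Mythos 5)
The paper does not actually prove Theorem~\ref{T:comreal} — it merely cites \cite{BBF2b} — so there is no in-text proof to compare against. Assessed on its own, your argument is correct, and the pivotal lemma you isolate (that a \emph{commuting} solution of the Gleason problem \eqref{2.6} on $\cH(K_S)$ is forced to coincide coefficient-by-coefficient with the restricted backward shift) is indeed the engine of the result. Your multinomial induction giving $f_\bm = \binom{|\bm|}{\bm}(A^\bm f)(0)$, followed by the comparison $(A_jf)_\bm = \frac{m_j+1}{|\bm|+1}f_{\bm+e_j} = (M_{z_j}^*f)_\bm$ derived from the Drury--Arveson norm \eqref{char}, is a clean and complete way to establish that lemma. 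The sufficiency direction is a direct assembly from Theorems~\ref{T:2.5a} and~\ref{T:2.4}(2), exactly as you say.

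The one place a careful reader would want more detail is the opening reduction in the necessity direction: you pass from an arbitrary commutative weakly coisometric realization $\widetilde{\bU}$ to its compression to the observability subspace $\cM=\cH^{\mathcal O}_{\tC,\tA}$ and assert in a single clause that the compression is still commutative, observable, weakly coisometric, and realizes the same $S$. All four claims hold — commutativity and the transfer function pass through because $\cM$ is jointly $\tA_j^*$-invariant and contains $\operatorname{Ran}\tC^*$; observability of the compression is immediate; and weak coisometry survives because $\cD_{\tC,\tA}\subset\cM^d$ and on $\cM^d\oplus\cY$ the compressed adjoint colligation acts identically to $\widetilde{\bU}^*$, so the isometry property restricts. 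But none of this is spelled out, and it is the only step where an argument, rather than an assertion, is missing. Supplying it would make the proof airtight.
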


\section{Weakly isometric realizations} 
\label{T}

In the univariate case, the state space of the functional-model
isometric 
realization for a Schur-class 
function $S$ can be taken to be equal to the reproducing kernel
Hilbert 
space
$\cH(\widetilde{K}_S)$ with reproducing kernel 
$\widetilde{K}_S(z,\zeta)$ as in \eqref{1.1}.
A natural multivariable counterpart of this kernel would be the kernel
$$
\widetilde{K}_S(z,\zeta)=\frac{I_{\cU}-S(z)^*S(\zeta)}{1-\langle
\zeta, 
z\rangle}.
$$
However, if $S\in\cS_d(\cU,\cY)$ for $d>1$, this kernel is not 
positive in general.
Instead, we have the following Agler-type decomposition result (see 
\cite[Theorem 2.4]{BTV} for the proof).
\begin{theorem}
A function $S \colon {\mathbb B}^{d} \to \cL(\cU, \cY)$ belongs to 
$\cS_{d}(\cU,\cY)$ if and only if there exists a positive kernel
\begin{equation}
\Phi=\begin{bmatrix}\Phi_{11}&\ldots&
\Phi_{1d} \\ \vdots && \vdots \\ \Phi_{d1}&\ldots&
\Phi_{dd}\end{bmatrix}: \; \B^d\times\B^d\to \cL(\cU^d)
\label{2.10}
\end{equation}
so that for every $z,\zeta\in\B^d$,
\begin{equation}
I_{\cU}-S(z)^{*}S(\zeta) =
\sum_{j=1}^{d}\Phi_{jj}(z,\zeta) -
\sum_{i,\ell=1}^{d} \overline{z}_i\zeta_\ell \Phi_{i\ell}(z,\zeta).
\label{2.11}
\end{equation}  
\label{T:2.5}
\end{theorem}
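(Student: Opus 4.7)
My plan is to use Theorem \ref{T:1.1} as a bridge in both directions, since that result already characterizes $\cS_d(\cU,\cY)$ via the existence of a contractive (equivalently, unitary) realization of the form \eqref{1.12}. The two implications of Theorem \ref{T:2.5} then amount to converting between such a realization and the Agler-type decomposition \eqref{2.11}.

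For necessity, assume $S\in\cS_d(\cU,\cY)$ and invoke Theorem \ref{T:1.1} to obtain a unitary colligation $\bU=\sbm{A & B \\ C & D}\colon\cX\oplus\cU\to\cX^d\oplus\cY$ realizing $S$. Introduce the auxiliary operator-valued functions
\[
G(z):=(I_{\cX^d}-AZ_\cX(z))^{-1}B\colon\cU\to\cX^d,\qquad F(z):=Z_\cX(z)G(z)\colon\cU\to\cX,
\]
and verify the basic identity $\bU\sbm{F(z) \\ I_\cU}=\sbm{G(z) \\ S(z)}$ using $(I_{\cX^d}-AZ_\cX(z))G(z)=B$. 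Applying $\bU^*\bU=I$ to this column pair (and its counterpart at $\zeta$) and polarizing yields $F(z)^*F(\zeta)+I_\cU=G(z)^*G(\zeta)+S(z)^*S(\zeta)$. Writing $G(z)=\sbm{G_1(z) \\ \vdots \\ G_d(z)}$ with $G_j(z)\colon\cU\to\cX$ and defining $\Phi_{i\ell}(z,\zeta):=G_i(z)^*G_\ell(\zeta)$ produces a positive $\cL(\cU^d)$-valued kernel $\Phi=(\Phi_{i\ell})$ (Kolmogorov-factored by construction). A short block expansion of $Z_\cX(z)^*Z_\cX(\zeta)$ then identifies $G(z)^*G(\zeta)=\sum_j\Phi_{jj}(z,\zeta)$ and $F(z)^*F(\zeta)=\sum_{i,\ell}\bar z_i\zeta_\ell\Phi_{i\ell}(z,\zeta)$, establishing \eqref{2.11}.

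For sufficiency, start from the decomposition \eqref{2.11} and Kolmogorov-factor $\Phi(z,\zeta)=H(z)^*H(\zeta)$ with $H(z)=[H_1(z),\ldots,H_d(z)]\colon\cU^d\to\cK$ for some auxiliary Hilbert space $\cK$, so $\Phi_{i\ell}(z,\zeta)=H_i(z)^*H_\ell(\zeta)$. Setting $G(z):=\sum_i z_iH_i(z)$, the identity \eqref{2.11} rewrites as
\[
I_\cU+G(z)^*G(\zeta)=S(z)^*S(\zeta)+\sum_{j=1}^d H_j(z)^*H_j(\zeta),
\]
which is the classical lurking-isometry set-up: the map $V\colon(G(z)u,u)\mapsto\bigl((H_j(z)u)_{j=1}^d,S(z)u\bigr)$ extends by linearity and continuity to a well-defined isometry from a subspace of $\cK\oplus\cU$ into $\cK^d\oplus\cY$. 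Extending $V$ contractively to the full space gives $\bU=\sbm{A & B \\ C & D}$; the relations $AG(z)+B=[H_j(z)]_j$, $CG(z)+D=S(z)$, together with $G(z)=Z_\cK(z)[H_j(z)]_j$, solve to $[H_j(z)]_j=(I_{\cK^d}-AZ_\cK(z))^{-1}B$ and hence $S(z)=D+C(I_\cK-Z_\cK(z)A)^{-1}Z_\cK(z)B$, a contractive realization of the form \eqref{1.12}. Theorem \ref{T:1.1} then delivers $S\in\cS_d(\cU,\cY)$.

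The main technical point is the well-definedness of $V$ in the sufficiency direction: whenever a finite linear combination of domain vectors $(G(z^{(k)})u^{(k)},u^{(k)})$ vanishes, so must the corresponding combination of images, and this is exactly what the polarized Agler identity guarantees. The remaining ingredients---the standard identity $Z_\cK(z)(I_{\cK^d}-AZ_\cK(z))^{-1}=(I_\cK-Z_\cK(z)A)^{-1}Z_\cK(z)$ and the inner-product polarizations---are routine.
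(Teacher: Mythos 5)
Your proof is correct. The paper itself does not prove Theorem~\ref{T:2.5} but cites \cite[Theorem 2.4]{BTV}; your two-sided argument via Theorem~\ref{T:1.1} — extracting the Kolmogorov factors $\Phi_{i\ell}(z,\zeta)=G_i(z)^*G_\ell(\zeta)$ from a unitary realization for necessity, and running the lurking-isometry construction from a Kolmogorov factorization of $\Phi$ for sufficiency — is exactly the standard realization-theoretic route used in that reference.
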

The kernel $\Phi$ in Theorem \ref{T:2.5} is not determined from $S$ 
uniquely. With each such kernel, one can associate weakly-isometric 
functional-model colligations as follows. Given a decomposition 
\eqref{2.11} with a positive kernel $\Phi$, let $\cH(\Phi)$ be the 
reproducing kernel Hilbert space with reproducing kernel $\Phi$.
Clearly, the elements of $\cH(\Phi)$ are $\cU^d$-valued functions
defined 
and conjugate-analytic on $\B^d$. We next rearrange the block 
columns $\Phi_{\bullet k}$ of $\Phi$ to produce the kernel
\begin{equation}
{\mathbb T}(z,\zeta):= \left[\begin{array}{c}
\Phi_{\bullet 1}(z,\zeta) \\ \vdots \\
\Phi_{\bullet d}(z,\zeta)\end{array}\right], \quad\mbox{where}\quad
\Phi_{\bullet k}(z,\zeta)=\left[\begin{array}{c}\Phi_{1k}(z,\zeta) \\ 
\vdots \\ \Phi_{dk}(z,\zeta)\end{array}\right],
\label{2.12}
\end{equation}
and we then introduce the subspace
\begin{equation}
\widetilde{\mathcal D}=\bigvee\left\{\sum_{j=1}^d 
\zeta_j\Phi_{\bullet j}(\cdot,\zeta)u: \; 
\zeta=(\zeta_1,\ldots,\zeta_d)\in\B^d, \,
u\in\cU\right\}\subset\cH(\Phi).
\label{2.13}  
\end{equation}
Decomposition \eqref{2.11} then can be written in the inner product
form 
as
\begin{align*}
&\left\langle \left[\begin{array}{c}
\sum_{j=1}^d \zeta_j\Phi_{\bullet j}(\cdot,\zeta)u \\ 
u\end{array}\right], \;
\left[\begin{array}{c} \sum_{j=1}^d
z_j\Phi_{\bullet j}(\cdot,z)u'\\
u'\end{array}\right]\right\rangle_{\cH(\Phi)\oplus\cU}\notag\\
&\qquad=\left\langle \left[\begin{array}{c}
{\mathbb T}(\cdot,\zeta)u \\ S(\zeta)u\end{array}\right], \;
\left[\begin{array}{c}{\mathbb T}(\cdot,z)u' \\
S(z)u'\end{array}\right]\right\rangle_{\cH(\Phi)^d\oplus\cY}
\end{align*}
so that the linear map $\widetilde{V}$ given by formula 
\begin{equation}
\widetilde{V}
\colon  \;\left[\begin{array}{c}\sum_{j=1}^d \zeta_j\Phi_{\bullet 
j}(\cdot,\zeta)u\\ u\end{array}\right] \to
\left[\begin{array}{c}{\mathbb T}(\cdot,\zeta)u
 \\ S(\zeta)u\end{array}\right]
\label{2.14}
\end{equation}
extends by continuity to define the isometry $\widetilde{V}: \,
{\mathcal D}_{\widetilde{V}}\to {\mathcal R}_{\widetilde{V}}$ where
$$
{\mathcal
D}_{\widetilde{V}}=\widetilde{\mathcal{D}}\oplus\cU\quad\mbox{and}\quad{\mathcal
R}_{\widetilde{V}}=\bigvee_{\zeta\in\B^d, \, u\in \cU}
\left[\begin{array}{c}
{\mathbb T}(\cdot,\zeta)y\\ S(\zeta)u\end{array}\right]
\subset \begin{bmatrix}\cH(\Phi)^d \\ \cY\end{bmatrix}.
$$
\begin{definition}
{\rm Given a function $S \in\cS_d(\cU, \cY)$, we shall say
that the contractive block-operator matrix
\begin{equation}
\widetilde{\bU} = \begin{bmatrix} \tA & \tB \\ \tC &
\tD\end{bmatrix}\colon \;
\begin{bmatrix}\cH(\Phi) \\ \cU\end{bmatrix}\to 
\begin{bmatrix}\cH(\Phi)^d \\ \cY\end{bmatrix}
\label{2.15}
\end{equation}
is a {\em dual canonical functional-model ({\rm abbreviated to $\dcfm$
 in what follows}) colligation} associated with 
decomposition \eqref{2.11} for $S$ if:
\begin{enumerate}
\item The restrictions of operators $A$ and $C$ to the subspace
$\widetilde{\mathcal D}\subset \cH(\Phi)^d$
defined in \eqref{2.13} have the following action on special kernel 
functions:
\begin{align*}
\tA\vert_{\widetilde{\mathcal D}}: & \; 
\sum_{j=1}^d \zeta_j\Phi_{\bullet j}(\cdot,\zeta)u
\to {\mathbb T}(\cdot,\zeta)u-{\mathbb T}(\cdot,0)u,\\
\tC\vert_{\widetilde{\mathcal D}}: \; & 
\sum_{j=1}^d \zeta_j\Phi_{\bullet j}(\cdot,\zeta)u
\to S(\zeta)u-S(0)u.
\end{align*}
\item The operators $\tB\colon \, \cU\to\cH({\mathbb K}_R)^q$
and $\tD\colon\cU\to \cY$ are given by
$$
\tB \colon u\mapsto {\mathbb T}(\cdot,0)u, \qquad \tD \colon u \mapsto
S(0) u.
$$
\end{enumerate}}
\label{D:3.1}
\end{definition}

The following theorem is parallel to Theorem \ref{T:2.4}; note that 
when $d=1$ this theorem amounts to part (2) of Theorem \ref{T:sum}.

\begin{theorem}
Given a function $S\in{\mathcal {S}}_d(\cU, \cY)$ with a 
a fixed decomposition \eqref{2.11}, let
$\widetilde{V}$ be the isometric operator defined in \eqref{2.14}.
Then
\begin{enumerate}
\item  A block-operator matrix $\bU$ of the form \eqref{2.15}
is a $\dcfm$ colligation for $S$ if
and only if $\bU$  is a contractive extension of $\widetilde{V}$ to
all of
$\cH(\Phi)^d\oplus\cY$. In particular, a $\dcfm$ colligation for $S$ 
exists.
\item Every $\dcfm$ colligation $\bU$ for $S$
is weakly isometric and controllable and
furthermore, $S(z) = D+C(I-Z_{\cH(\Phi)}(z)A)^{-1}Z_{\cH(\Phi)}(z) B$.
\item Any controllable weakly isometric colligation $\widetilde{\bU}$ 
\eqref{1.16} with its  characteristic  function equal
$S$ is unitarily equivalent to some $\dcfm$ colligation for $S$
based on the Agler decomposition \eqref{2.11} with 
$\Phi_{i \ell}(z, \zeta)$ given by
$$
\Phi_{i\ell}(z,\zeta) 
=\widetilde{B}^*(I-Z_{\cX}(z)^*\widetilde{A}^*)^{-1}
{\mathcal I}_{i}^*{\mathcal I}_\ell
(I-\widetilde{A}Z_\cX(\zeta))^{-1}\tB.
$$
\end{enumerate}
\label{T:2.7}
\end{theorem}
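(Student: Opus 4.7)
The approach dualizes the proof of Theorem \ref{T:2.4}, exchanging the roles of observability/controllability and input/output. Throughout I write $g_\ell(\zeta)u := {\mathcal I}_\ell^*(I-\tA Z_{\widetilde{\cX}}(\zeta))^{-1}\tB u$ for the $\ell$-th component of the controllability resolvent, and $f(\zeta) := \sum_{j=1}^{d}\zeta_j\Phi_{\bullet j}(\cdot,\zeta)u \in \widetilde{\mathcal D}$ for the generating vector on the domain side of $\widetilde V$.

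For part (1), I would simply unpack Definition \ref{D:3.1}: the four prescribed formulas for $\tA|_{\widetilde{\mathcal D}}$, $\tB$, $\tC|_{\widetilde{\mathcal D}}$, $\tD$ reassemble to precisely the two components of the right-hand side of \eqref{2.14} when the input $\sbm{f(\zeta) \\ u}$ of $\widetilde V$ is split between its $\widetilde{\mathcal D}$-coordinate and $\cU$-coordinate. Hence $\widetilde{\bU}$ is a \dcfm\ colligation if and only if $\widetilde{\bU}|_{\widetilde{\mathcal D}\oplus\cU} = \widetilde V$. Because $\widetilde V$ is an isometry on a closed subspace, a contractive extension to all of $\cH(\Phi)\oplus\cU$ always exists (for instance by extending by zero on the orthogonal complement), giving existence.

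For part (2), I would compute the resolvent directly using the Gleason-type action of $\tA$. With $\tB u = {\mathbb T}(\cdot,0)u$, one checks $(I - Z_{\cH(\Phi)}(\zeta)\tA)f(\zeta) = Z_{\cH(\Phi)}(\zeta)\tB u$; applying $\tC$ and adding $\tD u = S(0)u$ recovers $S(\zeta)u$, which is the realization formula. The standard resolvent flip $Z_{\cH(\Phi)}(\zeta)(I-\tA Z_{\cH(\Phi)}(\zeta))^{-1} = (I - Z_{\cH(\Phi)}(\zeta)\tA)^{-1}Z_{\cH(\Phi)}(\zeta)$ then gives $\widetilde{\mathcal D}_{\tA,\tB} = \widetilde{\mathcal D}$, so weak isometry reduces to isometricity of $\widetilde V$ on $\mathcal{D}_{\widetilde V}$; and the companion identity ${\mathcal I}_j^*(I-\tA Z_{\cH(\Phi)}(\zeta))^{-1}\tB u = \Phi_{\bullet j}(\cdot,\zeta)u$ together with totality of the kernel vectors in $\cH(\Phi)$ gives controllability.

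For part (3), let $\widetilde{\bU}$ be controllable and weakly isometric with state space $\widetilde{\cX}$. Polarization of the weakly-isometric identity at test vectors $h_k := Z_{\widetilde{\cX}}(\zeta_k)(I-\tA Z_{\widetilde{\cX}}(\zeta_k))^{-1}\tB u_k \in \widetilde{\mathcal D}_{\tA,\tB}$, combined with $\tC h_k + \tD u_k = S(\zeta_k)u_k$ and $\tA h_k + \tB u_k = (I-\tA Z_{\widetilde{\cX}}(\zeta_k))^{-1}\tB u_k$, simultaneously produces the Agler decomposition \eqref{2.11} with $\Phi_{i\ell}$ of the stated form and the key inner-product identity
\[
\langle g_\ell(\zeta_1)u_1,\, g_i(\zeta_2)u_2\rangle_{\widetilde{\cX}} = \langle \Phi_{i\ell}(\zeta_2,\zeta_1)u_1,\, u_2\rangle_{\cU} = \langle \Phi_{\bullet\ell}(\cdot,\zeta_1)u_1,\, \Phi_{\bullet i}(\cdot,\zeta_2)u_2\rangle_{\cH(\Phi)}.
\]
I then define $U:\widetilde{\cX}\to\cH(\Phi)$ on the controllability generators by $U g_\ell(\zeta)u := \Phi_{\bullet\ell}(\cdot,\zeta)u$; the displayed identity shows $U$ extends to an isometry, and controllability of $\widetilde{\bU}$ together with totality of the kernel vectors in $\cH(\Phi)$ promotes $U$ to a unitary. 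Finally, conjugating $\widetilde{\bU}$ by $U$ in the sense of \eqref{1.17} produces a colligation $\bU$ on $\cH(\Phi)$ satisfying Definition \ref{D:3.1}: the consistency check uses $Ug_\ell(0)u = \Phi_{\bullet\ell}(\cdot,0)u$ to verify that $B=(\oplus_{i=1}^{d}U)\tB$ has the prescribed form $Bu = {\mathbb T}(\cdot,0)u$, while the analogous evaluations for $A$ and $C$ are forced by the intertwining and the Gleason-type actions. The main technical hurdle is this polarization/bookkeeping step: cleanly extracting the closed-form Agler kernel $\Phi$ from the weakly-isometric identity is what makes everything else—positivity of $\Phi$, unitarity of $U$, and the intertwining identities—follow almost mechanically.
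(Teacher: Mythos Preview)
Your proposal is correct and follows essentially the same route the paper indicates: the paper omits a direct proof and says it ``can be extracted from the proof of Theorem~\ref{T:4.7},'' and your argument is precisely that specialization---collapsing the two-component kernel ${\mathbb K}$ to its lower-right block $\Phi$ and running Steps~1--3 of that proof with the $\cY$-component suppressed. The only cosmetic difference is in part~(3): where the paper (in the Theorem~\ref{T:4.7} template) would write the unitary as $U\colon x\mapsto {\mathbb G}(z)x$ via the Kolmogorov factorization $\Phi(z,\zeta)={\mathbb G}(z){\mathbb G}(\zeta)^*$ with ${\mathbb G}(z)=\operatorname{Col}_{1\le i\le d}\bigl[\tB^*(I-Z_{\widetilde\cX}(z)^*\tA^*)^{-1}{\mathcal I}_i\bigr]$, you instead define $U$ on the controllability generators $g_\ell(\zeta)u$ and verify isometry by the inner-product identity you display; these are the same map described in two equivalent ways.
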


The proof can be extracted from the proof of Theorem \ref{T:4.7} in
the 
next section and will be omitted.

\section{Weakly unitary realizations}
\label{S:unit}

In this section we will construct functional model weakly-unitary 
realizations for functions $S\in {\mathcal {S}}_{d}(\cU, \cY)$.
The state space for these realizations will be the reproducing kernel 
Hilbert space with reproducing kernel ${\mathbb K}$ which
incorporates 
both 
kernels $K_S$ and $\Phi$ from the previous section. In what follows, 
$\{{\bf e}_1,\ldots,{\bf e}_d\}$ stands for the standard bases for
$\C^d$
and we use notation
\begin{equation}
M(z) =\begin{bmatrix} I_\cY & 0 \\ 0 & Z_{\cU}(\bar{z})^*\end{bmatrix}
\quad\mbox{and}\quad
N_j(z) =\begin{bmatrix} \bar{z}_j I_\cY  & 0 \\
0 & {\bf e}_j \otimes I_{\cU}\end{bmatrix}\quad (j=1,\ldots,d).
\label{4.1}
\end{equation}

\begin{theorem}   
\label{T:3.1}
Let $S\in\cS_{d}(\cU,\cY)$ and let $M(z)$ and $N_k(z)$ be defined as
in 
\eqref{4.1}.  The kernel $K_S$ defined in \eqref{1.10}
can be extended to the positive kernel 
\begin{equation}
{\mathbb K}(z,\zeta)=\begin{bmatrix} K_{S}(z,\zeta) & 
\Psi_1(z,\zeta) & \dots & \Psi_d(z,\zeta) \\ 
\Psi_1(\zeta,z)^* & \Phi_{11}(z,\zeta)& \dots & \Phi_{1d}(z,\zeta)\\
\vdots & \vdots &&\vdots \\
\Psi_d(\zeta,z)^* & \Phi_{d1}(z,\zeta) & \dots & \Phi_{dd}(z,\zeta)
\end{bmatrix}: \; \B^d\times\B^d\to \cL(\cY\oplus\cU^d)
\label{4.2}   
\end{equation}
subject to identity 
\begin{align}
&\begin{bmatrix} I_{\cY} \\ S(z)^{*} \end{bmatrix} \begin{bmatrix}
I_{\cY} & S(\zeta) \end{bmatrix} -
\begin{bmatrix} S(z) \\ I_{\cU} \end{bmatrix} \begin{bmatrix}
S(\zeta)^{*} & I_{\cU}
\end{bmatrix} \notag \\
& =M(z)^{*} {\mathbb K}(z,\zeta)M(\zeta)-\sum_{j=1}^{d} N_{j}(z)^{*}
{\mathbb K}(z,\zeta)N_{j}(\zeta).\label{4.3}
\end{align}
\end{theorem}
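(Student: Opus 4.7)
The plan is to construct $\mathbb{K}$ explicitly from a unitary colligation realization of $S$, so that positivity is proved by exhibiting a factorization and identity \eqref{4.3} follows from the unitarity relations.

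First, Theorem~\ref{T:1.1} provides a unitary colligation $\bU = \bbm{A & B \\ C & D} : \bbm{\cX \\ \cU} \to \bbm{\cX^d \\ \cY}$ whose characteristic function is $S$. Set
\[
G(z) := C(I_\cX - Z_\cX(z) A)^{-1}, \qquad F(\zeta) := (I_{\cX^d} - A Z_\cX(\zeta))^{-1} B,
\]
so that $S(z) = D + G(z) Z_\cX(z) B = D + C Z_\cX(z) F(z)$. Define
\[
\Psi_k(z, \zeta) := G(z)\, \mathcal{I}_k^*\, F(\zeta), \qquad \Phi_{i\ell}(z, \zeta) := \bigl(\mathcal{I}_i^* F(z)\bigr)^*\, \bigl(\mathcal{I}_\ell^* F(\zeta)\bigr),
\]
and let $\mathbb{K}$ be the block kernel \eqref{4.2} assembled from the predefined $K_S$, these $\Psi_k$, and these $\Phi_{i\ell}$.

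Positivity of $\mathbb{K}$ will then be immediate from the factorization $\mathbb{K}(z, \zeta) = H(z) H(\zeta)^*$, where
\[
H(z) := \bbm{G(z) \\ (\mathcal{I}_1^* F(z))^* \\ \vdots \\ (\mathcal{I}_d^* F(z))^*} \colon \cX \to \cY \oplus \cU^d,
\]
provided one first checks that the $(1,1)$ block of $H(z)H(\zeta)^*$, namely $G(z) G(\zeta)^*$, equals the prescribed $K_S(z, \zeta)$; the $(2,2)$ and off-diagonal blocks of $H(z) H(\zeta)^*$ match $\Phi_{i\ell}$ and $\Psi_k$ by construction.

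Identity \eqref{4.3} decomposes, after expanding $M(z)^*\mathbb{K}(z, \zeta) M(\zeta)$ and $N_j(z)^* \mathbb{K}(z, \zeta) N_j(\zeta)$ in block form, into three subidentities: the $(1,1)$ entry $(1 - \langle z, \zeta \rangle) K_S(z, \zeta) = I_\cY - S(z)S(\zeta)^*$; the $(2,2)$ entry $\sum_j \Phi_{jj}(z, \zeta) - \sum_{i, \ell} \bar{z}_i \zeta_\ell \Phi_{i\ell}(z, \zeta) = I_\cU - S(z)^* S(\zeta)$ (which is the Agler decomposition guaranteed by Theorem~\ref{T:2.5}); and the $(1,2)$ entry $\sum_k (\zeta_k - z_k) \Psi_k(z, \zeta) = S(\zeta) - S(z)$, the $(2,1)$ entry being its Hermitian adjoint. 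The $(1,1)$ identity, equivalently $G(z) G(\zeta)^* = K_S(z, \zeta)$, will follow from the ``row'' unitarity relations $AA^* + BB^* = I_{\cX^d}$, $AC^* + BD^* = 0$, $CC^* + DD^* = I_\cY$ together with the telescoping $G(z) Z_\cX(z) A = G(z) - C$. The $(2,2)$ identity will follow symmetrically from the ``column'' relations $A^* A + C^* C = I_\cX$, $A^* B + C^* D = 0$, $B^* B + D^* D = I_\cU$ together with $A Z_\cX(\zeta) F(\zeta) = F(\zeta) - B$. The $(1,2)$ identity reduces, via $Z_\cX(\zeta) - Z_\cX(z) = \sum_k (\zeta_k - z_k) \mathcal{I}_k^*$, to the resolvent identity
\[
(I - Z_\cX(\zeta) A)^{-1} Z_\cX(\zeta) - (I - Z_\cX(z) A)^{-1} Z_\cX(z) = (I - Z_\cX(z) A)^{-1} [Z_\cX(\zeta) - Z_\cX(z)] (I - A Z_\cX(\zeta))^{-1},
\]
which is verified by multiplying through by $(I - Z_\cX(z)A)$ on the left and $(I - AZ_\cX(\zeta))$ on the right.

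The principal technical burden lies in the $(1,1)$ and $(2,2)$ verifications, where the six unitarity relations must be deployed in the correct sequence to collapse four cross-terms down to $(1-\langle z,\zeta\rangle)G(z)G(\zeta)^*$ (respectively $F(z)^*(I - Z_\cX(z)^*Z_\cX(\zeta))F(\zeta)$); each computation however mirrors the standard single-variable de~Branges--Rovnyak manipulation, with only the extra block structure imposed by $Z_\cX(\cdot)$ and $\mathcal{I}_k^*$ to bookkeep.
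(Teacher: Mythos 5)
Your proposal is correct and follows essentially the same route as the paper's proof: both take a unitary colligation from Theorem~\ref{T:1.1}, set $\Psi_k$ and $\Phi_{ij}$ via the formulas \eqref{4.3''}--\eqref{4.3'''}, obtain positivity from the Kolmogorov factorization ${\mathbb K}(z,\zeta)={\mathbb G}(z){\mathbb G}(\zeta)^*$ (your $H$ coincides with the paper's ${\mathbb G}$), and reduce \eqref{4.3} to the three block identities verified by the unitarity relations for $\bU$. You have simply spelled out the ``straightforward calculation'' the paper leaves implicit.
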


\begin{proof}
    By Theorem \ref{T:1.1} we know that any $S \in \cS_{d}(\cU, \cY)$ 
    can be realized as in \eqref{1.12} with $\bU$ as in \eqref{1.11} 
    unitary.  It is then a straightforward calculation to show that 
    the kernel
\begin{equation}  \label{4.3'}
{\mathbb K}(z,\zeta) =  {\mathbb G}(z) {\mathbb G}(\zeta)^{*}
\end{equation}
with
$$
{\mathbb G}(z):=\left[\begin{array}{c}
C (I-Z_{\cX}(z) A)^{-1}\\
B^*(I-Z_{\cX}(z)^* A^*)^{-1}{\mathcal 
I}_{1}\\ \vdots \\ 
B^*(I-Z_{\cX}(z)^* A^*)^{-1}{\mathcal
I}_{d} \end{array}\right]: \; \B^d\to \cL(\cX, \cY\oplus\cU^d)
$$
provides a positive-kernel solution of the identity \eqref{4.3}.
Note that ${\mathbb K}(z, \zeta)$ in \eqref{4.3'} has the form 
\eqref{4.2} with 
\begin{align}
&  \Psi_{k}(z, \zeta) = C (I - Z_{\cX}(z) A)^{-1} {\mathcal I}_{k}^{*}
(I - A Z_{\cX}(\zeta))^{-1} B,  \label{4.3''}\\
& \Phi_{ij}(z, \zeta) = B^{*}(I - Z_{\cX}(z)^{*}A^{*})^{-1} {\mathcal 
I}_{i} {\mathcal I}_{j}^{*} (I - A Z_{\cX}(z))^{-1} B.  \label{4.3'''}
\end{align}
\end{proof}

Identity \eqref{4.3} (as well as the kernel ${\mathbb K}$ itself) 
will be called an {\em Agler decomposition} for $S$. 
Equating the diagonal block entries in 
\eqref{4.3} one gets 
\eqref{2.4} and \eqref{2.11}; equality of nondiagonal blocks gives 
\begin{equation}
S(z)-S(\zeta)=\sum_{j=1}^{d}(z_j-\zeta_j)\Psi_j(z,\zeta).
\label{4.4}
\end{equation}
We let $\cH({\mathbb K})$ be the reproducing kernel Hilbert space 
associated with the kernel ${\mathbb K}$ and remark that 
the elements of  $\cH({\mathbb K})$ are the $\cY\oplus\cU^d$-valued 
functions of the form
\begin{equation}
f=\begin{bmatrix}f_{+} \\ f_{-}\end{bmatrix}: \, \B^d\to
\begin{bmatrix}\cY \\ \cU^d\end{bmatrix},\quad
f_-=\bigoplus_{i=1}^d f_{-,i},
\label{4.5}
\end{equation}
where $f_+$ is analytic and $f_-$ is conjugate-analytic on ${\mathbb 
B}^{d}$. For
functions $g\in\cH({\mathbb K})^d$, we will use the following
representation and notation
\begin{equation}
g=\bigoplus_{i=1}^d g_i:=\left[\begin{array}{c}g_1 \\
\vdots \\ g_d\end{array}\right]:\quad
g_i=\begin{bmatrix}g_{i,+} \\ g_{i,-}\end{bmatrix}\in\cH({\mathbb K}),
\; \; g_{i,-}=\bigoplus_{j=1}^d g_{i,-,j}.
\label{4.6}
\end{equation}
We next observe  that Agler decomposition \eqref{4.3}
can be written in the inner product form as 
the identity
\begin{align}
&\langle y+S(\zeta)u, \; y'+S(z)u'\rangle_{\cY}-\langle
S(\zeta)^*y+u, \;
S(z)^*y'+u'\rangle_{\cU}\notag\\
&=\left\langle {\mathbb K}(\cdot,\zeta)M(\zeta)
\begin{bmatrix}y \\ u\end{bmatrix}, \, {\mathbb
K}(\cdot,z)M(z)\begin{bmatrix}y' \\ u'\end{bmatrix}\right
\rangle_{\cH({\mathbb K})}\notag\\
&\quad -\left\langle\bigoplus_{j=1}^d {\mathbb
K}(\cdot,\zeta)N_{j}(\zeta)\begin{bmatrix}y \\ u\end{bmatrix}, \,
\bigoplus_{j=1}^d{\mathbb K}(\cdot,z)N_{j}(z)\begin{bmatrix}y' \\
u'\end{bmatrix}\right\rangle_{\cH({\mathbb K})^d}
\label{4.7}
\end{align}
holding for all $z,\zeta\in\B^d$, $y,y'\in\cY$ and $u,u'\in\cU$. We
next denote by ${\mathbb K}_0, {\mathbb K}_1,\ldots,{\mathbb
K}_d$ the block  columns of the kernel \eqref{4.2}:
\begin{equation}
{\mathbb K}_0(z,\zeta)=\begin{bmatrix}K_S(z,\zeta) \\
\Psi_1(\zeta,z)^*\\ \vdots \\ \Psi_d(\zeta,z)^*\end{bmatrix},\quad
{\mathbb K}_j(z,\zeta)=\begin{bmatrix}\Psi_j(z,\zeta) \\
\Phi_{1j}(z,\zeta)\\ \vdots \\ \Phi_{dj}(z,\zeta)\end{bmatrix}\quad
(j=1,\ldots,d)
\label{4.8}   
\end{equation}
and use them to define a new  kernel
\begin{equation}
{\mathbb T}(z,\zeta):=\begin{bmatrix}{\mathbb K}_1(z,\zeta)\\
\vdots \\ {\mathbb K}_d(z,\zeta)\end{bmatrix}\colon\B^d\times\B^d\to
\cL(\cU,(\cY\oplus\cU^d)^d).
\label{4.9}   
\end{equation}
The relations
\begin{align*}
{\mathbb K}(\cdot,\zeta)M(\zeta)\begin{bmatrix}y \\
u\end{bmatrix}&={\mathbb 
K}_0(\cdot,\zeta)y+\sum_{j=1}^d \zeta_j{\mathbb K}_j(\cdot,\zeta)u,\\
{\mathbb K}(\cdot,\zeta)N_{j}(\zeta)\begin{bmatrix}y \\
u\end{bmatrix}&=\overline{\zeta}_j{\mathbb
K}_0(\cdot,\zeta)y+{\mathbb K}_j(\cdot,\zeta)u,\end{align*} 
follow immediately from \eqref{4.1} and \eqref{4.2} and allow us to 
rewrite \eqref{4.7} as
\begin{align}
&\left\langle \left[\begin{array}{c}
{\bQ}(\zeta)^*\otimes{\mathbb K}_0(\cdot,\zeta)y
+{\mathbb T}(\cdot,\zeta)u \\
y+S(\zeta)u\end{array}\right], \;
 \left[\begin{array}{c}{\bQ}(z)^*\otimes{\mathbb K}_0(\cdot,z)y'
+{\mathbb T}(\cdot,z)u' \\ y'+S(z)u'\end{array}\right]
\right\rangle\notag\\
&=\left\langle \left[\begin{array}{c}
{\mathbb K}_0(\cdot,\zeta)y+\sum_{j=1}^d\zeta_j{\mathbb
K}_j(\cdot,\zeta)u
\\ S(\zeta)^*y+u\end{array}\right], \;
\left[\begin{array}{c}
{\mathbb K}_0(\cdot,z)y'+\sum_{j=1}^d z_j{\mathbb K}_j(\cdot,z)u'\\
S(z)^*y'+u'\end{array}\right]\right\rangle.
\label{4.10}
\end{align}

\begin{lemma}
Let ${\mathbb K}$ be a fixed Agler decomposition for a function 
$S\in\cS_d(\cU,\cY)$ and let ${\mathbb K}_j$ and ${\mathbb T}$ be
given by 
\eqref{4.8}, \eqref{4.9}. Then the map 
\begin{align} 
V: \; \left[\begin{array}{c}
{\bQ}(\zeta)^*\otimes{\mathbb K}_0(\cdot,\zeta)y
+{\mathbb T}(\cdot,\zeta)u \\ y+S(\zeta)u\end{array}\right]
\to\left[\begin{array}{c}
{\mathbb K}_0(\cdot,\zeta)y+\sum_{j=1}^d 
\zeta_j{\mathbb K}_j(\cdot,\zeta)u
\\ S(\zeta)^*y+u\end{array}\right].
\label{4.11} 
\end{align}
extends by linearity and continuity to an isometry from 
\begin{equation}
{\mathcal D}_V={\mathcal D}\oplus\cY\quad\mbox{onto}\quad
{\mathcal R}_V={\mathcal R}\oplus\cU
\label{4.12}
\end{equation}
where the subspaces ${\mathcal D}\subset \cH({\mathbb K})^d$ and 
${\mathcal R}\subset \cH({\mathbb K})$ are given by
\begin{align}
{\mathcal D}&=\bigvee\left\{
{\bQ}(\zeta)^*\otimes{\mathbb K}_0(\cdot,\zeta)y, \; \;
{\mathbb T}(\cdot,\zeta)u: \, \zeta\in\B^d, \, y\in\cY, \,
u\in\cU\right\},\label{4.13}\\
{\mathcal R}&=\bigvee\left\{{\mathbb K}_0(\cdot,\zeta)y, \; \;
\sum_{j=1}^d \zeta_j{\mathbb K}_j(\cdot,\zeta)u:
\, \zeta\in\B^d, \, y\in\cY, \, u\in\cU\right\}. 
\label{4.14}
\end{align}
\label{L:3.2}
\end{lemma}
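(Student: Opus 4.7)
The plan is to use identity \eqref{4.10} as the isometric identity itself, and then separately identify the domain and range as the claimed orthogonal sums. The proof has three components: (i) verify that $V$ is well-defined and isometric on the linear span of the elementary vectors from \eqref{4.11}, extending continuously to the closed span; (ii) show that this closed span in the ambient Hilbert space $\cH(\mathbb{K})^d \oplus \cY$ equals $\cD \oplus \cY$; and (iii) show that the closed span of the range vectors in $\cH(\mathbb{K}) \oplus \cU$ equals $\cR \oplus \cU$. Since \eqref{4.10} has already been derived from \eqref{4.7} by the substitutions following \eqref{4.8}--\eqref{4.9}, no new Agler-type computation is needed; the work is purely structural.

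For step (i), I would invoke the standard reproducing-kernel well-definedness argument. Identity \eqref{4.10} asserts precisely that for any two elementary domain vectors $v, v'$ parametrized by $(\zeta, y, u)$ and $(z, y', u')$, one has $\langle Vv, Vv'\rangle = \langle v, v'\rangle$. Hence if $\sum_i c_i v_i = 0$ in $\cH(\mathbb{K})^d \oplus \cY$, then
\[
\Big\| \sum_i c_i Vv_i\Big\|^2 = \sum_{i,j} c_i \bar{c}_j\langle Vv_i, Vv_j\rangle = \sum_{i,j} c_i \bar{c}_j \langle v_i, v_j\rangle = \Big\| \sum_i c_i v_i\Big\|^2 = 0,
\]
so $V$ extends unambiguously to a linear isometry on the span, and then by continuity to the closed span.

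For step (ii), I would exploit the vanishing $\bQ(0)^* = 0$. Setting $u = 0$ and $\zeta = 0$ in the elementary domain vector collapses it to $\sbm{0 \\ y}$, so $\{0\} \oplus \cY \subset \cD_V$. Since $\cD_V$ is a subspace, subtracting the $\cY$-component from a general elementary vector (with $u=0$) shows $\sbm{\bQ(\zeta)^* \otimes \mathbb{K}_0(\cdot,\zeta)y \\ 0} \in \cD_V$; similarly with $y=0$, subtracting $\sbm{0 \\ S(\zeta) u}$ shows $\sbm{\mathbb{T}(\cdot,\zeta)u \\ 0} \in \cD_V$. Taking the closed span of these two families yields exactly $\cD \oplus \{0\}$ as defined in \eqref{4.13}, so $\cD_V = \cD \oplus \cY$. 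An entirely parallel argument for step (iii) uses $\zeta = 0$ in the range vector, which gives $\sbm{\mathbb{K}_0(\cdot,0)y \\ S(0)^*y + u}$; taking $y=0$ isolates $\{0\} \oplus \cU$, and then subtracting $\cU$-components from general elementary range vectors isolates $\sbm{\mathbb{K}_0(\cdot,\zeta) y \\ 0}$ and $\sbm{\sum_j \zeta_j \mathbb{K}_j(\cdot,\zeta)u \\ 0}$, so their closed span is exactly $\cR$ from \eqref{4.14}.

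The only mildly subtle point — which I would flag as the main bookkeeping obstacle — is the onto claim: one must verify that the closed range of $V$ is all of $\cR \oplus \cU$, not merely a subspace. This is resolved by step (iii) above, since the closure of the $V$-image of the dense span contains both $\{0\} \oplus \cU$ and the generating family for $\cR$, and hence equals $\cR \oplus \cU$. Together with the isometry from step (i), $V$ is then a unitary map from $\cD \oplus \cY$ onto $\cR \oplus \cU$.
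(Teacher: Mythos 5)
Your proof is correct and takes essentially the same route as the paper: use identity \eqref{4.10} as the Gram-matrix preservation statement to get a well-defined isometry on the span of elementary vectors, then exploit ${\bQ}(0)^*=0$ (and the resulting vanishing of $\sum_j \zeta_j{\mathbb K}_j(\cdot,\zeta)u$ at $\zeta=0$) to see that $\{0\}\oplus\cY$ and $\{0\}\oplus\cU$ are contained in ${\mathcal D}_V$ and ${\mathcal R}_V$ respectively, whence the direct-sum splittings follow. You spell out the well-definedness argument and the subtraction step more explicitly than the paper does, but the underlying structure is identical.
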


\begin{proof} It follows from \eqref{4.10} that $V$ defined as in 
\eqref{4.11} extends by linearity and continuity to an isometry from 
$$
{\mathcal D}_V=\bigvee\left\{
\left[\begin{array}{c}{\bQ}(\zeta)^*\otimes{\mathbb
K}_0(\cdot,\zeta)y\\
y\end{array}\right], \; \left[\begin{array}{c}
{\mathbb T}(\cdot,\zeta)u \\ S(\zeta)u\end{array}\right]
: \, \zeta\in\B^d, \, y\in\cY, \, u\in\cU\right\}
$$
onto
$$
{\mathcal R}_V=\bigvee\left\{
\left[\begin{array}{c}
{\mathbb T}(\cdot,\zeta)y\\ S(\zeta)^*y\end{array}\right], \;
\left[\begin{array}{c}\sum_{j=1}^d \zeta_j{\mathbb K}_j(\cdot,\zeta)u
\\ u\end{array}\right]:
\, \zeta\in\B^d, \, y\in\cY, \, u\in\cU\right\}.
$$
It is readily seen that ${\mathcal D}_V$ and ${\mathcal R}_V$ contain
respectively all vectors of the form $\sbm{0 \\ y}$ and $\sbm{0 \\ u}$
and therefore they split into the direct sums \eqref{4.12}.
\end{proof}
A straightforward verification shows that the
orthogonal complements ${\mathcal D}^\perp=\cH({\mathbb
K})^d\ominus{\mathcal D}$ and ${\mathcal R}^\perp=\cH({\mathbb
K})\ominus{\mathcal R}$ (the defect spaces of the isometry $V$)
can be described as
\begin{align}
{\mathcal D}^\perp&=\left\{g\in
\cH({\mathbb K})^d: \; \sum_{i=1}^dz_ig_{i,+}(z)\equiv 0 \; \;
\text{and} 
\; \;  \sum_{i=1}^d g_{i,-,i}(z)\equiv 0\right\},\label{4.15}\\
{\mathcal R}^\perp&=\left\{
f\in\cH({\mathbb K}): \; f_+(z)\equiv 0 \; \; \text{and} \; \;
\sum_{i=1}^d \overline{z}_if_{-,i}(z)\equiv 0\right\}
\label{4.16}
\end{align}  
where we have used notation \eqref{4.5} and \eqref{4.6}.
We next use the same notation to define two linear maps ${\bf s}: \, 
\cH({\mathbb K})\to
\cH(K_S)$ and $\widetilde{\bf s}: \, \cH({\mathbb K})^d\to
\cH(\Phi)$ by
\begin{equation}
{\bf s}: \; f\mapsto f_{+},\qquad
\widetilde{\bf s}: \;  g=\bigoplus_{i=1}^dg_i \mapsto \sum_{i=1}^d
g_{i,-,i},
\label{4.17}
\end{equation}  
and observe the equalities
\begin{equation}
\langle f, \, {\mathbb K}_0(\cdot,\zeta)y\rangle_{\cH({\mathbb K})}
=\langle ({\bf s}f)(\zeta), \, y\rangle_{\cY},\qquad
\langle g, \, {\mathbb T}(\cdot,\zeta)u\rangle_{\cH({\mathbb
K})^d}=\langle (\widetilde{\bf s}g)(\zeta), \, u\rangle_{\cU}
\label{4.18}
\end{equation}
holding for all $f\in\cH({\mathbb K})$, $g\in\cH({\mathbb K})^d$,
$\zeta\in\B^d$, $y\in\cY$ and $u\in\cU$. Indeed, for a
function $f$ in $\cH({\mathbb K})$, we have from \eqref{4.8} by the
reproducing kernel property
$$
\langle f, \, {\mathbb K}_0(\cdot,\zeta)y\rangle_{\cH({\mathbb K})}=
\left\langle f, \; {\mathbb K}(\cdot,\zeta)\begin{bmatrix}
y \\ 0 \end{bmatrix}\right\rangle_{\cH({\mathbb K})}=
\left\langle f_{+}(\zeta), \, y
\right\rangle_{\cY}=\langle ({\bf s}f)(\zeta), \, y\rangle_{\cY}
$$
which proves the first equality in \eqref{4.18}. The proof of the
second
is much the same. 
\begin{definition}
A contractive colligation 
\begin{equation}
\bU = \begin{bmatrix} A & B \\ C & D\end{bmatrix}\colon \;
\begin{bmatrix}\cH({\mathbb K}) \\ \cU\end{bmatrix}\to
\begin{bmatrix}\cH({\mathbb K})^d \\ \cY\end{bmatrix}
\label{4.19}
\end{equation}
will be called {\em a two-component canonical functional-model}
({\rm abbreviated to $\tcfm$ in what follows})
colligation associated with a fixed Agler decomposition \eqref{4.3}
of a 
given $S\in{\mathcal {S}}_{d}(\cU, \, \cY)$ if
\begin{enumerate}
\item The state space operator $A=\operatorname{Col}_{1\le k\le d}A_k$
solves the structured Gleason problem
\begin{equation}
({\bf s}f)(z)-({\bf s}f)(0)=\sum_{k=1}^d z_k
(A_kf)_+(z)\quad\mbox{for 
all}\quad f\in  \cH({\mathbb K}),
\label{4.20}
\end{equation}
whereas the adjoint operator $A^*$
solves the dual structured Gleason problem
\begin{equation}  
(\widetilde{\bf s}g)(z)-(\widetilde{\bf s}g)(0)=
\sum_{k=1}^q\overline{z}_k(A^*g)_{-,k}(z)\quad\mbox{for    
all}\quad g\in  \cH({\mathbb K})^{d}.
\label{4.21}  
\end{equation}
\item The operators $C: \, \cH({\mathbb K})\to \cY$, $B^*:
 \cH({\mathbb K})^d\to \cU$ and $D: \, \cU\to\cY$ are of the form 
\begin{equation}
C\colon \, f\to ({\bf s}f)(0),\quad B^*\colon \, g\to(\widetilde{\bf 
s}g)(0)\quad\mbox{and}\quad D\colon u\to S(0)u.
\label{4.22}
\end{equation}
\end{enumerate}
\label{D:4.1}
\end{definition}

\begin{proposition}
Relations \eqref{4.19}, \eqref{4.21} and \eqref{4.22} are equivalent 
respectively to equalities
\begin{align}
& A^*\left({\bQ}(\zeta)^*\otimes{\mathbb K}_0(\cdot,\zeta)y\right)=
{\mathbb K}_0(\cdot,\zeta)y-{\mathbb K}_0(\cdot,0)y,\label{4.23}\\
&A\left(\sum_{j=1}^d \zeta_j {\mathbb K}_j(\cdot,\zeta)u\right)
={\mathbb T}(\cdot,\zeta)u-{\mathbb T}(\cdot,0)u,\label{4.24}\\
& C^*y={\mathbb K}_0(\cdot,0)y,\quad
Bu={\mathbb T}(\cdot,0)u,\quad\mbox{and}\quad D^*y=S(0)^*y
\label{4.25}
\end{align}
holding for every $\zeta\in\B^d$, $y\in\cY$ and $u\in\cU$.
\label{P:4.2}
\end{proposition}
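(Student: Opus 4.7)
The plan is to establish each of the three equivalences separately, using the reproducing-kernel identities \eqref{4.18} as a dictionary that converts pointwise evaluations of $\mathbf{s}f$ and $\widetilde{\mathbf{s}}g$ into inner products against the kernel functions ${\mathbb K}_0(\cdot,\zeta)y$ and ${\mathbb T}(\cdot,\zeta)u$. In each direction the recipe is the same: pair the relevant Gleason-type identity with a test vector, translate both sides via \eqref{4.18}, and invoke the defining property of the Hilbert-space adjoint.

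For \eqref{4.20}$\iff$\eqref{4.23}, fix $\zeta\in\B^d$ and $y\in\cY$ and pair \eqref{4.20} at $z=\zeta$ against $y$ in the $\cY$-inner product. The first identity of \eqref{4.18} turns the left-hand side into $\langle f,{\mathbb K}_0(\cdot,\zeta)y-{\mathbb K}_0(\cdot,0)y\rangle_{\cH({\mathbb K})}$. Each summand on the right transforms as $\zeta_k\langle (A_kf)_+(\zeta),y\rangle_\cY=\langle A_kf,\overline{\zeta}_k{\mathbb K}_0(\cdot,\zeta)y\rangle$, so collecting them produces $\langle Af,\bQ(\zeta)^*\otimes{\mathbb K}_0(\cdot,\zeta)y\rangle=\langle f,A^*(\bQ(\zeta)^*\otimes{\mathbb K}_0(\cdot,\zeta)y)\rangle$. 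Since $f\in\cH({\mathbb K})$ is arbitrary, the resulting equality is precisely \eqref{4.23}, and every step is reversible.

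The second equivalence \eqref{4.21}$\iff$\eqref{4.24} is the dual computation. Pair \eqref{4.21} at $z=\zeta$ with $u\in\cU$. A direct application of the reproducing property to the $(k{+}1)$-st block column of \eqref{4.2} yields $\langle h_{-,k}(\zeta),u\rangle_\cU=\langle h,{\mathbb K}_k(\cdot,\zeta)u\rangle$ for every $h\in\cH({\mathbb K})$; setting $h=A^*g$ and summing with coefficients $\overline{\zeta}_k$ gives $\sum_{k=1}^{d}\overline{\zeta}_k\langle (A^*g)_{-,k}(\zeta),u\rangle_\cU=\langle g,A\bigl(\sum_{k=1}^{d}\zeta_k{\mathbb K}_k(\cdot,\zeta)u\bigr)\rangle$. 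The second identity of \eqref{4.18} converts the left side into $\langle g,{\mathbb T}(\cdot,\zeta)u-{\mathbb T}(\cdot,0)u\rangle$, and cancelling arbitrary $g\in\cH({\mathbb K})^d$ produces \eqref{4.24}.

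Finally \eqref{4.22}$\iff$\eqref{4.25} follows by specializing \eqref{4.18} to $\zeta=0$: the relation $\langle Cf,y\rangle_\cY=\langle(\mathbf{s}f)(0),y\rangle_\cY=\langle f,{\mathbb K}_0(\cdot,0)y\rangle$ reads off $C^*y={\mathbb K}_0(\cdot,0)y$; analogously $\langle B^*g,u\rangle_\cU=\langle(\widetilde{\mathbf{s}}g)(0),u\rangle_\cU=\langle g,{\mathbb T}(\cdot,0)u\rangle$ gives $Bu={\mathbb T}(\cdot,0)u$; and $D^*y=S(0)^*y$ is just the adjoint of $D\colon u\mapsto S(0)u$. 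The only bookkeeping to mind is the conjugation $\zeta_k\langle\cdot,\cdot\rangle=\langle\cdot,\overline{\zeta}_k\cdot\rangle$, which accounts for the appearance of the factor $\bQ(\zeta)^*$ in \eqref{4.23}; beyond that, the proposition is essentially the reproducing-kernel reformulation of Definition \ref{D:4.1} and presents no substantive obstacle.
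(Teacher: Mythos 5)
Your proof is correct and follows essentially the same approach as the paper: translate the structured Gleason conditions \eqref{4.20}--\eqref{4.21} and the formulas \eqref{4.22} into adjoint identities by pairing against test vectors $y\in\cY$ and $u\in\cU$, then using the reproducing-kernel dictionary \eqref{4.18} to pass the computation onto ${\mathbb K}_0(\cdot,\zeta)y$ and ${\mathbb T}(\cdot,\zeta)u$. The paper carries out the first equivalence \eqref{4.20}$\Leftrightarrow$\eqref{4.23} and the $C^*$ formula in detail and states that the others follow by the same token; you have simply spelled out the dual computation \eqref{4.21}$\Leftrightarrow$\eqref{4.24}, which is a small but welcome addition. (You also silently corrected the reference: the proposition's first equivalence should of course cite \eqref{4.20}, the structured Gleason problem, rather than \eqref{4.19}, which is merely the colligation's block structure.)
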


\begin{proof} It follows from the first equality in \eqref{4.18} that 
$$
\left\langle ({\bf s}f)(z)-({\bf s}f)(0), \, y\right\rangle_\cY=
\left\langle f, \, {\mathbb K}_0(\cdot,z)y -{\mathbb 
K}_0(\cdot,0)y\right\rangle_{\cH({\mathbb K})}
$$
and on the other hand, 
\begin{align*}
\left\langle \sum_{k=1}^d (A_kf)_+(z),  \, y\right\rangle_\cY&=
\sum_{k=1}^d \left\langle A_kf, \, \overline{z}_k{\mathbb
K}_0(\cdot,z)y\right\rangle_{\cH({\mathbb K})}\notag \\
&=\left\langle Af, \, {\bQ}(z)^*\otimes{\mathbb 
K}_0(\cdot,z)y\right\rangle_{\cH({\mathbb K})^d}\notag\\
&=\left\langle f, \, A^*\left({\bQ}(z)^*\otimes{\mathbb 
K}_0(\cdot,z)y\right)\right\rangle_{\cH({\mathbb K})}.
\end{align*}
Since the two latter equalities hold for every 
$f\in \cH({\mathbb K})$ and $y\in\cY$, the equivalence 
\eqref{4.20} $\Leftrightarrow$ \eqref{4.23} follows. 
The equivalence \eqref{4.21}$\Leftrightarrow$ \eqref{4.24} follows 
from \eqref{4.18} in much the same way;
the formula for $C^*$ in \eqref{4.24} follows from 
$$
\langle f, \, C^*y\rangle=\langle Cf, \, y\rangle=\langle ({\bf
s}f)(0), 
\, y\rangle=\left\langle f, \, {\mathbb K}_0(\cdot,0)y\right\rangle
$$
and the formula for $B$ is a consequence of a similar computation.
The formula for $D^*$ is self-evident.\end{proof}
\begin{proposition}
Let $B$, $C$ and $D$ be the operators defined in \eqref{4.22}. Then
\begin{equation}
CC^*+DD^*=I_{\cY} \quad\mbox{and}\quad B^*B+D^*D=I_{\cY}.
\label{4.26}
\end{equation}
Furthermore,
\begin{align}
B^*\colon & {\bQ}(\zeta)^*\otimes{\mathbb K}_0(\cdot,\zeta)y
\to S(\zeta)^*y-S(0)^*y,\label{4.27}\\    
B^*\colon & {\mathbb T}(\cdot,\zeta)u
\to u-S(0)^*S(\zeta)u\label{4.28}
\end{align}
for all $\zeta\in\B^d$, $y\in\cY$ and $u\in\cU$, where ${\mathbb K}_0$
and ${\mathbb T}$ are defined in \eqref{4.8}, \eqref{4.9}.
\end{proposition}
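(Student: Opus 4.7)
The proof unfolds as a direct calculation: each quantity is evaluated by unpacking the definitions in \eqref{4.22}--\eqref{4.25} and then applying the Agler decomposition \eqref{4.3} at one or both arguments set to $0$. The only potential source of confusion is correctly tracking the block and $\pm$-subscript notation from \eqref{4.5}--\eqref{4.9}, but no deeper ideas are needed.

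First I would verify $CC^{*}+DD^{*}=I_{\cY}$. By \eqref{4.25}, $C^{*}y={\mathbb K}_{0}(\cdot,0)y$, and by \eqref{4.22}, $Cf=({\bf s}f)(0)$. Reading off the top block entry of ${\mathbb K}_{0}(z,0)$ from \eqref{4.8} yields $({\bf s}\,{\mathbb K}_{0}(\cdot,0)y)(z)=K_{S}(z,0)y$, so $CC^{*}y=K_{S}(0,0)y=(I_{\cY}-S(0)S(0)^{*})y$. Since $DD^{*}y=S(0)S(0)^{*}y$, the identity follows. The second identity, which should read $B^{*}B+D^{*}D=I_{\cU}$, is treated the same way: $Bu={\mathbb T}(\cdot,0)u$ and $B^{*}g=(\widetilde{\bf s}g)(0)$. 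From \eqref{4.8}--\eqref{4.9}, the component $g_{i,-,i}$ of $g={\mathbb T}(\cdot,0)u=\bigoplus_{i}{\mathbb K}_{i}(\cdot,0)u$ equals $\Phi_{ii}(\cdot,0)u$, so $(\widetilde{\bf s}g)(0)=\sum_{i=1}^{d}\Phi_{ii}(0,0)u$. Setting $z=\zeta=0$ in \eqref{2.11} (the diagonal piece of \eqref{4.3}) gives $\sum_{i}\Phi_{ii}(0,0)=I_{\cU}-S(0)^{*}S(0)$, and combining with $D^{*}Du=S(0)^{*}S(0)u$ yields the claim.

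For the formula \eqref{4.28}, exactly the same slicing argument applied now at parameter $\zeta\in\B^{d}$ gives $B^{*}({\mathbb T}(\cdot,\zeta)u)=(\widetilde{\bf s}\,{\mathbb T}(\cdot,\zeta)u)(0)=\sum_{i=1}^{d}\Phi_{ii}(0,\zeta)u$. Setting $z=0$ in \eqref{2.11} kills the double sum term, leaving $\sum_{i}\Phi_{ii}(0,\zeta)=I_{\cU}-S(0)^{*}S(\zeta)$, which is precisely \eqref{4.28}. For \eqref{4.27}, the $i$-th block of $g={\bQ}(\zeta)^{*}\otimes{\mathbb K}_{0}(\cdot,\zeta)y\in\cH({\mathbb K})^{d}$ is $\bar\zeta_{i}\,{\mathbb K}_{0}(\cdot,\zeta)y$, and the $(-,i)$-subcomponent of ${\mathbb K}_{0}(z,\zeta)y$ is $\Psi_{i}(\zeta,z)^{*}y$ by \eqref{4.8}. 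Hence $(\widetilde{\bf s}g)(z)=\sum_{i=1}^{d}\bar\zeta_{i}\Psi_{i}(\zeta,z)^{*}y$; evaluating at $z=0$ and invoking the off-diagonal identity \eqref{4.4} specialized to $z=0$ (which, after taking adjoints and renaming, reads $S(\zeta)^{*}-S(0)^{*}=\sum_{i}\bar\zeta_{i}\Psi_{i}(\zeta,0)^{*}$) yields \eqref{4.27}.

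The only subtlety is being careful about which argument of $\Psi_{j}$ is conjugate-analytic: the column ${\mathbb K}_{0}(z,\zeta)$ is obtained from the first block column of ${\mathbb K}(z,\zeta)$ in \eqref{4.2}, so its lower blocks are $\Psi_{j}(\zeta,z)^{*}$ (not $\Psi_{j}(z,\zeta)^{*}$), and this ordering is what makes the specialization of \eqref{4.4} match the right-hand side of \eqref{4.27}. Apart from this indexing care, every step is a direct substitution, so I do not anticipate any genuine obstacle.
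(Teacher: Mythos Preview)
Your proposal is correct and follows essentially the same route as the paper. The only difference is cosmetic: for \eqref{4.26} the paper computes $\|C^{*}y\|^{2}$ and $\|Bu\|^{2}$ via the reproducing-kernel inner product identities \eqref{4.29}--\eqref{4.30} and then polarizes, whereas you compute $CC^{*}y$ and $B^{*}Bu$ directly as operator compositions; for \eqref{4.27}--\eqref{4.28} the arguments are identical (compare your calculation with \eqref{4.32}--\eqref{4.34}).
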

\begin{proof} Upon letting $f={\mathbb K}_0(\cdot,\zeta)y$ and 
$g={\mathbb T}(\cdot,\zeta)u$ in formulas \eqref{4.18} and making use
of \eqref{4.8} we get
\begin{align}
\left\langle {\mathbb K}_0(\cdot,\zeta)y, \,
{\mathbb K}_0(\cdot,z)y\right\rangle_{\cH({\mathbb K})^p}&=
\left\langle K_{S}(z,\zeta)y, \, y\right\rangle_{\cY},
\label{4.29}\\
\left\langle {\mathbb T}(\cdot,\zeta)u, \, {\mathbb T}(\cdot,z)u
\right\rangle_{\cH({\mathbb K})^d}&=\sum_{j=1}^d
\left\langle\Phi_{jj}(z,\zeta)u, \,
u\right\rangle_{\cU}.\label{4.30}
\end{align}
We then have
\begin{align*}
\|C^*y\|^2&=\left\|{\mathbb K}_0(\cdot,0)y \right\|^2=\left\langle 
K_S(0,0)y, \, y\right\rangle=\left\langle (I-S(0)S(0)^*)y, \, 
y\right\rangle,\\
\|Bu\|^2&=\left\|{\mathbb T}(\cdot,0)u\right\|^2=
\left\langle \sum_{k=1}^d  \Phi_{kk}(0,0) u, \, 
u\right\rangle=\left\langle (I-S(0)^*S(0))u, \, u\right\rangle,
\end{align*}
where the first equalities follow from formulas \eqref{4.25} for $B$
and 
$C^*$, the second equalities follow upon letting $z=\zeta=0$ in 
\eqref{4.29}, \eqref{4.30}, and finally, the third equalities follow
from 
the representation  formulas \eqref{2.11} and \eqref{2.12} evaluated
at 
$z=\zeta=0$. Taking into account formulas \eqref{4.22} 
and \eqref{4.25} for $D$ and $D^*$, we then have equalities
\begin{align}
\|C^*y\|^2&=\|y\|^2-\|S(0)^*y\|^2=\|y\|^2-\|D^*y\|^2,\label{4.31}\\
\|Bu\|^2&=\|u\|^2-\|S(0)u\|^2=\|u\|^2-\|Du\|^2\notag
\end{align}
holding for all $y\in\cY$ and $u\in\cU$ which are equivalent to
operator  equalities \eqref{4.26}.

\smallskip

By definitions \eqref{4.22} of $B^*$ and \eqref{4.8}, \eqref{4.9} of 
${\mathbb K}_j$ and ${\mathbb T}$,
\begin{align}
B^*\left({\bQ}(\zeta)^*\otimes{\mathbb K}_0(\cdot,\zeta)y\right)
&=\widetilde{\bf s}\left({\bQ}(\zeta)^*\otimes{\mathbb
K}_0(\cdot,\zeta)y\right)(0)=
\sum_{j=1}^d \overline{\zeta}_j\Psi_j(\zeta,0)^*y,
\label{4.32}\\
B^*{\mathbb T}(\cdot,\zeta)u&=\widetilde{\bf s}\left({\mathbb
T}(\cdot,\zeta)u\right)(0)=\sum_{j=1}^d \Phi_{jj}
(0,\zeta)u.\label{4.33}
\end{align}
Upon letting $z=0$ in \eqref{4.4} and \eqref{2.11} we get
\begin{equation}
S(\zeta)^*-S(0)^*=\sum_{j=1}^d \overline{\zeta}_j\Psi_j(\zeta,0)^*,
\quad I_{\cU}-S(0)^*S(\zeta)=
\sum_{j=1}^d \Phi_{jj}(0,\zeta)
\label{4.34}
\end{equation}
which being combined with \eqref{4.32} and 
\eqref{4.33} give \eqref{4.27} and \eqref{4.28}.
\end{proof}

Formulas \eqref{4.27}, \eqref{4.28} describing the action of the 
operator $B^*$ on elementary kernels of the subspace ${\mathcal D}$ 
defined in \eqref{4.13} were easily obtained from the general formula 
\eqref{4.22} for $B^*$. Although the operator $A^*$ is not defined in 
Definition \ref{D:4.1} 
on the whole space $\cH({\mathbb K})^d$, it turns out that 
its action on elementary kernels of ${\mathcal D}$ is completely 
determined by conditions \eqref{4.20} and \eqref{4.21}.
Formula \eqref{4.23} (which is equivalent to 
\eqref{4.20}) does half of the job; the next proposition takes care 
of the other half.

\begin{proposition}
Let ${\bf U}=\sbm {A & B \\ C & D}$ be a $\tcfm$ colligation 
associated with the Agler decomposition \eqref{4.3} of a given 
$S\in\cS_d(\cU,\, \cY)$ and let ${\mathbb T}$ be given 
by \eqref{4.9}. Then 
\begin{equation}
A^*{\mathbb T}(\cdot,\zeta)u=\sum_{j=1}^d \zeta_j
{\mathbb K}_j(\cdot,\zeta)u-{\mathbb K}_0(\cdot,0)S(\zeta)u\quad
(\zeta\in\B^d, \; y\in\cY, \; u\in\cU).
\label{4.35}
\end{equation}
\label{P:4.3}
\end{proposition}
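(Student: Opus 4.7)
The plan is to apply $\mathbf{U}$ to the test vector $\xi := \sbm{\sum_{j=1}^{d}\zeta_{j}{\mathbb K}_{j}(\cdot,\zeta)u \\ u} \in \cH({\mathbb K})\oplus\cU$, verify that $\mathbf{U}\xi = \sbm{{\mathbb T}(\cdot,\zeta)u \\ S(\zeta)u}$, and then combine this with the isometry of $V$ from Lemma \ref{L:3.2} together with the contractivity of $\mathbf{U}$ to force $\mathbf{U}^{*}\sbm{{\mathbb T}(\cdot,\zeta)u \\ S(\zeta)u} = \xi$. Equating the first coordinates then yields \eqref{4.35}, with the $C^{*}$-contribution absorbed via the formula $C^{*}y = {\mathbb K}_{0}(\cdot,0)y$ from \eqref{4.25}.

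The identity $\mathbf{U}\xi = \sbm{{\mathbb T}(\cdot,\zeta)u \\ S(\zeta)u}$ unpacks all of the structure built into the $\tcfm$ definition. For the first block-row: \eqref{4.24} gives $A\bigl(\sum_{j}\zeta_{j}{\mathbb K}_{j}(\cdot,\zeta)u\bigr) = {\mathbb T}(\cdot,\zeta)u - {\mathbb T}(\cdot,0)u$, while \eqref{4.25} gives $Bu = {\mathbb T}(\cdot,0)u$, so the sum is ${\mathbb T}(\cdot,\zeta)u$. For the second block-row, the definition $Cf = ({\bf s}f)(0)$ and the fact that the $\cY$-component of ${\mathbb K}_{j}(\cdot,\zeta)u$ is $\Psi_{j}(\cdot,\zeta)u$ yield $C\bigl(\sum_{j}\zeta_{j}{\mathbb K}_{j}(\cdot,\zeta)u\bigr) = \sum_{j}\zeta_{j}\Psi_{j}(0,\zeta)u$, and by \eqref{4.4} with $z=0$ this equals $\bigl(S(\zeta)-S(0)\bigr)u$; adding $Du = S(0)u$ produces $S(\zeta)u$.

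Once $\mathbf{U}\xi = \sbm{{\mathbb T}(\cdot,\zeta)u \\ S(\zeta)u}$ is in hand, the remainder is a rigidity argument. Formula \eqref{4.11} (at $y=0$) reads $V\sbm{{\mathbb T}(\cdot,\zeta)u \\ S(\zeta)u} = \xi$, and $V$ is isometric by Lemma \ref{L:3.2}, so $\|\mathbf{U}\xi\| = \|\sbm{{\mathbb T}(\cdot,\zeta)u \\ S(\zeta)u}\| = \|\xi\|$. Since $\mathbf{U}$ is a contraction, the equality $\|\mathbf{U}\xi\| = \|\xi\|$ together with $I - \mathbf{U}^{*}\mathbf{U} \ge 0$ forces $\mathbf{U}^{*}\mathbf{U}\xi = \xi$ via $\langle (I-\mathbf{U}^{*}\mathbf{U})\xi,\xi\rangle = 0$. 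Therefore
\[
\mathbf{U}^{*}\sbm{{\mathbb T}(\cdot,\zeta)u \\ S(\zeta)u} \;=\; \mathbf{U}^{*}(\mathbf{U}\xi) \;=\; \xi,
\]
and reading off the first block-row gives $A^{*}{\mathbb T}(\cdot,\zeta)u + C^{*}S(\zeta)u = \sum_{j}\zeta_{j}{\mathbb K}_{j}(\cdot,\zeta)u$, which, after substituting $C^{*}S(\zeta)u = {\mathbb K}_{0}(\cdot,0)S(\zeta)u$, is exactly \eqref{4.35}.

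The hardest step is the computation of the $C$-component of $\mathbf{U}\xi$: this is where the off-diagonal Agler relation \eqref{4.4} (extracted from the nondiagonal blocks of \eqref{4.3}) enters and reconstructs $S(\zeta)u$ from the $\Psi_{j}$'s in precisely the form needed so that the contraction/isometry rigidity forces equality and not merely an inequality; everything else reduces to bookkeeping with \eqref{4.24}, \eqref{4.25} and the standard equality case for a contraction.
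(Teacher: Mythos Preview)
Your proof is correct. Both your argument and the paper's hinge on the same rigidity principle---a contraction achieving norm equality on a vector must satisfy $\mathbf{U}^{*}\mathbf{U}\xi=\xi$---but the execution differs. The paper works componentwise: it verifies $\|h_{\zeta,u}\|^{2}-\|Ah_{\zeta,u}\|^{2}=\|Ch_{\zeta,u}\|^{2}$ by a direct kernel computation using \eqref{2.11}, deduces $(I-A^{*}A-C^{*}C)h_{\zeta,u}=0$, then separately derives $A^{*}B+C^{*}D=0$ from \eqref{4.26} and assembles the pieces. You instead work with the full $2\times 2$ block colligation, compute $\mathbf{U}\xi$ in one stroke, and invoke the isometry of $V$ from Lemma~\ref{L:3.2} to obtain the norm equality without redoing any kernel calculations; the off-diagonal relation $A^{*}B=-C^{*}D$ never needs to be isolated. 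Your route is shorter and more structural, reusing machinery already in place; the paper's is more hands-on and makes the dependence on the Agler identity \eqref{2.11} explicit at each step.
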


\begin{proof} We have to show that formula \eqref{4.35} follows from 
conditions in Definition \ref{D:4.1}. To this end, we first verify
the 
equality
\begin{equation}
\left\|h_{\zeta,u}\right\|_{\cH({\mathbb K})}^2- 
\left\|Ah_{\zeta,u}\right\|_{\cH({\mathbb K})^d}^2=
\left\|Ch_{\zeta,u}\right\|_{\cU}^2,\quad\mbox{where}\quad
h_{\zeta,u}=\sum_{j=1}^d \zeta_j{\mathbb K}_j(\cdot,\zeta)u.
\label{4.36}  
\end{equation}
Indeed, it follows from the explicit formula \eqref{4.22} for $C$
that
\begin{equation}
Ch_{\zeta,u}={\bf s}\left(\sum_{j=1}^d 
\zeta_j{\mathbb K}_j(\cdot,\zeta)u\right)(0)
=\sum_{j=1}^{d}\zeta_j\Psi_j(0,\zeta) u =S(\zeta)u-S(0)u\label{4.37}
\end{equation}
where the last equality is a consequence of \eqref{4.4}. By the 
reproducing kernel property,
$$
\langle {\mathbb K}_\ell(\cdot,\zeta)u, \, {\mathbb 
K}_i(\cdot,\zeta)u\rangle_{\cH({\mathbb K})}=\langle 
\Phi_{i\ell}(\zeta,\zeta)u, \, u\rangle_{\cU}
$$
for $i,\ell=1,\ldots,d$, and therefore,
\begin{equation}
\left\|h_{\zeta,u}\right\|_{\cH({\mathbb K})}^2=
\left\|\sum_{j=1}^d
\zeta_j{\mathbb K}_j(\cdot,\zeta)u\right\|_{\cH({\mathbb K})}^2
=\sum_{i,\ell=1}^d
\overline{\zeta}_i\zeta_\ell\Phi_{i\ell}(\zeta,\zeta).
\label{4.38}
\end{equation}
Making use of \eqref{4.24} (which holds by Proposition \ref{P:4.2})
and 
of \eqref{4.30} we have
\begin{align}
\left\|Ah_{\zeta,u}\right\|_{\cH({\mathbb K})^d}^2&= 
\left\|{\mathbb T}(\cdot,\zeta)u-{\mathbb T}(\cdot,0)u
\right\|_{\cH({\mathbb K})^d}^2\notag\\
&=\sum_{j=1}^d\left\langle \left(\Phi_{jj}(\zeta,\zeta)-
\Phi_{jj}(\zeta,0)-\Phi_{jj}(0,\zeta)+\Phi_{jj}(0,0)\right)u, 
\, u\right\rangle_{\cU}. \label{4.39}
\end{align}
Observe that by \eqref{2.11},
\begin{align*}
&\sum_{j=1}^{d}\left(\Phi_{jj}(\zeta,\zeta)-
\Phi_{jj}(\zeta,0)-\Phi_{jj}(0,\zeta)+\Phi_{jj}(0,0)\right)
-\sum_{i,\ell=1}^{d}\overline{\zeta}_i\zeta_\ell
\Phi_{i\ell}(\zeta,\zeta)\notag\\
&=I_{\cU}-S(\zeta)^*S(\zeta)-
(I_{\cU}-S(\zeta)^*S(0))-
(I_{\cU}-S(0)^*S(\zeta))+I_{\cU}-S(0)^*S(0)\notag\\
&=-(S(\zeta)^*-S(0)^*)(S(\zeta)-S(0)).
\end{align*}
Subtracting \eqref{4.39} from \eqref{4.38} and taking into account
the 
last
identity we get
$$
\left\|h_{\zeta,u}\right\|^2-\left\|Ah_{\zeta,u}\right\|^2=
\left\|S(\zeta)u-S(0)u\right\|^2_{\cY}
$$
which proves \eqref{4.36}, due to \eqref{4.37}. Writing \eqref{4.36}
in 
the form
$$
\left\langle (I-A^*A-C^*C)h_{\zeta,u}, \, h_{\zeta,u}
\right\rangle_{\cH({\mathbb K})^p}=0
$$
and observing that the operator $I-A^*A-C^*C$ is
positive semidefinite (since $\bU$ is contractive by Definition
\ref{D:4.1}), we conclude that
\begin{equation}
(I-A^*A-C^*C)h_{\zeta,u}\equiv 
0\quad\mbox{for all}\quad
\zeta\in\B^d, \, u\in\cU.
\label{4.40} 
\end{equation}
Applying the operator $C^*$ to both parts of \eqref{4.37} we get
\begin{equation}
C^*Ch_{\zeta,u}={\mathbb K}_0(\cdot,0)\left(S(\zeta)-S(0)\right)u 
\label{4.41}
\end{equation}
by the explicit formula \eqref{4.25} for $C^*$. From the same formula 
and the formula \eqref{4.22} for $D$ we get
\begin{equation}
C^*Du=C^*S(0)^*u={\mathbb K}_0(\cdot,0)S(0)u.
\label{4.42}
\end{equation}
We next apply the operator $A^*$ to both parts of equality
\eqref{4.24} to 
get
$$
A^*Ah_{\zeta,u}= 
A^*{\mathbb T}(\cdot,\zeta)u-A^*{\mathbb T}(\cdot,0)u.
$$
Due to the second formula in \eqref{4.25}
(which holds by Proposition \ref{P:4.2}) the latter equality can be
written as
\begin{equation}
A^*{\mathbb T}(\cdot,\zeta)u=A^*Ah_{\zeta,u}+A^*Bu.   
\label{4.43}
\end{equation}
Since $\bU$ is contractive (by Definition \ref{D:4.1}) and 
since $B$ and $D$ satisfy the second equality in \eqref{4.26},
it then follows that $A^*B+C^*D=0$. Thus,
$$
A^*Bu=-C^*Du=-C^*S(0)^*u=-{\mathbb K}_0(\cdot,0)S(0)u.
$$
Taking the latter equality into account and making subsequent use of  
\eqref{4.40}--\eqref{4.42}  we then get from 
\eqref{4.43}
\begin{align*}
A^*{\mathbb T}(\cdot,\zeta)u&=(I-C^*C)h_{\zeta,u}-C^*Du\\ 
&=h_{\zeta,u}-
{\mathbb K}_0(\cdot,0)\left(S(\zeta)-S(0)\right)u 
-{\mathbb K}_0(\cdot,0)S(0)u \\
&=\sum_{j=1}^d \zeta_j   
{\mathbb K}_j(\cdot,\zeta)u-{\mathbb K}_0(\cdot,0)S(\zeta)u
\end{align*}
which completes the proof of \eqref{4.35}.
\end{proof}
\begin{remark}
{\rm Since any $\tcfm$ colligation is contractive, we have in
particular
that $AA^*+BB^*\le I$. Therefore, formulas \eqref{4.27}, \eqref{4.28}
and 
\eqref{4.35}, \eqref{4.23} defining the action of operators $B^*$ and 
$A^*$ on elementary kernels of the space ${\mathcal D}$ (see
\eqref{4.13})
can be extended by continuity to define these operators on the whole 
space ${\mathcal D}$.}
\label{R:4.3a}
\end{remark}

\begin{proposition}
Any $\tcfm$ colligation ${\bf U}=\sbm {A & B \\ C & D}$ associated
with a 
fixed Agler decomposition \eqref{4.3} of a given $S\in\cS_d(\cU, 
\, \cY)$ is weakly unitary and closely connected. Furthermore, 
\begin{equation}
S(z)=D+C(I-Z_{\cH({\mathbb K})}(z)A)^{-1}Z_{\cH({\mathbb K})}(z)B.
\label{4.44}
\end{equation}
\label{P:4.4}
\end{proposition}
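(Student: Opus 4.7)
The plan is to recognize that Propositions \ref{P:4.2} and \ref{P:4.3}, together with the formulas \eqref{4.25}--\eqref{4.28}, already encode the identity $\bU^{*}\vert_{\cD_{V}} = V$ for the isometry $V$ of Lemma \ref{L:3.2}. Once this identification is made, the weakly coisometric property drops out directly; the weakly isometric property follows by a partial-isometry argument; close connectedness follows by identifying the observability and controllability subspaces as spans of reproducing-kernel sections; and \eqref{4.44} is a rewriting of \eqref{4.37}.

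As preparation, I would recognize $\mathbb{K}_{0}(\cdot,\zeta) y$ and $\mathbb{T}(\cdot,\zeta) u$ as resolvent images of $C^{*} y$ and $B u$. Combining \eqref{4.23} with $C^{*} y = \mathbb{K}_{0}(\cdot,0) y$ gives $(I - A^{*} Z_{\cH(\mathbb{K})}(\zeta)^{*}) \mathbb{K}_{0}(\cdot,\zeta) y = C^{*} y$, and dually \eqref{4.24} with $B u = \mathbb{T}(\cdot,0) u$ gives $(I - A Z_{\cH(\mathbb{K})}(\zeta)) \mathbb{T}(\cdot,\zeta) u = B u$, so that $\mathbb{K}_{0}(\cdot,\zeta) y = (I - A^{*} Z_{\cH(\mathbb{K})}(\zeta)^{*})^{-1} C^{*} y$ and $\mathbb{T}(\cdot,\zeta) u = (I - A Z_{\cH(\mathbb{K})}(\zeta))^{-1} B u$. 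Consequently $\cH^{\mathcal{O}}_{C,A} = \bigvee_{\zeta,y} \mathbb{K}_{0}(\cdot,\zeta) y$, $\cH^{\mathcal{C}}_{A,B} = \bigvee_{j,\zeta,u} \mathbb{K}_{j}(\cdot,\zeta) u$, $\cD_{C,A} \subseteq \cD$ and $\widetilde{\cD}_{A,B} \subseteq \cR$ for the subspaces $\cD, \cR$ of \eqref{4.13}--\eqref{4.14}. Since the union of these two spans already contains every reproducing kernel of $\cH(\mathbb{K})$, we conclude $\cH^{\mathcal{O}}_{C,A} \vee \cH^{\mathcal{C}}_{A,B} = \cH(\mathbb{K})$: the colligation is closely connected.

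Next I would apply $\bU^{*} = \sbm{A^{*} & C^{*} \\ B^{*} & D^{*}}$ to each of the two families of generators of $\cD_{V}$ listed in Lemma \ref{L:3.2}. For $\sbm{Z_{\cH(\mathbb{K})}(\zeta)^{*} \otimes \mathbb{K}_{0}(\cdot,\zeta) y \\ y}$, the first block telescopes via \eqref{4.23} and \eqref{4.25} to $\mathbb{K}_{0}(\cdot,\zeta) y$, and the second block via \eqref{4.27} together with $D^{*} y = S(0)^{*} y$ to $S(\zeta)^{*} y$. For $\sbm{\mathbb{T}(\cdot,\zeta) u \\ S(\zeta) u}$, \eqref{4.35} and \eqref{4.25} produce $\sum_{j} \zeta_{j} \mathbb{K}_{j}(\cdot,\zeta) u$, while \eqref{4.28} together with $D^{*} S(\zeta) u = S(0)^{*} S(\zeta) u$ produces $u$. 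Matching with \eqref{4.11} gives $\bU^{*}\vert_{\cD_{V}} = V$, so $\bU^{*}$ is isometric on $\cD \oplus \cY \supseteq \cD_{C,A} \oplus \cY$: weakly coisometric. Because $V$ is surjective onto $\cR_{V} = \cR \oplus \cU$ and $\|\bU\| \le 1$, the short Cauchy--Schwarz argument ``for $w \in \cD_{V}$, $\langle \bU V w, w \rangle = \|V w\|^{2}$ and $\|\bU V w\| \le \|V w\|$ jointly force $\bU V w = w$'' gives $\bU\vert_{\cR_{V}} = V^{-1}$, so $\bU$ is isometric on $\cR \oplus \cU \supseteq \widetilde{\cD}_{A,B} \oplus \cU$: weakly isometric and hence weakly unitary.

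Finally, \eqref{4.37} says $(S(\zeta) - D) u = C h_{\zeta,u}$, and I rewrite $h_{\zeta,u} = \sum_{j} \zeta_{j} \mathbb{K}_{j}(\cdot,\zeta) u$ as $Z_{\cH(\mathbb{K})}(\zeta) \mathbb{T}(\cdot,\zeta) u = Z_{\cH(\mathbb{K})}(\zeta) (I - A Z_{\cH(\mathbb{K})}(\zeta))^{-1} B u$ using the resolvent identification from step one; the standard identity $Z(\zeta)(I - A Z(\zeta))^{-1} = (I - Z(\zeta) A)^{-1} Z(\zeta)$ then delivers \eqref{4.44}. The step I expect to demand the most care is the partial-isometry passage from $\bU^{*}$ on $\cD_{V}$ to $\bU$ on $\cR_{V}$, but the Cauchy--Schwarz maneuver just outlined handles it without further hypotheses.
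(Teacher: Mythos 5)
Your proposal is correct, and the overall skeleton (deriving the resolvent identities $\mathbb{K}_0(\cdot,\zeta)y=(I-A^*Z_{\cH(\mathbb{K})}(\zeta)^*)^{-1}C^*y$ and $\mathbb{T}(\cdot,\zeta)u=(I-AZ_{\cH(\mathbb{K})}(\zeta))^{-1}Bu$ from \eqref{4.23}--\eqref{4.25}, reading off closely-connectedness from the span of the $\mathbb{K}_j(\cdot,\zeta)$ sections, and getting \eqref{4.44} from \eqref{4.27} and the resolvent identity) matches the paper closely. Where you genuinely diverge is in the verification of weak isometry. The paper proves weak coisometry and weak isometry symmetrically and directly: in each case it specializes the norm identity \eqref{4.10} to see that the relevant input and output vectors have the same norm, and then checks explicitly (top component automatic, bottom component from \eqref{4.44} or \eqref{4.47}) that $\bU$ (resp.\ $\bU^*$) sends the one to the other. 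You instead first assemble the full statement $\bU^*\vert_{\cD_V}=V$ (in effect proving the ``only if'' direction of Lemma \ref{L:1} ahead of its place in the paper), then deduce $\bU\vert_{\cR_V}=V^{-1}$ by the Cauchy--Schwarz equality argument, using only contractivity of $\bU$ and the fact that $V$ is a unitary from $\cD_V$ onto $\cR_V$. That is a more abstract route: it buys you the general principle that a contraction whose adjoint restricts to a unitary on a subspace must itself restrict to the inverse unitary on the image, so you need not re-run the direct computation on the isometric side. The paper's version is more hands-on and makes the two verifications visibly parallel. Both are complete; just make sure you note the containments $\cD_{C,A}\subseteq\cD$ and $\widetilde{\cD}_{A,B}\subseteq\cR$ explicitly (which you do) since the weak-coisometry and weak-isometry definitions in Definition \ref{D:1.5} refer to $\cD_{C,A}$ and $\widetilde{\cD}_{A,B}$, not to $\cD$ and $\cR$.
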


\begin{proof} Let ${\bf U}=\sbm {A & B \\ C & D}$ be a 
$\tcfm$ colligation of $S$ associated with a fixed
Agler decomposition \eqref{4.3}. Then equalities  
\eqref{4.23}--\eqref{4.25} hold by Proposition 
\ref{P:4.2}. Upon representing the left hand side expressions 
in \eqref{4.23}, \eqref{4.25} as 
$$
A^*Z_{\cH({\mathbb K})}(\zeta)^*{\mathbb K}_0(\cdot, 
\zeta)y\quad\mbox{and}\quad AZ_{\cH({\mathbb
K})}(\zeta){\mathbb T}(\cdot,\zeta)u
$$ 
respectively and replacing $\zeta$ by $z$, we then solve the 
system \eqref{4.23}--\eqref{4.25} for ${\mathbb
T}(\cdot,z)y$ and $\widetilde{\mathbb T}(\cdot,z)u$ as follows:
\begin{align}
{\mathbb K}_0(\cdot,z)y&=
(I-A^*Z_{\cH({\mathbb K})}(z)^*)^{-1}{\mathbb K}_0(\cdot,0)y
=(I-A^*Z_{\cH({\mathbb K})}(z)^*)^{-1}C^*y,\label{4.45}\\
{\mathbb T}(\cdot,z)u&=
(I-AZ_{\cH({\mathbb K})}(z))^{-1}{\mathbb T}(\cdot,0)u
=(I-AZ_{\cH({\mathbb K})}(z))^{-1}Bu.\label{4.46}
\end{align}
From \eqref{4.45} and \eqref{4.27} we conclude that equalities
\begin{align}
&(D^*+B^*Z_{\cH({\mathbb K})}(z)^*(I-A^*Z_{\cH({\mathbb 
K})}(z)^*)^{-1}C^*)y\label{4.47}\\
&\quad=S(0)^*y+B^*Z_{\cH({\mathbb K})}(z)^*{\mathbb K}_0(\cdot,z)y\notag\\
&\quad=S(0)^*y+S(z)^*y-S(0)^*y=S(z)^*y\notag
\end{align}
hold for every $z\in\B^d$ and $y\in\cY$, which proves representation 
\eqref{4.44}. Furthermore, in view of \eqref{4.8} and \eqref{4.9},
\begin{align*}
\cH^{\mathcal O}_{C,A}&:=\bigvee\left\{
(I-A^*Z_{\cH({\mathbb K})}(z)^*)^{-1}C^*y: \; z\in \B^d, \; 
y\in\cY\right\}\notag\\
&=\bigvee\left\{{\mathbb K}_0(\cdot,z)y: 
\, z\in \B^d, \; y\in\cY\right\},\\
\cH^{\mathcal C}_{A,B}&:=\bigvee\left\{{\mathcal
I}_{j}^{*}(I-AZ_{\cH({\mathbb K})}(z))^{-1}Bu: \; z\in \B^d, \; u\in\cU,
\; 
j=1,\ldots,d\right\}\notag\\
&=\bigvee\left\{{\mathcal I}_{j}^{*}{\mathbb T}(\cdot,z)u:
\, z\in \B^d, \; u\in\cU, \;  j=1,\ldots,d\right\}\notag\\
&=\bigvee\left\{{\mathbb K}_j(\cdot,z)u_j
: \; z\in \B^d, \; u_j\in\cU, \; j=1,\ldots,d\right\},
\end{align*}
and therefore, 
\begin{align*}
\cH^{\mathcal O}_{C,A}\bigvee \cH^{\mathcal C}_{A,B}&=
\bigvee\left\{{\mathbb K}_0(\cdot,z)y, \; {\mathbb 
K}_j(\cdot,z)u_j: \; z\in \B^d, \; y\in\cY, \,  
u_j\in\cU, \, j=1,\ldots,d\right\}\\
&=\bigvee\left\{{\mathbb K}(\cdot,z)\begin{bmatrix}y \\
{\bf u}\end{bmatrix}: \; z\in \B^d, \; \begin{bmatrix}y \\
{\bf u}\end{bmatrix}\in\cY\oplus\cU^d\right\}=\cH({\mathbb K})
\end{align*}
where the last equality follows by  the very construction of the 
reproducing kernel Hilbert space. The colligation ${\bf U}=\sbm {A &
B \\ 
C & D}$ is closely connected by Definition \ref{D:1.4}.
To show that $\bU$ is weakly unitary, we let $u=u'=0$, $y=y'$ and 
$z=\zeta$ in \eqref{4.10} to get 
$$
\left\| \left[\begin{array}{c}
Z_{\cH({\mathbb K})}(\zeta)^*{\mathbb K}_0(\cdot,\zeta)y\\
y\end{array}\right]\right\|=\left\|\left[\begin{array}{c}
{\mathbb K}_0(\cdot,\zeta)y \\ S(\zeta)^*y\end{array}\right]
\right\|
$$
which on account of \eqref{4.45} can be written as
\begin{equation}
\left\|\left[\begin{array}{c}Z_{\cH({\mathbb 
K})}(\zeta)^*(I-A^*Z_{\cH({\mathbb 
K})}(\zeta)^*)^{-1}C^*y\\
y\end{array}\right]\right\|=\left\|\left[\begin{array}{c}
(I-A^*Z_{\cH({\mathbb K})}(\zeta)^*)^{-1}C^*y\\ 
S(\zeta)^*y\end{array}\right]\right\|.
\label{4.48}
\end{equation}
Since
$$
 \begin{bmatrix}A^* & C^* \\ B^* &
D^*\end{bmatrix} \begin{bmatrix} Z_{\cH({\mathbb K})}(\zeta)^*
(I-A^*Z_{\cH({\mathbb K})}(\zeta)^*)^{-1}C^*y \\ y\end{bmatrix}  
 =\begin{bmatrix}
(I-A^*Z_{\cH({\mathbb K})}(\zeta)^*)^{-1}C^*y \\ 
S(\zeta)^*y  \end{bmatrix}
$$
(the top components in the latter formula are equal automatically
whereas 
the bottom components are equal due to \eqref{4.47}),
equality \eqref{4.48} tells us that $\bU$ is weakly 
coisometric by Definition \ref{D:1.4}. Similarly letting
$u=u'$ and $y=y'=0$ in \eqref{4.10} we get
$$
\left\| \begin{bmatrix}
{\mathbb T}(\cdot,\zeta)u \\ S(\zeta)u \end{bmatrix}\right\|=
\left\|\begin{bmatrix} Z_{\cH({\mathbb K})}(\zeta){\mathbb
T}(\cdot,\zeta)u
\\ u \end{bmatrix}\right\|
$$
which in view of \eqref{4.46} can be written as
$$
\left\|\left[\begin{array}{c}(I-AZ_{\cH({\mathbb K})}(\zeta))^{-1}Bu\\
S(\zeta)u\end{array}\right]\right\|=\left\|\left[\begin{array}{c}
Z_{\cH({\mathbb K})}(\zeta)(I-AZ_{\cH({\mathbb K})}(\zeta))^{-1}Bu\\ 
u\end{array}\right]\right\|
$$
and since 
$$
\begin{bmatrix}A & B \\ C & D\end{bmatrix}\left[\begin{array}{c}
Z_{\cH({\mathbb K})}(\zeta)(I-AZ_{\cH({\mathbb K})}(\zeta))^{-1}Bu\\ 
u\end{array}\right]=\left[\begin{array}{c}
(I-AZ_{\cH({\mathbb K})}(\zeta))^{-1}Bu\\ S(\zeta)u\end{array}\right]
$$
(again, the top components are equal automatically 
and the bottom components are equal due to \eqref{4.44}),
the colligation $\bU$ is weakly isometric by Definition \ref{D:1.4}.
\end{proof}

\medskip

Proposition \ref{P:4.4} establishes common features of $\tcfm$ 
colligations leaving the question about the existence of at least one 
such colligation open. 

\begin{lemma}
Given an Agler decomposition ${\mathbb K}$ for a function
$S\in{\mathcal {S}}_d(\cU, \cY)$, let
$V$ be the isometric operator associated with this decomposition as
in \eqref{4.11}. A block-operator matrix $\bU=\sbm {A & B \\ C & D}$
of the form \eqref{4.19}
is a $\tcfm$ colligation associated with  ${\mathbb K}$ if and only if
\begin{equation}
\|\bU^*\|\le 1,\quad \bU^*\vert_{{\mathcal 
D}\oplus\cY}=V\quad\mbox{and}\quad
B^*\vert_{{\mathcal D}^\perp}=0,
\label{4.49}
\end{equation}
that is, $\bU^*$  is a contractive extension of $V$ from ${\mathcal
D}\oplus \cY$ to all of $\cH({\mathbb K})^d\oplus\cY$ subject to the
condition $B^*\vert_{{\mathcal D}^\perp}=0$.
\label{L:1}
\end{lemma}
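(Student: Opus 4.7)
The plan is to show, in both directions, that $\bU^*$ acts on the two families of spanning vectors $\sbm{Z_{\cH({\mathbb K})}(\zeta)^*{\mathbb K}_0(\cdot,\zeta)y \\ y}$ and $\sbm{{\mathbb T}(\cdot,\zeta)u \\ S(\zeta)u}$ of ${\mathcal D}\oplus\cY$ exactly as $V$ prescribes in \eqref{4.11}.  Via Proposition \ref{P:4.2} and Proposition \ref{P:4.3}, the $\tcfm$ conditions of Definition \ref{D:4.1} are equivalent to the identities \eqref{4.23}, \eqref{4.24}, \eqref{4.25}, together with the companion formulas \eqref{4.27}, \eqref{4.28}, \eqref{4.35}; these encode precisely the same data as $\bU^*|_{{\mathcal D}\oplus\cY}=V$, so the lemma will reduce to a careful repackaging.

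For the forward direction, I would start from a $\tcfm$ colligation $\bU$, which by Definition \ref{D:4.1} is contractive.  Using the already-established formulas for $A^*$, $B^*$, $C^*$, $D^*$ on elementary kernels, I would assemble
$$
\bU^*\sbm{Z_{\cH({\mathbb K})}(\zeta)^*{\mathbb K}_0(\cdot,\zeta)y \\ y} = \sbm{{\mathbb K}_0(\cdot,\zeta)y \\ S(\zeta)^*y},\quad
\bU^*\sbm{{\mathbb T}(\cdot,\zeta)u \\ S(\zeta)u} = \sbm{\sum_j\zeta_j{\mathbb K}_j(\cdot,\zeta)u \\ u},
$$
by invoking \eqref{4.23}, \eqref{4.25}, \eqref{4.27} for the first vector and \eqref{4.35}, \eqref{4.28}, \eqref{4.25} for the second, which together prove $\bU^*|_{{\mathcal D}\oplus\cY}=V$.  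The condition $B^*|_{{\mathcal D}^\perp}=0$ then follows from the formula $B^*g=(\widetilde{\bf s}g)(0)$ in \eqref{4.22} combined with the vanishing of $\widetilde{\bf s}g$ on ${\mathcal D}^\perp$, visible from \eqref{4.15}.

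For the reverse direction, I would begin by specializing the identity $\bU^*|_{{\mathcal D}\oplus\cY}=V$: choosing $\zeta=0$ and then $u=0$ or $y=0$ separately in \eqref{4.11} reads off $C^*y={\mathbb K}_0(\cdot,0)y$, $D^*y=S(0)^*y$, the $A^*$-formula \eqref{4.23}, and the formulas \eqref{4.27}, \eqref{4.28}, \eqref{4.35} on the generators of ${\mathcal D}$.  Using \eqref{4.34} together with the reproducing-kernel identities \eqref{4.18} one checks that $B^*g=(\widetilde{\bf s}g)(0)$ on ${\mathcal D}$; combined with the hypothesis $B^*|_{{\mathcal D}^\perp}=0$ and the vanishing of $\widetilde{\bf s}g$ on ${\mathcal D}^\perp$, this extends to all of $\cH({\mathbb K})^d$, so taking adjoints via \eqref{4.18} gives $Bu={\mathbb T}(\cdot,0)u$.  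To obtain the remaining identity \eqref{4.24}, I would exploit that $V$ is isometric on ${\mathcal D}\oplus\cY$ and $\bU$ is contractive, which together force $\bU\bU^*v=v$ for $v\in{\mathcal D}\oplus\cY$; applying this to $v=\sbm{{\mathbb T}(\cdot,\zeta)u \\ S(\zeta)u}$ yields $A\bigl(\sum_j\zeta_j{\mathbb K}_j(\cdot,\zeta)u\bigr)+Bu={\mathbb T}(\cdot,\zeta)u$, which combined with the $Bu$-formula is exactly \eqref{4.24}.  Proposition \ref{P:4.2} then converts \eqref{4.23}, \eqref{4.24}, \eqref{4.25} back into the $\tcfm$ conditions.

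The principal obstacle is the reverse-direction deduction $Bu={\mathbb T}(\cdot,0)u$: the data $\bU^*|_{{\mathcal D}\oplus\cY}=V$ pins down $B^*$ only on ${\mathcal D}$, and a generic contractive extension of $V$ could perturb both $A^*$ and $B^*$ on ${\mathcal D}^\perp$ subject only to contractivity, destroying the canonical formula for $B$ in \eqref{4.25}.  The extra hypothesis $B^*|_{{\mathcal D}^\perp}=0$ is precisely what kills this ambiguity, and without it the reverse implication would be out of reach.
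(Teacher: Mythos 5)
Your proposal is correct and both directions are sound, but the reverse direction takes a genuinely different (and cleaner) route for the hardest step, namely establishing \eqref{4.24}. In the paper's proof, after extracting \eqref{4.60} one re-derives the norm identity \eqref{4.61} by a direct and fairly lengthy expansion in terms of $S$, then invokes positivity of $I-AA^*-BB^*$ to get \eqref{4.65}, computes \eqref{4.66} using $AC^*+BD^*=0$, and chains these together algebraically. Your approach instead exploits a standard operator-theory fact: if a contraction $X$ satisfies $\|Xw\|=\|w\|$, then $X^*Xw=w$ (Cauchy--Schwarz with equality). Applied with $X=\bU^*$ and $w\in\cD\oplus\cY$, using that $\bU^*\vert_{\cD\oplus\cY}=V$ is isometric by Lemma \ref{L:3.2}, this yields $\bU\bU^*v=v$ on $\cD\oplus\cY$ for free; reading off the top component for $v=\sbm{{\mathbb T}(\cdot,\zeta)u\\ S(\zeta)u}$ and substituting $Bu={\mathbb T}(\cdot,0)u$ gives \eqref{4.24} at once. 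This bypasses the paper's computational re-derivation of an isometry that is already in hand, which is a real simplification. The intermediate identification $Bu={\mathbb T}(\cdot,0)u$ is handled correctly in your write-up: \eqref{4.27}, \eqref{4.28}, together with \eqref{4.34} and the reproducing-kernel relation \eqref{4.18}, show $B^*g=(\widetilde{\bf s}g)(0)$ on $\cD$, while the hypothesis $B^*\vert_{\cD^\perp}=0$ and the description \eqref{4.15} of $\cD^\perp$ handle the orthogonal complement; the adjoint relation then gives $B$. The forward direction and the appeal to Proposition \ref{P:4.2} to translate \eqref{4.23}--\eqref{4.25} back into the $\tcfm$ conditions are exactly as in the paper.
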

\begin{proof} Let us write the isometry $V$ from \eqref{4.11} in the
form
\begin{equation}
V=\begin{bmatrix}A_V & B_V \\ C_V & D_V\end{bmatrix}: \;
\begin{bmatrix}
{\mathcal D} \\ \cY\end{bmatrix}\to \begin{bmatrix}
{\mathcal R} \\ \cU\end{bmatrix}.
\label{4.50}  
\end{equation}
Then we get from \eqref{4.11} the following relations for the block 
entries $A_V$, $B_V$,  $C_V$, $D_V$:
\begin{align}
A_V\left({\bQ}(\zeta)^*\otimes{\mathbb
K}_0(\cdot,\zeta)y\right)+B_Vy&=
{\mathbb K}_{0}(\cdot,\zeta)y,\label{4.51}\\
A_V{\mathbb T}(\cdot,\zeta)u
+B_VS(\zeta)u&=\sum_{j=1}^d\zeta_j{\mathbb 
K}_j(\cdot,\zeta)u,\label{4.52}\\
C_V\left({\bQ}(\zeta)^*\otimes{\mathbb 
K}_0(\cdot,\zeta)y\right)+D_Vy&=S(\zeta)^*y,
\label{4.53}\\
C_V{\mathbb T}(\cdot,\zeta)u +D_VS(\bar{\zeta})u&=u.\label{4.54}
\end{align}
Indeed, equalities \eqref{4.51} and \eqref{4.52} are obtained upon 
equating the top components in \eqref{4.11} for the respective 
special cases $u=0$ and $y=0$.
Equalities \eqref{4.53} and \eqref{4.54} are obtained similarly upon
equating the bottom components in \eqref{4.11}. Letting $\zeta=0$ in
\eqref{4.51} and \eqref{4.53} gives
\begin{equation}
B_Vy={\mathbb K}_0(\cdot,0)y\quad\mbox{and}\quad D_Vy=S(0)^*y.
\label{4.55}
\end{equation}
Substituting the first and the second formula in \eqref{4.55}
respectively into \eqref{4.51}, \eqref{4.52} and into \eqref{4.53} 
and \eqref{4.54} results in equalities
\begin{align}
A_V\colon & {\bQ}(\zeta)^*\otimes{\mathbb K}_0(\cdot,\zeta)y\to
{\mathbb K}_0(\cdot,\zeta)y-{\mathbb K}_0(\cdot,0)y,\label{4.56}\\
A_V\colon & {\mathbb T}(\cdot,\zeta)u\to \sum_{j=1}^d\zeta_j
{\mathbb K}_j(\cdot,\zeta)u
-{\mathbb K}_0(\cdot,0)S(\zeta)u,\label{4.57}\\
C_V\colon & {\bQ}(\zeta)^*\otimes{\mathbb K}_0(\cdot,\zeta)y
\to S(\zeta)^*y-S(0)^*y,\label{4.58}\\
C_V\colon & {\mathbb T}(\cdot,\zeta)u\to u-S(0)^*S(\bar{\zeta})u
\label{4.59}
\end{align}
holding for all $\zeta\in\B^d$, $u\in\cU$ and $y\in\cY$ and 
completely defining the operators $A_V$ and $C_V$ on the whole space 
${\mathcal D}$.

\smallskip

Let $\bU=\sbm {A & B \\ C & D}$ be a $\tcfm$ colligation associated
with 
${\mathbb K}$. Then $\bU$ is contractive by definition and 
relations \eqref{4.23}--\eqref{4.25} and \eqref{4.35} hold 
by Propositions \ref{P:4.2} and \ref{P:4.3}. Comparing \eqref{4.23}
and 
\eqref{4.35} with \eqref{4.56}, \eqref{4.57} we see that
$A^*|_{\mathcal 
D}=A_V$. Comparing \eqref{4.27}, \eqref{4.28} with \eqref{4.58}, 
\eqref{4.59} we conclude that $B^*|_{\mathcal D}=C_V$. Also, it 
follows from \eqref{4.25} and \eqref{4.55} that $C^*=B_V$ and
$D^*=D_V$. 
Finally, the second of formulas \eqref{4.22} combined with the 
characterization  \eqref{4.15} of $\cD^{\perp}$ enables us to see 
that $B^*f=\widetilde{\bf s}f=0$ for every $f\in{\mathcal D}^\perp$.
The last equality in \eqref{4.49} now follows.

\smallskip

Conversely, let us assume that a colligation $\bU=\sbm {A & B \\ C &
D}$  
meets all the conditions in \eqref{4.49}. From the second relation in 
\eqref{4.49} we conclude the equalities \eqref{4.55}--\eqref{4.59}
hold 
with operators 
$A_V$, $B_V$, $C_V$ and $D_V$ replaced by $A^*$, $C^*$. $B^*$ and
$D^*$  
respectively. In other words, we conclude from \eqref{4.55} that
$C^*$ 
and $D^*$ are defined exactly as in \eqref{4.25} which means (by 
Proposition \ref{P:4.2}) that they are already of the requisite form. 
Equalities \eqref{4.58}, \eqref{4.59} tell us that the operator $B^*$ 
satisfies formulas  \eqref{4.27}, \eqref{4.28}. As we have seen in
the proof of Proposition \ref{P:4.3}, these formulas agree with the
second 
formula in \eqref{4.22} and then define $B^*$ on the whole space
$\cH({\mathbb K})^d$. From the third condition in \eqref{4.49} 
we now conclude that $B^*$ is defined 
by formula  \eqref{4.22} on the whole space $\cH({\mathbb K})^d$ and 
therefore, $B$ is also of the requisite form. The formula
\eqref{4.56} 
(with $A^*$ instead of $A_V$) leads us to \eqref{4.23} which means
that 
$A$ solves the Gleason problem \eqref{4.20}. 

\smallskip

To complete the proof, it remains to show that $A^*$ solves the dual 
Gleason problem \eqref{4.21} or equivalently, that \eqref{4.24}
holds. 
Rather than \eqref{4.24}, what we know is the equality \eqref{4.52} 
(with $A^*$ and $C^*$ instead of $A_V$ and $B_V$ respectively):
\begin{equation}
A^*{\mathbb T}(\cdot,\zeta)u=\sum_{j=1}^d\zeta_j
{\mathbb K}_j(\cdot,\zeta)u-C^*S(\zeta)u. \label{4.60}
\end{equation}
We use \eqref{4.60} to show that equality  
\begin{equation}
\left\|{\mathbb T}(\cdot,\zeta)u\right\|_{\cH({\mathbb K})^d}^2
-\left\|A^*{\mathbb T}(\cdot,\zeta)u
\right\|_{\cH({\mathbb K})}^2=\left\|B^*{\mathbb T}(\cdot,\zeta)u
\right\|_{\cU}^2
\label{4.61}  
\end{equation}
holds for every $\zeta\in\B^d$ and $u\in\cU$. Indeed, it follows from 
\eqref{4.60} that
\begin{align}
\left\|{\mathbb T}(\cdot,\zeta)u\right\|^2-\left\|A^*{\mathbb 
T}(\cdot,\zeta)u\right\|^2=& \|{\mathbb 
T}(\cdot,\zeta)u\|^2-\|
\sum_{j=1}^d\zeta_j {\mathbb K}_j(\cdot,\zeta)u
-C^*S(\zeta)u\|^2\notag \\
=&\left\|{\mathbb T}(\cdot,\zeta)u\right\|^2
-\|\sum_{j=1}^d\zeta_j{\mathbb K}_j(\cdot,\zeta)u\|^2-
\left\|C^*S(\zeta)u\right\|^2\notag \\
&-2\Re\left\langle C\left(\sum_{j=1}^d\zeta_j{\mathbb 
K}_j(\cdot,\zeta)u\right), \, 
S(\zeta)u\right\rangle.
\label{4.62} 
\end{align}
We next express all the terms on the right of \eqref{4.62} in terms
of 
the function $S$: 
\begin{align*}
\left\|{\mathbb T}(\cdot,\zeta)u\right\|^2-\|
\sum_{j=1}^d\zeta_j{\mathbb K}_j(\cdot,\zeta)u\|^2&
=\left\langle (I_{\cU}-S(\zeta)^*S(\zeta))u, \,
u\right\rangle,\\
\left\langle C\left(\sum_{j=1}^d\zeta_j{\mathbb
K}_j(\cdot,\zeta)u\right), 
\, S(\zeta)u\right\rangle&
=\left\langle S(\zeta)^*(S(\zeta)-S(0))u, \,u 
\right\rangle,\\ \left\|C^*S(\zeta)u\right\|^2&=\|S(\zeta)u\|^2-
\|S(0)^*S(\zeta)u\|^2.
\end{align*}
We mention that the first equality follows from \eqref{4.30},
\eqref{4.38} 
and  \eqref{2.11}; the second equality is a consequence of 
\eqref{4.37}; the third equality is obtained upon letting
$y=S(\zeta)u$ in 
\eqref{4.31}.
We now substitute the three last equalities into \eqref{4.62} to get 
\begin{equation}
\left\|{\mathbb T}(\cdot,\zeta)u\right\|_{\cH({\mathbb K})^d}^2
-\left\|A^*{\mathbb T}(\cdot,\zeta)u\right\|_{\cH({\mathbb K})}^2
=\left\langle R(\zeta)u,\, u\right\rangle_{\cU}
\label{4.63}
\end{equation}
where 
\begin{eqnarray*}
R(\zeta)&=&I_{\cU}-S(\zeta)^*S(\zeta)+
S(\zeta)^*\left(S(\zeta)-S(0)\right)\\
&&+\left(S(\zeta)^*-S(0)^*\right)S(\zeta)-S(\zeta)^*S(\zeta)+
S(\zeta)^*S(0)S(0)^* S(\zeta)\\
&=& I_{\cU}-S(\zeta)^*S(0)-S(0)^*S(\zeta)+
S(\zeta)^*S(0)S(0)^*S(\zeta)\\
&=&\left(I_\cU-S(\zeta)^*S(0)\right)\left(I_\cU-S(0)^*S(\zeta)\right).
\end{eqnarray*}
By \eqref{4.28} we have
\begin{equation}
B^*{\mathbb T}(\cdot,\zeta)u=u-S(0)^*S(\zeta)u
\label{4.64}
\end{equation}
and therefore
$$
\left\|B^*{\mathbb T}(\cdot,\zeta)u\right\|_{\cU}^2=
\left\|u-S(0)^*S(\zeta)u\right\|^2=
\left\langle R(\zeta)u,\, u\right\rangle_{\cU},
$$
which together with \eqref{4.63} completes the proof of \eqref{4.61}.
Writing \eqref{4.61} as
$$
\langle (I-AA^*-BB^*){\mathbb T}(\cdot,\zeta)u, \, 
{\mathbb T}(\cdot,\zeta)u\rangle=0
$$
and observing that the operator $I-AA^*-BB^*$ is positive
semidefinite 
(since ${\bf U}=\sbm {A&B \\ C & D}$ is a contraction), we conclude
that 
\begin{equation}
(I-AA^*-BB^*){\mathbb T}(\cdot,\zeta)u=0\quad\mbox{for all} 
\quad \zeta\in\B^d, \, u\in\cU.
\label{4.65}  
\end{equation}
Since the operators $C$ and $D$ satisfy the first equality
\eqref{4.26}
and since  ${\bf U}=\sbm {A&B \\ C & D}$ is a contraction, 
necessarily we have 
$AC^*+BD^*=0$. We now combine this latter equality with 
\eqref{4.64} and formula \eqref{4.25} for $D^*$ to get 
\begin{align}
{\mathbb T}(\cdot,0)u=Bu&=B(B^*{\mathbb T}(\cdot,\zeta)u+
S(0)^*S(\zeta)u)\notag\\ &=
BB^*{\mathbb T}(\cdot,\zeta)u+BD^*S(\zeta)u\notag\\
&=BB^*{\mathbb T}(\cdot,\zeta)u-AC^*S(\zeta)u.
\label{4.66}
\end{align}
We now apply the operator $A$ to both parts of \eqref{4.60},
$$
AA^*{\mathbb T}(\cdot,\zeta)u=A\sum_{j=1}^d\zeta_j{\mathbb 
K}_j(\cdot,\zeta)u-AC^*S(\zeta)u
$$
and combine the obtained identity with \eqref{4.65} and \eqref{4.66}:
\begin{align*}
A\left(\sum_{j=1}^d\zeta_j{\mathbb
K}_j(\cdot,\zeta)u\right)&=AA^*{\mathbb
T}(\cdot,\zeta)u+AC^*S(\zeta)u\\
&={\mathbb T}(\cdot,\zeta)u-BB^*{\mathbb
T}(\cdot,\zeta)u-BD^*S(\zeta)u\\
&={\mathbb T}(\cdot,\zeta)u-{\mathbb T}(\cdot,0)u. 
\end{align*}
This completes the proof of  \eqref{4.24}.
\end{proof}

As a consequence of Lemma \ref{L:1} we get a description of all
$\tcfm$ colligations associated with a given Agler
decomposition of a function $S\in\cS_d(\cU,\cY)$.

\begin{lemma}
\label{L:2.5}                 
Let ${\mathbb K}$ be a fixed Agler decomposition of a
function  $S\in\cS_d(\cU, \cY)$. Let $V$ be the 
associated isometry defined in \eqref{4.11} with the defect spaces 
$\cD^{\perp}$ and ${\mathcal R}^\perp$ defined in \eqref{4.15}, 
\eqref{4.16}. Then all $\tcfm$ colligations associated with ${\mathbb
K}$
are of the form 
\begin{equation}
\bU^*=\begin{bmatrix} X & 0  \\ 0 & V\end{bmatrix}\colon
\begin{bmatrix}\cD^\perp \\ \cD\oplus\cY\end{bmatrix}\to
\begin{bmatrix}\cR^\perp \\ \cR\oplus\cU\end{bmatrix} 
\label{4.67}
\end{equation}
where we have identified $\begin{bmatrix}\cH({\mathbb K})^d\\
\cY\end{bmatrix}$ with $\begin{bmatrix} \cD^{\perp} \\ \cD\oplus\cY
\end{bmatrix}$ and $\begin{bmatrix}\cH({\mathbb K})\\
\cU\end{bmatrix}$ with $\begin{bmatrix} \cR^{\perp} \\ \cR\oplus\cU
\end{bmatrix}$ and where $X$ is an arbitrary contraction from 
$\cD^\perp$ into $\cR^\perp$. The colligation $\bU$ is isometric
(coisometric, unitary) if and only if $X$ is coisometric (isometric, 
unitary).
\end{lemma}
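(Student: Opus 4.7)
The plan is to apply Lemma~\ref{L:1} and to leverage the fact from Lemma~\ref{L:3.2} that $V$ is a \emph{unitary} operator from $\cD\oplus\cY$ \emph{onto} $\cR\oplus\cU$. By Lemma~\ref{L:1}, \tcfm~colligations associated with $\mathbb K$ are exactly those $\bU$ for which $\bU^{*}$ is a contractive extension of $V$ to all of $\cH(\mathbb K)^{d}\oplus\cY$ with $B^{*}|_{\cD^{\perp}}=0$. Relative to the orthogonal decompositions
$$
\cH(\mathbb K)^{d}\oplus\cY=\cD^{\perp}\oplus(\cD\oplus\cY),\qquad \cH(\mathbb K)\oplus\cU=\cR^{\perp}\oplus(\cR\oplus\cU),
$$
I would write $\bU^{*}=\bbm{X_{11} & X_{12}\\ X_{21} & X_{22}}$. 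The extension requirement $\bU^{*}|_{\cD\oplus\cY}=V$ immediately gives $X_{22}=V$, and since the range of $V$ lies in $\cR\oplus\cU$, it also forces $X_{12}=0$. Hence $\bU^{*}$ is automatically block lower triangular with $V$ in the lower right entry.

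The key step is to show that $X_{21}=0$ is automatic as well. For $\eta\in\cD^{\perp}$ and $\xi\in\cD\oplus\cY$, contractivity of $\bU^{*}$ together with $\|V\xi\|=\|\xi\|$ yields
$$
\|X_{11}\eta\|^{2}+\|X_{21}\eta+V\xi\|^{2}\le \|\eta\|^{2}+\|V\xi\|^{2}.
$$
Because $V$ maps $\cD\oplus\cY$ \emph{onto} $\cR\oplus\cU$, the vector $w:=V\xi$ is unconstrained in $\cR\oplus\cU$. Specializing to $w=-tX_{21}\eta$ with $t\in\mathbb R$ and expanding reduces the inequality to $\|X_{11}\eta\|^{2}+(1-2t)\|X_{21}\eta\|^{2}\le\|\eta\|^{2}$, and letting $t\to+\infty$ forces $X_{21}\eta=0$. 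Consequently the third condition in Lemma~\ref{L:1}, namely $B^{*}|_{\cD^{\perp}}=0$ (which is just the $\cU$-component of $X_{21}$), is satisfied automatically, and the only remaining freedom is $X:=X_{11}\in\cL(\cD^{\perp},\cR^{\perp})$ constrained by $\|X\|\le 1$. This gives the asserted form \eqref{4.67}.

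For the isometric/coisometric/unitary assertions, since $V^{*}V=I_{\cD\oplus\cY}$ and $VV^{*}=I_{\cR\oplus\cU}$ by unitarity of $V$, a direct block computation produces
$$
\bU\bU^{*}=\bbm{X^{*}X & 0\\ 0 & I},\qquad \bU^{*}\bU=\bbm{XX^{*} & 0\\ 0 & I}.
$$
Hence $\bU$ is coisometric iff $X^{*}X=I_{\cD^{\perp}}$, i.e.~$X$ is isometric; $\bU$ is isometric iff $XX^{*}=I_{\cR^{\perp}}$, i.e.~$X$ is coisometric; and $\bU$ is unitary iff $X$ is unitary. I expect the only genuinely delicate point to be the $X_{21}=0$ argument, where it is the surjectivity of $V$ (not just its isometric character) that is essential; everything else is routine block-matrix bookkeeping.
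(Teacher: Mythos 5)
Your approach is the same as the paper's: the paper's proof of this lemma is just a one-line remark that $V$ is unitary from $\cD\oplus\cY$ onto $\cR\oplus\cU$ (established in Lemma~\ref{L:3.2}), combined with Lemma~\ref{L:1}, and you are correctly filling in the block-matrix bookkeeping that this remark leaves implicit. The identification $X_{22}=V$, $X_{12}=0$ from $\bU^*|_{\cD\oplus\cY}=V$ is right, the observation that the contractivity of $\bU^*$ together with surjectivity of $V$ forces $X_{21}=0$ (making the condition $B^*|_{\cD^\perp}=0$ from Lemma~\ref{L:1} automatic) is a legitimate and tidy remark, and the final block computation of $\bU\bU^*$ and $\bU^*\bU$ correctly yields the isometric/coisometric/unitary equivalences.

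There is, however, a sign slip in the key step. From the reduced inequality
\[
\|X_{11}\eta\|^{2}+(1-2t)\|X_{21}\eta\|^{2}\le \|\eta\|^{2},
\]
letting $t\to+\infty$ makes the left-hand side tend to $-\infty$ whenever $X_{21}\eta\neq 0$, so the inequality is trivially satisfied and no contradiction is produced. The contradiction arises from $t\to-\infty$ (equivalently, set $w=+tX_{21}\eta$ and send $t\to+\infty$), where the left side diverges to $+\infty$ while the right side is fixed. With that direction corrected, the argument goes through. An alternative route that avoids the parametrization altogether: since $I-\bU\bU^*\ge 0$ and its $(2,2)$-block is $I-V^*V=0$, positivity of a $2\times 2$ block operator with vanishing corner forces the off-diagonal block $-T_{21}^*V$ to vanish, and unitarity of $V$ then gives $T_{21}=0$ directly.
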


For the proof, it is enough to recall that $V$ is unitary  as an
operator  from $\cD_V=\cD\oplus \cY$ onto $\cR_V=\cR\oplus\cU$ and then to
refer to 
Lemma \ref{L:1}. The meaning of description \eqref{4.67} is clear: the 
operators $B^*$, $C^*$, $D^*$ and the restriction of $A^*$ to the 
subspace $\cD$ in the operator colligation $\bU^*$ are prescribed. 
The objective is to guarantee $\bU^*$ be contractive by suitably 
defining $A^*$ on $\cD^\perp$. Lemma \ref{L:2.5} states that 
$X=A^*\vert_{{\cD}^\perp}$ must be a contraction with range contained
in $\cR^\perp$.

\medskip

We now are ready to formulate the main result of this section.

\begin{theorem}  
\label{T:4.7}
Let $S$ be a function in $\cS_{d}(\cU, \cY)$ with given Agler 
decomposition ${\mathbb K}$. Then
\begin{enumerate}
\item There exists a $\tcfm$ colligation $\bU=\sbm{A & B \\ C & D}$ 
associated with ${\mathbb K}$.
\item Every $\tcfm$ colligation $\bU$ associated with
${\mathbb K}$ is weakly unitary and closely connected and
furthermore, $S(z) = D+C(I-Z_{\cH({\mathbb K})}(z)A)^{-1}
Z_{\cH({\mathbb K})}(z) B$.
\item Any weakly unitary closely connected colligation
$\widetilde{\bU}$
of the form \eqref{1.16} with the  characteristic  function equal
$S$ is unitarily equivalent to a $\tcfm$ colligation $\bU$ associated 
with associated Agler decomposition ${\mathbb K}_{\widetilde \bU}$
for $S$
given by \eqref{4.2} with $\Psi_{k}$ and $\Phi_{ij}$ given as in 
\eqref{4.3''} and \eqref{4.3'''}:
\begin{align}
\Psi_k(z,\zeta) &=\tC(I-Z_\cX(z)\widetilde{A})^{-1}{\mathcal I}_k^{*}
(I-\widetilde{A}Z_\cX(\zeta))^{-1}\tB    \label{4.74}    \\
\Phi_{i j}(z,\zeta)&=\widetilde{B}^*(I-Z_{\cX}(z)^*\widetilde{A}^*)^{-1}
{\mathcal I}_{i}{\mathcal I}_j^{*}
(I-\widetilde{A}Z_\cX(\zeta))^{-1}\tB\label{4.75} 
\end{align}
where the inclusion operators ${\mathcal I}_{j}$ are as in
\eqref{1.13}.
\end{enumerate}
\end{theorem}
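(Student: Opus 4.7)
Part (1) follows immediately from Lemma \ref{L:2.5} upon taking $X = 0$ in \eqref{4.67}, and part (2) is Proposition \ref{P:4.4}. The substantive content is part (3), which I handle as follows. Let $\widetilde\bU = \sbm{\widetilde A & \widetilde B \\ \widetilde C & D}$ be weakly unitary and closely connected with characteristic function $S$. The plan is to (a) construct the Agler decomposition $\mathbb K_{\widetilde\bU}$ from $\widetilde\bU$, (b) produce a unitary $U\colon\widetilde\cX\to\cH(\mathbb K_{\widetilde\bU})$ implementing the equivalence \eqref{1.17}, and (c) verify that the transported colligation meets Definition \ref{D:4.1}.

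For step (a), take $\Psi_k,\Phi_{ij}$ as in \eqref{4.74}, \eqref{4.75} together with
\[
K_S(z,\zeta) = \widetilde C(I-Z_\cX(z)\widetilde A)^{-1}(I-\widetilde A^*Z_\cX(\zeta)^*)^{-1}\widetilde C^*,
\]
and assemble them as in \eqref{4.2}. The kernel factors as $\mathbb K_{\widetilde\bU}(z,\zeta) = \mathbb G(z)\mathbb G(\zeta)^*$ where
\[
\mathbb G(z) = \begin{bmatrix} \widetilde C(I-Z_\cX(z)\widetilde A)^{-1} \\ \widetilde B^*(I-Z_\cX(z)^*\widetilde A^*)^{-1}\mathcal I_1 \\ \vdots \\ \widetilde B^*(I-Z_\cX(z)^*\widetilde A^*)^{-1}\mathcal I_d \end{bmatrix},
\]
so positivity is automatic. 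What remains is the Agler identity \eqref{4.3}, which splits into \eqref{2.4}, \eqref{2.11}, and \eqref{4.4}. The first will follow from the weakly coisometric property applied to the pair $(Z_\cX(z)^*(I-\widetilde A^*Z_\cX(z)^*)^{-1}\widetilde C^*y,\,y)\in\cD_{C,A}\oplus\cY$; the second is dual, using weak isometry on $\widetilde\cD_{A,B}\oplus\cU$; the third is an immediate consequence of the realization formula \eqref{1.12}.

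In step (b), I define $U\colon\widetilde\cX\to\cH(\mathbb K_{\widetilde\bU})$ by $(Ux)(z) = \mathbb G(z)x$, so that its adjoint acts on reproducing kernel sections by
\[
U^*\mathbb K_0(\cdot,z)y = (I-\widetilde A^*Z_\cX(z)^*)^{-1}\widetilde C^* y,\quad U^*\mathbb K_j(\cdot,z)u = \mathcal I_j^*(I-\widetilde A Z_\cX(z))^{-1}\widetilde B u.
\]
By close connectedness (Definition \ref{D:1.4}) the right-hand sides span $\widetilde\cX$, and by the RKHS construction the left-hand sides span $\cH(\mathbb K_{\widetilde\bU})$, so $U$ will be unitary once I verify that $U^*$ preserves the inner products of these generators. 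The reproducing kernel property turns each such equality into one of the three component Agler identities already established in step (a), so no new computation is needed here.

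For step (c), set $A = (\oplus_{i=1}^d U)\widetilde A U^*$, $B = (\oplus_{i=1}^d U)\widetilde B$, $C = \widetilde C U^*$. Evaluating the intertwinings on the kernel generators then yields $C^*y = \mathbb K_0(\cdot,0)y$, $Bu = \mathbb T(\cdot,0)u$, and the Gleason-type identities \eqref{4.23}--\eqref{4.25}, which by Proposition \ref{P:4.2} are precisely the defining conditions of Definition \ref{D:4.1}. The hard part will be step (a): the standard kernel identities are typically proved assuming $\widetilde\bU$ is genuinely unitary, whereas here only weak unitarity is available. The saving point is that the key intermediate vectors---most notably $Z_\cX(z)^*(I-\widetilde A^*Z_\cX(z)^*)^{-1}\widetilde C^* y$ and its dual---lie precisely in the canonical subspaces $\cD_{C,A}$ and $\widetilde\cD_{A,B}$ on which $\widetilde\bU^*$ and $\widetilde\bU$ do act isometrically, so the standard argument goes through after restriction.
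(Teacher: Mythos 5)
Your proposal is correct and follows essentially the same three-step structure as the paper's proof: parts (1) and (2) cite the same Lemma \ref{L:2.5} and Proposition \ref{P:4.4}, and your steps (a), (b), (c) match the paper's Steps 1, 2, 3 (constructing the Agler decomposition via the factored kernel ${\mathbb G}(z){\mathbb G}(\zeta)^*$, using close connectedness to show $x\mapsto {\mathbb G}(\cdot)x$ is unitary, and transporting $\widetilde A,\widetilde B,\widetilde C$ to obtain a $\tcfm$ colligation). Your closing observation that weak unitarity suffices because the relevant defect terms are evaluated on vectors lying in $\cD_{C,A}$ and $\widetilde{\cD}_{A,B}$ is precisely how the paper justifies identities \eqref{4.71}--\eqref{4.72}.
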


\begin{proof} Part (1) is contained in Lemma \ref{L:2.5}. Part (2)
was 
proved in Proposition \ref{P:4.4}. To prove part (3) we assume that 
$\widetilde{\bU}=\sbm{\tA & \tB \\ \tC & D}\colon \;
\sbm{\cX \\ \cU}\to \sbm{\cX^d \\ \cY}$ is a closely connected 
weakly unitary colligation with the state space $\cX$ and such that 
\begin{equation} 
S(z)=D+\widetilde{C}(I-Z_{\cX}(z)\widetilde{A})^{-1}Z_{\cX}(z)\widetilde{B}.
\label{4.68}
\end{equation}
The proof of unitary equivalence of $\widetilde{\bU}$ to a $\tcfm$ 
colligation for $S$ associated with the Agler decomposition as in 
\eqref{4.74}, \eqref{4.75} will be broken into three steps below. Let
${\mathbb 
G}(z)$ be the  operator-valued function
\begin{equation}
{\mathbb G}(z):=\left[\begin{array}{c}
\widetilde{C} (I-Z_{\cX}(z)\widetilde{A})^{-1}\\
\widetilde{B}^*(I-Z_{\cX}(z)^*\widetilde{A}^*)^{-1}{\mathcal 
I}_{1} \\ \vdots \\ 
\widetilde{B}^*(I-Z_{\cX}(z)^*\widetilde{A}^*)^{-1}{\mathcal
I}_{d} \end{array}\right]: \; \B^d\to \cL(\cX, \cY\oplus\cU^d),
\label{4.69}
\end{equation}
with the operators ${\mathcal I}_{j}$  as in \eqref{1.13}.

\medskip

{\bf Step 1:} {\em The Agler decomposition \eqref{4.3} holds for the 
kernel ${\mathbb K}$ given by}
\begin{equation}
{\mathbb K}(z,\zeta)={\mathbb G}(z){\mathbb G}(\zeta)^*
\label{4.70}  
\end{equation}
{\bf Proof of Step 1:} It follows by straightforward calculations
(see e.g., \cite{BTV}) that for the  characteristic  function $S$
\eqref{4.68} 
of the colligation $\widetilde{\bU}=\sbm{\tA & \tB \\ \tC & D}$,
\begin{align*}
I_{\cY}-S(z)S(\zeta)^*=&(1-\langle z,\zeta\rangle)\tC (I-Z_\cX(z)
\widetilde{A})^{-1}(I-\widetilde{A}^*Z_\cX(\zeta)^*)^{-1}\tC^*\\
&+F(z)\left(I-\widetilde{\bU}\widetilde{\bU}^*\right)F(\zeta)^*
\end{align*}
where $F(z)=\begin{bmatrix}\tC (I-Z_\cX(z)\widetilde{A})^{-1}Z_\cX(z)
& I\end{bmatrix}$, and 
\begin{align*}
I-S(z)^*S(\zeta)=&\tB^*(I-Z_\cX(z)^* \widetilde{A}^*)^{-1}
\left(I-Z_\cX(z)^*Z_\cX(\zeta)\right)(I-\widetilde{A}
Z_\cX(\zeta))^{-1}\tB\\ 
&+\widetilde{F}(z)\left(I-\widetilde{\bU}^*\widetilde{\bU}\right)
\widetilde{F}(\zeta)^*
\end{align*}
where 
$\widetilde{F}(z)=\begin{bmatrix}\tB^*(I-Z_\cX(z)^*\widetilde{A}^*)^{-1}
Z_\cX(z)^* & I\end{bmatrix}$, from which it is clear that
weak-coisometric 
and  weak-isometric properties of $\widetilde{\bU}$ (see Definition 
\ref{D:1.4}) are exactly what is needed for the respective identities 
\begin{align}
K_S(z,\zeta)&=\tC (I-Z_\cX(z)
\widetilde{A})^{-1}(I-\widetilde{A}^*Z_\cX(\zeta)^*)^{-1}\tC^*,
\label{4.71}\\
I-S(z)^*S(\zeta)&=\tB^*(I-Z_\cX(z)^* \widetilde{A}^*)^{-1}
\left(I-Z_\cX(z)^*Z_\cX(\zeta)\right)(I-\widetilde{A}Z_\cX(\zeta))^{-1}\tB.
\label{4.72}
\end{align}
Since  $\widetilde{\bU}$ is weakly unitary by assumption, the two
latter 
identities hold. Also we  observe that for $S$ of the form
\eqref{4.68}, 
\begin{align}
S(z)-S(\zeta)&=\tC (I-Z_\cX(z)
\widetilde{A})^{-1}Z_\cX(z)\tB-\tC Z_\cX(\zeta)(I-
\widetilde{A}Z_\cX(\zeta))^{-1}\tB\notag\\
&=\tC (I-Z_\cX(z)
\widetilde{A})^{-1}\left(Z_\cX(z)-Z_\cX(\zeta)\right)(I-        
\widetilde{A}Z_\cX(\zeta))^{-1}\tB.\label{4.73}
\end{align}
We now conclude from \eqref{4.69}, \eqref{4.70} and 
\eqref{4.71}--\eqref{4.73} that 
the kernel ${\mathbb K}$ indeed extends $K_S$ and is of the form 
\eqref{4.2} with $\Psi_{k}(z, \zeta)$ and $\Phi_{i j}(z, 
\zeta)$ given by \eqref{4.74} and \eqref{4.75}
for $k,i,j=1,\ldots,d$. It follows from \eqref{4.74}, \eqref{1.13}
and
\eqref{4.73} that
\begin{align*}
\sum_{k=1}^{d}(z_k-\zeta_k)\Psi_k(z,\zeta)&=\tC(I-Z_\cX(z)\widetilde{A})^{-1}
\left(\sum_{k=1}^{d}(z_k-\zeta_k){\mathcal I}_k^{*}\right)
(I-\widetilde{A}Z_\cX(\zeta))^{-1}\tB\\
&=\tC (I-Z_\cX(z)
\widetilde{A})^{-1}\left(Z_\cX(z)-Z_\cX(\zeta)\right)(I-
\widetilde{A}Z_\cX(\zeta))^{-1}\tB \\
&=S(z)-S(\zeta)
\end{align*}
so that equality \eqref{4.4} holds. Equality \eqref{2.11} follows in
much 
the same way from \eqref{4.75} and \eqref{4.72}. Thus, the identity 
\eqref{4.3} holds which completes the proof.

\medskip

{\bf Step 2:} {\em The linear map $U: \, \cX\to \cH({\mathbb K})$
defined
by the formula
\begin{equation}
U: \; x \mapsto  {\mathbb G}(z)x
\label{4.76}
\end{equation}
is unitary}.

\medskip 

{\bf Proof of Step 2:} Due to factorization
\eqref{4.70}, the reproducing kernel Hilbert space $\cH({\mathbb K})$
can be characterized as the range space
$$
\cH({\mathbb K})=\left\{f(z)={\mathbb G}(z)x: \; x\in\cX\right\}
$$
with the lifted norm $\|{\mathbb G}x\|_{\cH({\mathbb
K})}=\|(I-\pi)x\|_{\cX}$
where $\pi$ is the orthogonal projection onto the subspace
$\cX^\circ=\left\{x\in\cX: \, {\mathbb G}x\equiv 0\right\}$.
For every vector $x\in\cX^\circ$ we have by \eqref{4.69},
$$
\widetilde{C} (I-Z_\cX(z)\widetilde{A})^{-1}x \equiv 0
\quad\mbox{and}\quad
\widetilde{B}^*(I-Z_\cX(z)^*\widetilde{A}^*)^{-1}{\mathcal I}_{j}^*x 
\equiv 0
$$
for all $j=1,\ldots,d$. Then $x$ is orthogonal to
the spaces $\cH^{\mathcal O}_{\tC,\tA}$ and $\cH^{\mathcal
C}_{\tA,\tB}$
(see Definition \ref{D:1.4}) and since the colligation
$\widetilde{\bU}$
is closely connected, it follows that $x=0$. Thus, $\cX^\circ$ is
trivial
and $\|{\mathbb G}x\|_{\cH({\mathbb K})}=\|x\|_{\cX}$ which means that
the operator $U: \, x\to {\mathbb G}(z)x$ is a unitary operator 
from $\cX$ to $\cH({\mathbb K})$.

\medskip

{\bf Step 3:} {\em Define the operators $A: \, \cH({\mathbb
K})\to \cH({\mathbb K})^d$, $B: \,  \cU\to\cH({\mathbb K})^d$
and $C: \, \cH({\mathbb K})\to\cY$ by
\begin{equation}
A U=\left(\oplus_{j=1}^{d} U\right)
\widetilde{A},\quad B= \left(\oplus_{j=1}^{d}
U\right)\widetilde{B}\quad\mbox{and}\quad CU=\widetilde{C}
\label{4.77}
\end{equation}
where  $U: \, \cX\to \cH({\mathbb K})$ is defined in \eqref{4.76}.
The colligation $\bU=\sbm{A&B\\ C&D}$ is a $\tcfm$ colligation
associated 
with the Agler decomposition ${\mathbb K}$ for $S$}.

\medskip

{\bf Proof of Step 3:} We first observe that the colligation $\bU=
\sbm {A&B \\ C & D}$ is a contraction since it is unitarily
equivalent 
to a weakly unitary colligation $\widetilde{\bU}$. It remains to show
that 
$A$ solves the Gleason problems \eqref{4.20}, \eqref{4.21} and that 
$C$ and $B^*$ are of the form \eqref{4.22}.

\smallskip

Take the generic element $f$ of $\cH({\mathbb K})$ in the form
$$
f(z)={\mathbb G}(z)x, \quad x\in\cX
$$
so that $f=Ux$ by \eqref{4.76}, or equivalently, $x=U^*f$, since $U$
is 
unitary.
By definitions \eqref{4.17} and \eqref{4.69} we have
\begin{equation}
({\bf s}f)(z)=\tC(I-Z_\cX(z)\tA)^{-1}x.
\label{4.78}        
\end{equation}
Upon evaluating the latter equality at $z=0$ we get for the operator
$C$ 
from \eqref{4.77}
$$
Cf=\tC U^*f=\tC x=({\bf s}f)(0)
$$
so that the formula \eqref{4.22} for $C$ holds. We also have from 
\eqref{4.78}
\begin{equation}
({\bf s}f)(z)-({\bf s}f)(0)=\tC(I-Z_\cX(z)\tA)^{-1}x-
\tC x=\tC(I-Z_\cX(z)\tA)^{-1}Z_\cX(z)\tA x.
\label{4.79}
\end{equation}
On the other hand, for the operator $A$ defined in \eqref{4.77}, we
have
$$
Z_{\cH({\mathbb K})}(z)Af=Z_{\cH({\mathbb K})}(z) AUx
=Z_{\cH({\mathbb K})}(\oplus_{j=1}^d U)\tA x=UZ_\cX(z)\tA x
$$
and therefore, by formula \eqref{4.78} applied to $Z_\cX(z)\tA x$ 
rather than to $x$ we get
$$
{\bf s}(Z_{\cH({\mathbb K})}(z)Af)(z)=\tC(I-Z_\cX(z)\tA)^{-1}
Z_\cX(z)\tA{\bf x}
$$
which together with \eqref{4.79} implies \eqref{4.20}.

\smallskip

We now take the generic element $g$ of $\cH({\mathbb K})^d$ in the
form
$$
g(z)=\bigoplus_{j=1}^d {\mathbb G}(z)x_j
\quad\mbox{and let}\quad {\bf x}:=\bigoplus_{j=1}^d  x_j\in\cX^d,
$$
so that ${\bf x}=(\oplus_{j=1}^d U^*)g$. By definitions \eqref{4.17}
and 
\eqref{4.69} we have
\begin{equation}
(\widetilde{\bf s}g)(z)=\sum_{j=1}^d 
\widetilde{B}^*(I-Z_\cX(z)\widetilde{A}^*)^{-1}
{\mathcal I}_{j} x_k=\widetilde{B}^*(I-Z_\cX(z)\widetilde{A}^*)^{-1}
{\bf x}.
\label{4.80}
\end{equation}
Upon evaluating the latter equality at $z=0$ we get for the operator
$B^*$ 
from \eqref{4.77}
$$
B^*g=\tB^*(\oplus_{j=1}^d U^*)g=\tB^*{\bf x}=(\widetilde{\bf s}g)(0)
$$
so that the formula \eqref{4.22} for $B^*$ holds. We also have from
\eqref{4.80}
$$
(\widetilde{\bf s}g)(z)-(\widetilde{\bf s}g)(0)=
\widetilde{B}^*(I-Z_{\cX}(z)^*\widetilde{A}^*)^{-1}Z_{\cX}
(z)^*\widetilde{A}^*{\bf x}.
$$
On the other hand, for the operator $A$ defined in \eqref{4.77}, we
have
\begin{align*}
{\bQ}(z)^*\otimes A^*g&={\bQ}(z)^*\otimes (A^*(\oplus_{j=1}^d U){\bf 
x})\\
&={\bQ}(z)^*\otimes U (\tA^*{\bf x})=(\oplus_{j=1}^d 
U)Z_\cX(z)^*\tA^*{\bf x}
\end{align*}
and therefore, by formula \eqref{4.80} applied to 
$Z_\cX(z)^*\tA^*{\bf x}$ instead of  ${\bf x}$ we get
$$
\widetilde{\bf s}(Z_{\cH({\mathbb 
K})}(z)^*A^*g)(z)=\widetilde{B}^*(I-Z_\cX(z)^*\widetilde{A}^*)^{-1}Z_\cX
(z)\widetilde{A}^*{\bf x}
$$
which together with \eqref{4.79} implies \eqref{4.21}. This completes 
the proof of Step 3. 

\smallskip

To complete the proof of the theorem, it suffices to observe that
the colligation $\widetilde{\bU}$ is unitarily equivalent to a 
$\tcfm$ colligation $\bU$ by construction \eqref{4.77} and definition 
\eqref{1.17}. 
\end{proof}

\section{Characteristic functions of commutative row contractions: 
unitary equivalence and coincidence}
\label{S:CharF}

The  Sz.-Nagy-Foias characteristic function of a Hilbert space 
contraction  $T\in\cL(\cX)$ is defined as 
$$
\theta_{T}(z)= \left.\left( -T + z D_{T^{*}}(I_{\cX} -
 z T^{*})^{-1} D_{T}\right)\right|_{{\mathcal D}_{T}}
$$
where $D_{T}$ and $D_{T^{*}}$ are the defect operators and 
${\mathcal D}_{T}$, ${\mathcal D}_{T^*}$ are the defect spaces 
recalled in \eqref{6.1} below. The function $\theta_{T}$
belongs 
to the Schur class ${\mathcal S}({\mathcal D}_{T}, {\mathcal 
D}_{T^{*}})$ and is {\em pure} in the sense that
\begin{equation}
\|\theta_{T}(0)u\| = \| u \| \quad \text{for some} \quad u \in \cU\quad
\Longrightarrow \quad u = 0.
\label{6.01}
\end{equation}
The Schur-class membership and pureness are the properties which 
characterize characteristic functions. 
A classical result of 
B.~Sz.-Nagy and C.~Foias (see \cite{NF}) is: {\em  If $T$ is
completely 
nonunitary (\textbf{c.n.u.}) contraction
(i.e., $T$ is a contraction and there is no nontrivial  reducing 
subspace $\cM$ for $T$ so that the $T\vert_{\cM}$ is unitary), then 
the characteristic function $\theta_T$ is a complete unitary
invariant of 
$T$}. More precisely, if $T\in\cL(\cX)$ and $R\in\cL(\widetilde{\cX})$
are two \textbf{c.n.u.}~contractions, then they are unitarily 
equivalent if and only if their characteristic functions $\theta_{T}$ 
and $\theta_{R}$ coincide;
by definition two operator-valued functions $S$ and $\widetilde{S}$
with the same domain of definition {\em coincide} if 
$S(z)\equiv \alpha\widetilde{S}(z)\beta$ for some unitary 
transformations $\alpha$ and $\beta$. Moreover, if one starts with a 
pure Schur-class function $S$, one can associate a \textbf{c.n.u.}~ 
contraction $T(S)$ defined on the associated Sz.-Nagy-Foias
canonical model Hilbert space ${\mathcal K}(S)$. We mention that
there is a 
dictionary between the Sz.-Nagy--Foias model $(T(S), {\mathcal K}(S))$
and the de Branges-Rovnyak model space $(T_{dBR}(S), \cH(\widehat
K_{S}))$
where $\widehat K_{S}$ is the positive kernel given by \eqref{1.2}
and 
where $T_{dBR}(S) = A^{*}$ where $A$ is the operator on $\cH(\widehat 
K_{S})$ given in part (3) of Theorem  \ref{T:sum}
(see e.g.~\cite{BK}).  It is easy to see that if 
$T$ and $R$ are unitarily equivalent, then the associated model 
contraction operators $T(\theta_{T})$ and $R(\theta_{R})$ (or 
$T_{dBR}(\theta_{T})$ and $T_{dBR}(\theta_{R})$) are unitarily
equivalent.
The result mentioned above can be further elaborated as follows:  {\em If $T\in\cL(\cX)$ 
is a \textbf{c.n.u.}~contraction operator, then $T$ is unitarily 
equivalent to its functional model contraction operator 
$T(\theta_{T})$ on $\cK(\theta_{T})$ or $T_{dBR}(\theta_{T})$ on 
$\cH(\widehat{K}_{\theta_{T}})$.}  We should also mention that if $T$ is {\em 
completely non-coisometric} (\textbf{c.n.c.}---see the discussion 
below for precise definitions), then it suffices to take $T_{dBR}(S) 
= A^{*}$ where $A$ is the backward shift operator acting on $\cH(K_{S})$ 
as in \eqref{1.8}.  We mention the paper of Foias-Sarkar \cite{FS} as 
a very recent application of the Sz.-Nagy-Foias model theory for a 
single contraction operator.
Let us also mention that there is a second approach to unitary 
classification of Hilbert space operators based on the curvature 
invariant of Cowen and Douglas \cite{CD}; for a recent comparison 
between these two approaches, we refer to \cite{DKKS}.

In this section we discuss extensions of 
this Sz.-Nagy--Foias model theory to the context of 
row contractions, that is to $d$-tuples of operators ${\mathbf T} = 
(T_{1}, \dots, T_{d})$ on a Hilbert space $\cX$ for which the 
associated block-row matrix is contractive:
\begin{equation}
 \|T\| \le 1 \quad\text{where}\quad T  = \begin{bmatrix} T_{1} &
\cdots &T_{d} \end{bmatrix} \colon \cX^{d} \to \cX.
\label{6.0}
\end{equation}
For such a row-contraction, let
\begin{align}
D_{T} &= (I_{\cX^{d}} - T^{*}T)^{1/2}, \qquad {\mathcal D}_{T} =
\overline{\operatorname{Ran}}\, D_{T} \subset \cX^{d},  \notag\\
D_{T^{*}} &= (I_{\cX} - T T^{*})^{1/2}, \qquad
{\mathcal D}_{T^{*}} = \overline{\operatorname{Ran}}\, 
D_{T^{*}}\subset \cX.
\label{6.1}
\end{align}
The  characteristic function $\theta_{T, nc}$ of a row contraction 
has been introduced in \cite{popescujot} (in slightly different terms) as 
a formal power series in $d$ noncommuting indeterminates
$\bz_1,\ldots, \bz_d$ which can be written in a compact realization form as
\begin{equation}  \label{6.2}   
\theta_{T, nc}(\bz)  = \left.\left( -T + D_{T^{*}}(I_{\cX} -
Z_\cX(\bz)T^{*})^{-1}Z_\cX(\bz) D_{T}\right)\right|_{{\mathcal D}_{T}}
\colon
{\mathcal D}_{T} \to{\mathcal D}_{T^ {*}}
\end{equation}
where $Z_\cX(\bz)$ is of the form \eqref{1.12a} (but with the 
noncommuting indeterminates $\bz_{1}, \dots, \bz_{d}$ replacing the 
commuting variables $z_{1}, \dots, z_{d}$).
To write this expression out more explicitly, we need the following 
notation connected with formal power series in noncommuting 
indeterminates.  Let ${\mathcal F}_{d}$ consists of all words $v = 
i_{N} \cdots i_{1}$ with letters $i_{j}$ coming from the alphabet 
$\{1, \dots, d\}$. Then the operator of concatenation
$ v \cdot v' = v''$ where
$$
 v'' = j_{N'} \cdots j_{1} i_{N} \cdots i_{1} \quad\text{if}\quad
 v' = j_{N'} \cdots j_{1} \quad\text{and}\quad v = i_{N} \cdots i_{1}
$$
makes ${\mathcal F}_{d}$ a free semigroup; here we include the empty 
word, denoted as $\emptyset$, as an element of ${\mathcal F}_{d}$ 
which serves as the identity element of the semigroup.  For $\{\bz_{1}, 
\dots, \bz_{d}\}$ a $d$-tuple of freely noncommuting indeterminates and 
for $v = i_{N} \cdots i_{1}$ an element of ${\mathcal F}_{d}$, we let
$\bz^{v}$ denote the noncommutative monomial $\bz_{i_{N}} \cdots
\bz_{i_{1}}$. For the case $v = \emptyset$, we set $\bz^{\emptyset} = 1$.
We extend this noncommutative functional calculus to a $d$-tuple of
operators ${\mathbf T} = (T_{1}, \dots, T_{d})$ on a Hilbert space
$\cX$:
\begin{equation}  \label{bAv}
{\mathbf T}^{v} = T_{i_{N}} \cdots T_{i_{1}}\quad \text{if}\quad
v = i_{N} \cdots i_{1} \in \cF_{d} \setminus \{ \emptyset\};\quad {\mathbf
T}^{\emptyset} = I_{\cX}.
\end{equation}
Similarly, for ${\mathbf T}^{*} = (T_{1}^{*}, \dots, T_{d}^{*})$ equal to a 
$d$-tuple of (not necessarily commuting) operators, we use the 
notation ${\mathbf T}^{*v}$  to indicate the product ${\mathbf T}^{*v} = T_{i_{N}}^{*} 
\cdots T_{i_{1}}^{*}$, with ${\mathbf T}^{* \emptyset}$ equal to the identity 
operator $I$.
We wish to point out that the expression \eqref{6.2} for the 
characteristic function can also be written more explicitly as
the noncommutative formal power series 
\begin{equation}  \label{NCcharfunc}
\theta_{T, nc}(\bz) =  \sum_{v \in {\mathcal F}_{d}} [ \theta_{T,nc}]_{v}\bz^{v}
\end{equation}
where the power series coefficients $[ \theta_{T,nc}]_{v}: \; \cD_{T}
    \to \cD_{T^{*}}$ are given by
\begin{equation}
[\theta_{T, nc}]_{v} =  \begin{cases}
-T|_{{\mathcal D}_{T}} & \text{if } v = \emptyset, \\
D_{T^{*}} {\mathbf T}^{*v'} {\mathcal I}_{j}^{*} D_{T}
& \text{if } v \ne \emptyset \; \text{ has the form } \; v=  v' \cdot j
    \end{cases}
    \label{moments}
\end{equation}
(where ${\mathcal I}_{j}^{*} \colon \cH^{d} \to \cH$ is as in
\eqref{1.13}). Thus we see that knowledge of the characteristic function amounts to 
knowledge of all the moment operators \eqref{moments}. 
It is readily seen that 
if $T = \begin{bmatrix}T_{1} & \dots & T_{d}\end{bmatrix}$ and $R = \begin{bmatrix} 
R_{1} & \dots & R_{d}\end{bmatrix}$ are two unitarily equivalent row
contractions (i.e., $UT_iU^*=R_i$ for $i=1,\ldots,d$ and some unitary
operator $U$), then the formal power series $\theta_{T,nc}(\bz)$ and 
$\theta_{R,nc}(\bz)$ coincide (the coincidence for noncommutative formal
power series is defined in much the same way as for usual functions), 
or equivalently, the set of moments \eqref{moments} associated with 
$T$ coincide with those associated with $R$. The converse was proved in 
\cite{popescujot} under the assumption that $T$ and $R$ are {\em
completely non-coisometric} ({\bf c.n.c.}). Recall that  a row contraction 
$T$ as in \eqref{6.0} is called {\em completely  
non-coisometric} ({\bf c.n.c.}) if  there is no nontrivial  subspace ${\mathcal 
M}\subset\cX$ invariant under 
$T_i^*$ for $i=1,\ldots,d$ so that that the operator
\begin{equation}
P_{\mathcal M}\begin{bmatrix} T_1\vert_{\mathcal 
M}&\ldots&T_d\vert_{\mathcal
M}\end{bmatrix}: \; {\mathcal M}^d\to {\mathcal M}
\label{6.4}
\end{equation}
is a coisometry.  An equivalent formulation is that
\begin{equation}   \label{cnc-span}
\cX = \bigvee \left\{ \operatorname{Ran} {\mathbf T}^{v}  D_{T^{*}} \colon v 
\in {\mathcal F}_{d}, k = 1, \dots, d \right\}.
\end{equation}
Thus, the result from \cite{popescujot} states that
if $T$ is a {\bf c.n.c.}~row contraction, then  the
characteristic function $\theta_{T, nc}$  is a complete unitary invariant for 
$T$. In view of the explicit formula  \eqref{NCcharfunc}--\eqref{moments} for 
$\theta_{T,nc}$, we see that the latter result can be rephrased as saying that the set of 
moments 
\begin{equation}  \label{moments'}
 -T^{*}, \, D_{T^{*}} {\mathbf T}^{v} {\mathcal I}_{j}^{*} D_{T} \colon \cD_{T} \to 
\cD_{T^{*}}
\end{equation}
(where $v$ runs over all words in ${\mathcal F}_{d}$ and $j$ runs over 
all indices in $\{1, \dots, d\}$) form a complete set of unitary invariants 
for a row contraction in the {\bf c.n.c.}~case.

\smallskip

The more general class of completely nonunitary ({\bf c.n.u.})~row 
contractions as defined in \cite{Cuntz-scat} consists of those $T$ for which there is no 
nontrivial subspace ${\mathcal M}$ reducing for each $T_1,\ldots,T_d$ on which the
 operator block-row matrix 
\eqref{6.4} is unitary.  An equivalent formulation (see 
\cite[Proposition 3.3.5]{Cuntz-scat}) is that 
\begin{equation}   \label{cnu-span}
\cX = \bigvee\left\{ \operatorname{Ran} {\mathbf T}^{v}  D_{T^{*}}, 
\operatorname{Ran} {\mathbf T}^{\alpha} {\mathbf T}^{*\beta} 
{\mathcal I}_{k}^{*} D_{T}\colon \alpha, 
\beta \in {\mathcal F}_{d},\, k=1, \dots, d  \right\}.  
\end{equation}
The characteristic function $\theta_{T,nc}$ does 
not recover $T$ up to unitary equivalence. However, it was shown in 
\cite{Cuntz-scat} that there is an operator $L_T$ so that
the pair $(\theta_{T,nc}, L_T)$ is a  complete unitary invariant
for $T$. A more concrete version of the result from
\cite{Cuntz-scat} (see the 
discussion around equations (5.3.6) there) is that a 
complete set of invariants (up to coincidence) for the 
\textbf{c.n.u.}~case is given by the expanded set of moments 
\begin{equation}   \label{exp-moments}
    -T, \, D_{T^{*}} {\mathbf T}^{*v} {\mathcal I}_{j}^{*} D_{T} \colon
    \cD_{T} \to \cD_{T^{*}}, \quad
    D_{T} {\mathcal I}_{k} {\mathbf T}^{v} {\mathbf T}^{*v'} 
    {\mathcal I}_{j}^{*} D_{T} \colon \cD_{T} \to \cD_{T}
\end{equation}
where $v$ and $v'$ run over all words in ${\mathcal F}_{d}$ and where
$k$
and $j$ run over the set of indices $\{1, \dots, d\}$.  
This work also makes explicit the 
construction of a model contraction operator acting on a 
Sz.-Nagy--Foias canonical model space
(see \cite{popescujot} for the \textbf{c.n.c.}~case and 
\cite{Cuntz-scat} for the \textbf{c.n.u.}~case). 

\smallskip

It is not difficult to see that any such characteristic function 
$\theta_{T, nc}$ defines a contractive multiplier on the Fock space which 
commutes with the right creation operators (see \cite{Cuntz-scat}).  
Formal power series for which the associated multiplication operator 
is bounded on the Fock space are called {\em multianalytic functions} 
in \cite{popescujot}.  Conversely, any contractive multianalytic 
function $S(\bz) = \sum_{v \in {\mathcal F}_{d}} S_{v} \bz^{v}$ is a 
characteristic function for some \textbf{c.n.u.} row contraction $T$ 
if and only if $S$ is also {\em pure} in the sense that 
$\| S_{\emptyset} u\|_{\cY} = \| u \|_{\cU}$ only when $u = 0$ (see 
\cite[page 89]{Cuntz-scat}).  
Contractive multipliers $S(\bz) = \sum_{v \in {\mathcal F}_{d}} S_{v} \bz^{v}$
equal to the characteristic function of a \textbf{c.n.c.}~row 
contraction $T$ are characterized by having the additional property that
$I - S(\bz)^{*} S(\bz) \ge G(\bz)^{*} G\bz)$ for some multianalytic $G$ 
forces $G = 0$ (see Remark 5.3.5 in \cite{Cuntz-scat}).  
Thus one can say that \textbf{c.n.c.}~row 
contractions are parametrized by (equivalence classes up to 
coincidence of) pure contractive multianalytic functions $S(\bz)$ for 
which the defect $I - S(\bz) S(\bz)^{*}$ has zero maximal factorable 
minorant, while \textbf{c.n.u.}~row contractions are parametrized by 
equivalence classes of pure contractive multianalytic functions 
combined with the second invariant $L_T$, the details of which need 
not concern us here. 

\smallskip

If we replace the noncommutative indeterminates $\bz_1,\ldots, \bz_d$ 
with commuting variables $z_{1}, \dots, z_{d}$
in  formula \eqref{6.2}, then we get a function $\theta_{T}(z)$ analytic on $\B^d$ 
and certainly depending 
on $T$ only. Moreover, this function is the characteristic function
of the colligation
\begin{equation}
\bU_{T}=\begin{bmatrix}T^* & D_{T} \\ D_{T^{*}} &
-T\end{bmatrix}: \;
\begin{bmatrix}\cX \\ {\mathcal D}_{T}\end{bmatrix}\to
\begin{bmatrix}\cX^d \\ {\mathcal D}_{T^*}\end{bmatrix}
\label{6.3}
\end{equation}
which is the {\em Halmos unitary dilation} of $T^*$; 
therefore, $\theta_{T}$ belongs to ${\mathcal S}_{d}({\mathcal 
D}_{T}, {\mathcal D}_{T^{*}})$ by Theorem \ref{T:1.1}. 
It is not hard to 
show that an $\cL(\cU,\cY)$-valued function $S$ coincides with a
function  $\theta_T$ of the form \eqref{6.2} for some Hilbert-space 
row contraction  $T$ of the form \eqref{6.0}
if and only if $S$ belongs to the Schur class $\cS_d(\cU,\cY)$
and is  pure in the sense of \eqref{6.01}. Thus, the commutative
(analytic) version of formula \eqref{6.2} perfectly fits the
framework of the present paper. However, this version is meaningful only in case 
$T=\begin{bmatrix} T_1 & \dots & T_d\end{bmatrix}$  is a {\em commutative row-contraction} 
(i.e., $T_iT_j=T_jT_i$ for 
$i,j=1,\ldots,d$); otherwise  $\theta_T$ does not contain enough 
information about $T$ to recover $T$ up to unitary equivalence.  
In what follows, we focus on commutative row-contraction $T$ of the form \eqref{6.0};  
following \cite{BES}, \cite{BES2}, 
we refer to the (analytic) 
function \eqref{6.2} (with commuting variables $z_{1}, \dots, z_{d}$ 
in place of the noncommuting indeterminates $\bz_{1}, \dots, \bz_{d}$) as the 
{\em the characteristic function} $\theta_{T}(z)$ of such a
$T$ and the formal power series in freely noncommuting indeterminates 
$\theta_{T, nc}(\bz)$ as in \eqref{6.2} as the {\em noncommutative} (or {\em n.c.}) {\em 
characteristic function} of $T$.  

\smallskip

For the case of {\em commutative} row- contractions, note that the conditions 
\eqref{cnc-span} and \eqref{cnu-span} simplify.  Thus {\em the commutative row contraction 
$T$ is} \textbf{c.n.c.}~{\em exactly when}
\begin{equation}   \label{cnc-span-com}
    \cX = \bigvee \left\{ \operatorname{Ran} {\mathbf T}^{n}  D_{T^{*}} \colon n
\in {\mathbb Z}^{d}_{+}, k = 1, \dots, d \right\},
\end{equation}
where we use the standard multivariable notation 
\begin{equation}   \label{power}
{\mathbf T}^{n} = T_{1}^{n_{1}} \cdots T_{d}^{n_{d}}\quad\mbox{for}\quad 
n = (n_{1}, \dots, n_{d})\in {\mathbb Z}^{d}_{+}
\end{equation}
and for ${\mathbf T} = (T_{1}, \dots, T_{d})$ a 
commutative operator tuple.  Similarly, {\em the commutative row-contraction $T$
of the form \eqref{6.0} is } \textbf{c.n.u.}~{\em exactly when}
\begin{equation}   \label{cnu-span-com}
    \cX = \bigvee\left\{ \operatorname{Ran} {\mathbf T}^{n}  D_{T^{*}}, 
\operatorname{Ran} {\mathbf T}^{n} {\mathbf T}^{*m} {\mathcal I}_{k} D_{T}\colon n, 
m \in {\mathbb Z}^{d}_{+},\, k=1, \dots, d  \right\}.  
\end{equation}

\begin{remark}\label{R:com-vs-nc}
    {\em
Before proceeding further, let us first observe that {\em any 
commutative row-unitary}  (or even just {\em row-isometric}) {\em tuple ${\mathbf U}$ as in \eqref{6.0} is 
trivial if $d>1$}.  Indeed, suppose that  $d>1$ and 
${\mathbf U} = (U_{1}, \dots, U_{d})$ is  a row-isometric 
tuple.  This means that each $U_{j}$ is an isometry and the ranges of 
$U_{1}, U_{2}, \dots, U_{d}$ have pairwise orthogonal ranges 
(spanning the whole space ${\mathcal X}$ in case ${\mathbf U}$ is row-unitary).
In particular,
$\operatorname{Ran} U_{j} \perp \operatorname{Ran} U_{k}$ for $j \ne 
k$.  But then we also have
$$\operatorname{Ran} U_{j}U_{k} = \operatorname{Ran} U_{k} U_{j} 
\subset 
\operatorname{Ran} U_{j} \cap \operatorname{Ran} U_{k} = \{0\} \text{ 
for } j \ne k.
$$
As $U_{k}U_{j}$ is also an isometry, it follows that the ambient 
Hilbert space ${\mathcal X}$ is the zero space.
As a consequence of this observation, it follows that {\em any 
commutative row contraction $T$ is} \textbf{c.n.u.}.
Indeed, there can be no nonzero reducing subspace for $\bT$ 
on which $\bT$ is row-unitary, since then necessarily the 
restriction of ${\mathbf T}$ to such a subspace would have to be simultaneously 
commutative and non-commutative.  We conclude that {\em any 
commutative row-contraction $T$ as in \eqref{6.0} 
is unitarily equivalent to a noncommutative Sz.-Nagy-Foias functional 
model as in \cite{Cuntz-scat} based on its n.c.-characteristic 
function $\theta_{\bT,nc}$.}  The drawback of this model of course is that 
it does not display prominently the additional structure that $T$ is commutative.}
\end{remark}

The following result was first obtained in \cite{BES2};  it is also 
possible to give a unified proof which includes the noncommutative 
and commutative setting in one formalism (see \cite{BhatBhat,
Popescu2006, Popescu2007}) and there is now an extension of the 
general theory to the setting of more general operator-tuples associated with a general 
``positive regular freely holomorphic function'' $f$ (see 
\cite{Popescu2010}. 
We  give here an alternative direct proof of the result
based on the results from Section 2 which suggests extensions to the cases 
beyond the \textbf{c.n.c.}~setting.

\begin{theorem} 
\label{P:6.4}
Two commutative {\bf c.n.c.}~row contractions 
$T=\begin{bmatrix}T_1&\dots&T_d\end{bmatrix}$ and 
$R=\begin{bmatrix}R_1&\dots&R_d\end{bmatrix}$ 
are unitarily equivalent if and only if their  characteristic 
functions $\theta_T$ and $\theta_{R}$ coincide.  
\end{theorem}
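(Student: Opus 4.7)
My plan is to handle the easy direction directly from the realization formula \eqref{6.2}, and for the nontrivial direction to route everything through the canonical-functional-model machinery of Section 2, using the Halmos unitary dilation of $T^{*}$ as the bridge between $T$ and the backward-shift model on $\cH(K_{\theta_T})$.

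For the implication $T\sim R \Rightarrow \theta_T\sim\theta_R$, suppose $W\colon\cX\to\widetilde\cX$ is a unitary with $WT_j=R_jW$ for each $j$. Then $W(I-TT^{*})W^{*}=I-RR^{*}$ and $(\oplus_{j=1}^{d}W)(I-T^{*}T)(\oplus_{j=1}^{d}W^{*})=I-R^{*}R$, so $\beta:=W|_{\cD_{T^{*}}}\colon\cD_{T^{*}}\to\cD_{R^{*}}$ and $\alpha:=(\oplus_{j=1}^{d}W)|_{\cD_T}\colon\cD_T\to\cD_R$ are unitary. A direct substitution in \eqref{6.2} then yields $\theta_R(z)=\beta\,\theta_T(z)\,\alpha^{*}$, i.e.\ coincidence of the characteristic functions.

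For the converse, I would start from the Halmos unitary dilation
$\bU_T=\sbm{T^{*} & D_T \\ D_{T^{*}} & -T}\colon\sbm{\cX\\ \cD_T}\to\sbm{\cX^{d}\\ \cD_{T^{*}}}$,
which is a unitary (hence weakly coisometric) colligation whose characteristic function equals $\theta_T$ (by \eqref{6.3}) and whose state operators $A_j=T_j^{*}$ commute because the $T_j$'s do. The first step is to verify that $\bU_T$ is observable: since $A^{*}Z_{\cX}(z)^{*}=\sum_j\bar z_j T_j$, expanding the resolvent $(I-A^{*}Z_{\cX}(z)^{*})^{-1}$ as a geometric series (the $T_j$'s commute) gives
\[
\cH^{\mathcal O}_{D_{T^{*}},T^{*}}=\bigvee_{n\in\bbZ_+^{d},\,y\in\cD_{T^{*}}}\bT^{n}D_{T^{*}}y,
\]
which equals $\cX$ precisely by the commutative \textbf{c.n.c.} condition \eqref{cnc-span-com}. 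Therefore $\bU_T$ is an observable weakly coisometric realization of $\theta_T$, so by Theorem \ref{T:2.4}(3) it is unitarily equivalent via some unitary $U_T\colon\cX\to\cH(K_{\theta_T})$ to a \textbf{c.f.m.}\ colligation $\bU_T^{\circ}$ for $\theta_T$. The intertwining relations \eqref{1.17} force the state operators of $\bU_T^{\circ}$ to commute, and I would invoke Theorem \ref{T:comreal} to conclude that $\cH(K_{\theta_T})$ is $\mathbf{M}_{\bz}^{*}$-invariant and that the state operators of any such commutative observable \textbf{c.f.m.}\ colligation are forced to coincide with the Drury-Arveson backward shifts, so that $U_TT_j^{*}=M_{z_j}^{*}|_{\cH(K_{\theta_T})}U_T$. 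The same construction applied to $R$ yields a unitary $U_R\colon\widetilde\cX\to\cH(K_{\theta_R})$ realizing $R_j^{*}$ as $M_{z_j}^{*}|_{\cH(K_{\theta_R})}$.

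To conclude, the coincidence $\theta_R=\beta\,\theta_T\,\alpha^{*}$ gives $K_{\theta_R}(z,\zeta)=\beta K_{\theta_T}(z,\zeta)\beta^{*}$, so the map $\tau\colon f\mapsto\beta\!\circ\! f$ is a unitary from $\cH(K_{\theta_T})$ onto $\cH(K_{\theta_R})$ commuting with every $M_{z_j}^{*}$. Then $W:=U_R^{-1}\tau U_T\colon\cX\to\widetilde\cX$ satisfies $WT_j^{*}=R_j^{*}W$ for $j=1,\dots,d$, and taking adjoints delivers $WT_j=R_jW$, completing the unitary equivalence. The main obstacle is the identification step: pinning down that the state operators of $\bU_T^{\circ}$ are exactly $M_{z_j}^{*}|_{\cH(K_{\theta_T})}$, not merely some commutative contractive solution of the Gleason problem. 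Remark \ref{R:2.3} shows that \textbf{c.f.m.}\ colligations are parametrized by choices of $A^{*}|_{\cD^{\perp}}$; the point is that observability together with commutativity and the explicit action of $A^{*}$ on the dense subspace $\cD$ (determined by $V$ in \eqref{2.5}) leaves no freedom, since the Drury-Arveson shift already satisfies these requirements and any other commutative observable choice is unitarily equivalent to it as a model on $\cH(K_{\theta_T})$.
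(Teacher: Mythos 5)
Your easy direction is correct. For the hard direction, your route is similar in spirit to the paper's---both hinge on recognizing the Halmos colligation $\bU_T = \sbm{T^* & D_T \\ D_{T^*} & -T}$ as an observable commutative weakly-coisometric realization of $\theta_T$ whenever $T$ is commutative and \textbf{c.n.c.}~(and your verification of observability via the power-series expansion and \eqref{cnc-span-com} is correct). However, there is a genuine gap at the identification step that you yourself flag but do not close: you claim that any commutative observable c.f.m.~colligation for $\theta_T$ must have state operators equal to $M_{z_j}^*|_{\cH(K_{\theta_T})}$. This does not follow from what you cite. Theorem \ref{T:comreal} asserts only the \emph{existence} of a commutative c.f.m.~colligation with the backward-shift state tuple, not that every commutative c.f.m.~colligation has it. Observability is automatic for every c.f.m.~colligation by part (2) of Theorem \ref{T:2.4}, so it imposes no extra constraint at all. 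And Remark \ref{R:2.3} shows $A^*|_{\cD^\perp}$ is a free contractive parameter, so the claim that ``observability together with commutativity and the explicit action of $A^*$ on $\cD$ leaves no freedom'' is precisely the uniqueness assertion that needs proof, not a self-evident consequence of those facts. The same issue reappears in your closing sentence: ``any other commutative observable choice is unitarily equivalent to it'' is the content of the key lemma, not an argument for it.

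The paper finesses this exact point by citing Corollary 3.7 of \cite{BBF2b}: it forms the two observable commutative colligations $\bU_1=\sbm{T^{*} & D_{T} \\  D_{T^*} & -T}$ and $\bU_2=\sbm{R^{*} & D_{R}\beta^* \\  \alpha D_{R^*} & -\alpha R\beta^*}$, notes that they have the same characteristic function and the same $D$-block (obtained by evaluating the coincidence at $z=0$), and then concludes directly that $T^*$ and $R^*$ are unitarily equivalent. That corollary is precisely the uniqueness-up-to-unitary-equivalence statement for observable commutative weakly-coisometric colligations that your argument presupposes. Either cite that result or prove it; without it, the step from ``$\bU_T^\circ$ is a commutative observable c.f.m.~colligation'' to ``$\bU_T^\circ$ has state tuple $(M_{z_1}^*,\dots,M_{z_d}^*)|_{\cH(K_{\theta_T})}$'' is unsupported.
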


\begin{proof}
We prove the nontrivial ``if'' part. We first observe that a
commutative 
row contraction $T$ is completely  non-coisometric
if and only if the colligation \eqref{6.3} is observable, i.e.,
$$
D_{T^*}(I_\cX-Z_{\cX}(z)T^*)^{-1}x\equiv 0\; \; \Longrightarrow \; \;
x=0.
$$
Indeed, the latter implication can be equivalently written as 
\begin{equation}
D_{T^*}{\bf T}^{*n}x=0 \quad\mbox{for all}\quad n\in{\mathbb Z}_+^d\;
\;
\Longrightarrow \; \; x=0,
\label{6.6}
\end{equation}
as can be seen from the expansion
$$
D_{T^*}(I_\cX-Z_{\cX}(z)T^*)^{-1}=D_{T^*}\left(I_\cX-\sum_{j=1}^d
z_jT^*_j\right)^{-1}
=\sum_{n\in{\mathbb Z}_+^d}\frac{|n|!}{n!}D_{T^*}{\bf T}^{*n}z^n,
$$
where we have used notation \eqref{mnot} and \eqref{power}. On the
other hand,
$$
{\mathcal M}:=\left\{x\in\cX: \; D_{T^*}{\bf T}^{*n}x=0 \quad\mbox{for
all}\quad n\in{\mathbb Z}_+^d\right\}
$$
is the maximal ${\bf T}^*$-invariant subspace of $\cX$ such that the
operator \eqref{6.4} is a coisometry. Combining this with
observability
characterization \eqref{6.6} and the definition of a ${\bf c.n.c.}$
tuple, we get the desired equivalence.

\smallskip

Let us  assume that $\theta_T$ and $\theta_R$ coincide, i.e., that 
\begin{equation}
\theta_T(z)=\alpha \theta_R(z)\beta^*
\label{6.13}   
\end{equation} 
where $\alpha \colon {\mathcal D}_{R^{*}} \to {\mathcal D}_{T^{*}}$
and
$\beta \colon{\mathcal D}_{R} \to {\mathcal D}_{T}$ are unitary
operators.
Thus,
\begin{align*}
\theta_T(z)&=\left.\left( -T + D_{T^*}(I_{\cX} -
Z_{\cX}(z)T^{*})^{-1}Z_{\cX}(z)
D_{T}\right)\right|_{{\mathcal D}_{T}}\\
&=\left.\left( -\alpha R\beta^* + \alpha D_{R^{*}}(I_{\widetilde\cX} -
Z_{\widetilde\cX}(z)T^{*})^{-1}Z_{\widetilde\cX}(z) 
D_{R}\beta^*\right)\right|_{{\mathcal D}_{R}}
\end{align*}
and we have two commutative unitary colligations
\begin{equation}
\bU_1=\begin{bmatrix}T^{*} & D_{T} \\  D_{T^*} &
-T\end{bmatrix}\quad\mbox{and}\quad
\bU_2=\begin{bmatrix}R^{*} & D_{R}\beta^* \\  \alpha D_{R^*} &
-\alpha R\beta^*\end{bmatrix}
\label{6.7}
\end{equation}
with the same input and output spaces and with the same
characteristic  function $\theta_{T}$.  

Since $T$ and $R$ are completely non-coisometric, these colligations are both
observable. As the lower diagonal entries in $\bU_1$ and $\bU_2$ are equal (evaluate
\eqref{6.13} at $z=0$), Corollary 3.7 from \cite{BBF2b} implies that $T^*$ is unitarily
equivalent to $R^*$.
\end{proof} 

\smallskip

We now discuss how our \textbf{t.c.f.m.}~colligations can be used to 
study unitary equivalence and unitary invariants for row contractions 
more general that \textbf{c.n.c.} Before proceeding further, 
let us recall that any 
unitary realization $\widetilde{\bU}=\sbm{\tA & \tB \\ \tC & D}$
\eqref{4.68} of an $S\in\cS_d(\cU,\cY)$ induces an Agler
decomposition \eqref{4.3}
for $S$ via formulas \eqref{4.74}, \eqref{4.75}. If $T$ is a row 
contraction, then the unitary realization $\bU_T$ \eqref{6.3} for 
$\theta_T$ induces the Agler decomposition  
\begin{equation}
{\mathbb K}_T(z,\zeta)={\mathbb G}_T(z){\mathbb 
G}_T(\zeta)^*,\quad\mbox{where}\quad 
{\mathbb G}_T(z)=\left[\begin{array}{c}
D_{T^*}(I_\cX-Z_{\cX}(z)T^*)^{-1} \\
D_{T}(I_{\cX^{d}}-Z_{\cX}(z)^*T)^{-1}{\mathcal I}_{1} \\ \vdots \\
D_{T}(I_{\cX^{d}}-Z_{\cX}(z)^*T)^{-1}{\mathcal I}_{d} \end{array}\right].
\label{6.11}
\end{equation}
Moreover, in case the Halmos-dilation colligation \eqref{6.3} is 
closely connected, Theorem \ref{T:4.7} tells us that $\bU_{T}$ is 
unitarily equivalent to some \textbf{t.c.f.m.}~colligation associated 
with the Agler decomposition ${\mathbb K}_{T}$ \eqref{6.11} for 
$\theta_{T}$.  Let us write $\cD(T)$ and $\cR(T)$ for the domain and 
range of the isometry $V$ given by \eqref{4.11} for the case where 
${\mathbb K} = {\mathbb K}_{T}$; we also write $V_{T}$ rather than 
$V$ for this case.
Then any \textbf{t.c.f.m.}~colligation associated with ${\mathbb 
K}_{T}$ has the 
form
\begin{equation}   \label{bU1}
    \bU = \begin{bmatrix} A & B  \\ C & D \end{bmatrix} \colon 
    \begin{bmatrix} \cH({\mathbb K}) \\ \cD_{T} \end{bmatrix} \to
 \begin{bmatrix} \cH({\mathbb K})^{d} \\ \cD_{T^{*}} \end{bmatrix}
\end{equation}
where, upon the identifications 
$$
\begin{bmatrix} \cH({\mathbb 
K}) \\ \cD_{T} \end{bmatrix}  \sim  
\begin{bmatrix} \cR(T)^{\perp} \\ \cR(T) \oplus \cD_{T} 
\end{bmatrix}  \text{ and } 
 \begin{bmatrix} \cH({\mathbb K})^{d} \\ \cD_{T^{*}} \end{bmatrix}
\sim  
 \begin{bmatrix} \cD(T)^{\perp} \\ \cD(T) \oplus \cD_{T^{*}} 
\end{bmatrix} 
$$ 
as in Lemma \ref{L:2.5}, $\bU^{*}$ has the form
\begin{equation}   \label{bU2}
    \bU^{*} = \begin{bmatrix} X_{T} & 0 \\ 0 & V_{T} \end{bmatrix} 
    \colon
 \begin{bmatrix} \cD(T)^{\perp} \\ \cD(T) \oplus \cD_{T^{*}}
\end{bmatrix}
  \to \begin{bmatrix} \cR(T)^{\perp} \\ \cR(T) \oplus \cD_{T}
\end{bmatrix}.
\end{equation}
The fact that $\bU_{T}$ is unitary implies that $\bU$ is unitary and
hence also $\dim \cD(T)^{\perp} = \dim \cR(T)^{\perp}$ and $X_{T} \colon
\cD^{\perp}
\to \cR^{\perp}$ is unitary.  We conclude:

\begin{proposition}
\label{P:prel}
Given a row-contraction $T$ such that the the Halmos-dilation colligation \eqref{6.3} is
closely connected and given an Agler decomposition
${\mathbb K}_{T}$ for $\theta_{T}$, there is a choice of unitary
$X_{T}$ from $\cD(T)^{\perp}$ to $\cR(T)^{\perp}$ so that $T$ is unitarily
equivalent to $A(T)^{*}$, where $A(T)$ is
determined just from $(\theta_{T}, {\mathbb K}_{T}, X_{T})$ via the
decomposition \eqref{bU1} for $\bU$ defined by \eqref{bU2}.
\end{proposition}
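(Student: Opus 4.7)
The plan is to apply Theorem~\ref{T:4.7}(3) to the Halmos-dilation colligation $\bU_T$ of \eqref{6.3}, viewed as a unitary (hence weakly unitary) colligation with characteristic function $\theta_T$, state space $\cX$, and state-space operator $\widetilde A=T^*$, together with $\widetilde B=D_T$ and $\widetilde C=D_{T^*}$.  Since $\bU_T$ is closely connected by hypothesis, Theorem~\ref{T:4.7}(3) supplies a unitary $U\colon\cX\to\cH({\mathbb K})$ and a \tcfm{} colligation $\bU=\sbm{A&B\\C&D}$ associated with the Agler decomposition induced from $\bU_T$ by \eqref{4.74}--\eqref{4.75}, such that $\bU_T$ is unitarily equivalent to $\bU$ through $U$.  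Direct substitution of $\widetilde A=T^*$, $\widetilde B=D_T$, $\widetilde C=D_{T^*}$ into \eqref{4.74}--\eqref{4.75} (equivalently, into \eqref{4.3'}--\eqref{4.3'''}) identifies the resulting kernel with ${\mathbb G}_T(z){\mathbb G}_T(\zeta)^*$, i.e., with the given ${\mathbb K}_T$ of \eqref{6.11}; hence $\cH({\mathbb K})=\cH({\mathbb K}_T)$.

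Having fixed the Agler decomposition ${\mathbb K}_T$, I invoke Lemma~\ref{L:2.5}: every \tcfm{} colligation associated with ${\mathbb K}_T$ has the form \eqref{bU1}--\eqref{bU2} with some contraction $X_T\colon\cD(T)^\perp\to\cR(T)^\perp$ occupying the $(1,1)$-corner of $\bU^*$.  Because $\bU$ is unitarily equivalent to the unitary $\bU_T$, $\bU$ is itself unitary, and the last sentence of Lemma~\ref{L:2.5} then forces $X_T$ to be unitary; in particular $\dim\cD(T)^\perp=\dim\cR(T)^\perp$.  This produces the required unitary parameter $X_T$ and shows that once $(\theta_T,{\mathbb K}_T,X_T)$ is fixed, the entire colligation $\bU$---and in particular its $(1,1)$-block $A$---is completely specified by \eqref{bU2}.

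To extract the operator-level equivalence $T\sim A^*$, I read off the unitary-equivalence relations from Theorem~\ref{T:4.7}(3) (Step~3 of its proof, or specialize \eqref{1.17}): $AU=(\oplus_{j=1}^d U)T^*$, $B=(\oplus_{j=1}^d U)D_T$, and $CU=D_{T^*}$.  Multiplying the first relation on the left by $(\oplus_{j=1}^d U^*)$ gives $T^*=(\oplus_{j=1}^d U^*)AU$; taking adjoints componentwise with $A=\sbm{A_1\\ \vdots\\ A_d}$ yields $T_j=U^*A_j^*U$ for each $j$, i.e., $T=U^*A^*(\oplus_{j=1}^d U)$ in the row-contraction sense.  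Setting $A(T):=A$, which by the previous paragraph is determined solely by $(\theta_T,{\mathbb K}_T,X_T)$, we conclude that $T$ is unitarily equivalent to $A(T)^*$.  The only genuinely substantive step is the kernel identification in the first paragraph, which is a direct comparison of formulas; everything else is a straightforward assembly of Theorem~\ref{T:4.7}(3), Lemma~\ref{L:2.5}, and the definition of unitary equivalence of colligations.
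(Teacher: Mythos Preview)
Your proof is correct and follows essentially the same approach as the paper: the paper's argument is the discussion immediately preceding the proposition (the proposition is stated as a conclusion of that discussion), and you have reconstructed precisely those steps—apply Theorem~\ref{T:4.7}(3) to the unitary, closely connected colligation $\bU_T$, identify the induced Agler decomposition with ${\mathbb K}_T$ of \eqref{6.11}, invoke Lemma~\ref{L:2.5} to extract the parameter $X_T$, and use unitarity of $\bU$ to force $X_T$ unitary.
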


This suggests that we define a new invariant consisting of a triple 
of objects $(S, {\mathbb K}, X)$ defined 
as follows.  We let $S$ be any pure Schur-class function in 
$\cS_{d}(\cU, \cY)$.  We then let ${\mathbb K}$ be any Agler 
decomposition for $S$.  The remaining ingredient to form a 
\textbf{t.c.f.m.}~colligation associated with the Agler decomposition 
${\mathbb K}$ for $S$ is a choice of contraction operator $X \colon 
\cD^{\perp} \to \cR^{\perp}$. Part of our admissibility requirements on $(S, {\mathbb 
K}, X)$ is that 

\smallskip

(1) {\em  it turns out that $\dim \cD^{\perp} = \dim 
\cR^{\perp}$}. 

\smallskip

In this case there exists a unitary operator $X \colon 
\cD^{\perp} \to \cR^{\perp}$ which then defines completely a 
\textbf{t.c.f.m.}~ associated with ${\mathbb K}$ and $S$ which gives 
a {\em unitary} realization of $S$.  As the 
final admissibility requirement, we demand that 

\smallskip

(2) {\em the choice of unitary 
$X \colon \cD^{\perp} \to \cR^{\perp}$ is such that the $d$-tuple of 
operators $(A_{1}, \dots, A_{d})$ constructed from the decomposition 
\eqref{bU1} for $\bU$ is commutative}.  

\smallskip

Let us call any such triple 
$(S, {\mathbb K}, X)$ (consisting of a pure Schur-class function $S$,
an 
Agler decomposition ${\mathbb K}$ for $S$, and a unitary operator $X 
\colon \cD^{\perp}$ to $\cR^{\perp}$ such that the admissibility 
requirements (1) and (2) are also satisfied) an {\em admissible
triple}. In the discussion above we explained how to attach a particular 
admissible triple $(\theta_{T}, {\mathbb K}_{T}, X_{T})$ (the {\em 
characteristic admissible triple} of $T$) to any commutative row
contraction  $T$ for which $\bU_{T}$ is closely connected.  

It is not difficult to characterize  when the colligation $\bU_{T}$ 
is closely connected directly in terms of $T$.  Toward this end, 
given a  commutative row contraction $T$, introduce the subspace
\begin{align}   
    {\mathcal M}^{(1)}_T = &  \left\{x\in\cX \colon   
    D_{T^*}(I_{\cX}-Z_{\cX}(z)T^*)^{-1}x\equiv 0 
\text{ and } \right. \notag \\ 
  & \quad D_{T}(I_{\cX^{d}}-Z_{\cX}(z)^*T)^{-1}{\mathcal I}_{i}x\equiv 0 \; 
 \text{ for } \; i=1, \dots, d \}.
 \label{5.15}
\end{align}
The space ${\mathcal M}^{(1)}_T$ is the orthogonal 
complement in $\cX$ of the space $ \cH^{\mathcal 
O}_{D_{T^*},T^*}\bigvee\cH^{\mathcal C}_{T^*,D_{T}}$ (see definitions 
\eqref{1.15}); thus ${\mathcal M}^{(1)}_T=\{0\}$ if and only if 
the colligation \eqref{6.3} is closely connected. For the case $d=1$, 
the condition ${\mathcal M}^{(1)}_T=\{0\}$ simply means that $T$ is 
completely nonunitary.  With this as motivation, we make the 
following definition.

\begin{definition}  \label{D:6.5'} We say that the commutative row-contraction 
$T=\begin{bmatrix} T_1 & \dots & T_d\end{bmatrix}$ is {\em 
closely connected} (\textbf{c.c.}) 
   if ${\mathcal M}^{(1)}_T=\{0\}$ where $\cM_{T}^{(1)}$ is as in 
   \eqref{5.15}.
Equivalently, $T$ is  \textbf{c.c.}~if and only if
\begin{equation}  \label{span-cc}
    \cX = \left\{ \operatorname{Ran} {\mathbf T}^{n}D_{T^{*}}, \, 
    \operatorname{Ran} {\mathcal I}_{k}^{*}X_{n}D_{T}\colon n \in 
    {\mathbb Z}^{d}_{+}, k=1, \dots, d \right\}
 \end{equation}
 where $X_{n}$ is given by 
 \begin{equation}   \label{Xn}
     (I_{\cX^{d}} - T^{*} Z_{\cX}(z))^{-1} = 
    \sum_{n \in {\mathbb Z}^{d}_{+}} X_{n} z^{n}.
 \end{equation}
   \end{definition}
We note that the $X_{n}$'s in \eqref{Xn} are difficult to compute 
explicitly in general.  Nevertheless it is clear that
\begin{align}
&  \bigvee \left\{ \operatorname{Ran}{\mathbf T}^{n} D_{T^{*}} \colon n 
\in {\mathbb Z}^{d}_{+} \right\} \notag \\
& \quad \subset \bigvee \left\{ \operatorname{Ran}{\mathbf T}^{n} D_{T^{*}}, 
\operatorname{Ran}{\mathcal I}_{k}^{*} X_{n}D_{T} \colon n \in {\mathbb 
Z}^{d}_{+}, k=1, \dots, d \right\}  
\notag \\
& \quad \subset \bigvee \left\{ \operatorname{Ran}{\mathbf T}^{*n} D_{T^{*}}, 
\operatorname{Ran} {\mathbf T}^{n} {\mathbf T}^{*m} {\mathcal I}_{k} D_{T}\colon n, 
m \in {\mathbb Z}^{d}_{+},\, k=1, \dots, d  \right\} = \cX,
\label{chain1}
\end{align}
from which it follows that 
\textbf{c.n.c.}~$\Rightarrow$ \textbf{c.c.}~$\Rightarrow$ 
\textbf{c.n.u.}~(since \textbf{c.n.u.}~holds for {\em any} 
commutative row contraction by Remark \ref{R:com-vs-nc}).
Henceforth, unless otherwise stipulated, we assume that 
{\em $T$ is a \textbf{c.c.}~commutative row contraction}.

\smallskip

We next observe that if two \textbf{c.c.}~commutative row
contractions $T$  and $R$ are unitarily 
equivalent, then  the associated Agler decompositions ${\mathbb K}_T$ 
and ${\mathbb K}_R$ together with the
characteristic functions $\theta_{T}$ and $\theta_{R}$ defined as 
in \eqref{6.11} jointly coincide in the sense that 
\begin{equation}
\theta_{T}(z) = \alpha \theta_{R}(z) \beta^{*}, \quad
{\mathbb K}_T(z,\zeta)= \begin{bmatrix}\alpha & 0 \\ 0 &
\bigoplus_1^d 
\beta\end{bmatrix}{\mathbb K}_R(z,\zeta)\begin{bmatrix}\alpha^* & 0
\\ 0 & 
\bigoplus_1^d \beta^*\end{bmatrix}
\label{6.12}
\end{equation}
for some unitary $\alpha \colon {\mathcal D}_{R^{*}} \to {\mathcal 
D}_{T^{*}}$ and $\beta \colon{\mathcal D}_{R} \to {\mathcal D}_{T}$.
Indeed, if $T_i=URU^*$ for a unitary $U: \, \widetilde{\cX}\to \cX$,
then
\eqref{6.12} holds with $\alpha=U\vert_{{\mathcal D}_{T^*}}$ and 
$\beta=\bigoplus_1^d U\vert_{{\mathcal D}_{T}}$. Moreover,
it is easy to see that the unitary operators $X_{T} \colon
\cD(T)^{\perp} 
\to \cR(T)^{\perp}$ and $X_{R} \colon \cD(R)^{\perp} \to
\cR(R)^{\perp}$ 
are unitarily equivalent.  This suggests that we define an
equivalence relation on
admissible triples:  {\em we say that the two admissible triples 
$(S, {\mathbb K}, X)$ and $(S', {\mathbb K}', X')$ are {\em
equivalent} if 

\smallskip

(i) $(S, {\mathbb K})$ and $(S', {\mathbb K}')$ jointly coincide, and 

\smallskip

(ii) $X \colon \cD^{\perp} \to \cR^{\perp}$ and $X' \colon 
\cD^{\prime \perp} \to \cR^{\prime \perp}$ are unitarily equivalent.}

\smallskip

The discussion above shows that the equivalence class of
$(\theta_{T}, {\mathbb K}_{T}, X_{T})$ is a unitary invariant for any 
\textbf{c.c.}~commutative row contraction $T$.  The next result 
gives the converse.

\begin{theorem}  \label{T:invtriple}
    Suppose that $T$ and $R$ are two \textbf{c.c.}~commutative row 
    contractions such that the associated characteristic triples 
    $(\theta_{T}, {\mathbb K}_{T}, X_{T})$ and $(\theta_{R}, 
    {\mathbb K}_{R}, X_{R})$ are equivalent as admissible triples.
Then $T$ and $R$ are unitarily equivalent.  
\end{theorem}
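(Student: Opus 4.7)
The plan is to reduce unitary equivalence of $T$ and $R$ to unitary equivalence of the t.c.f.m.\ colligations $\bU_T$ and $\bU_R$ attached to their characteristic admissible triples, and then to assemble the required intertwining unitary from the three ingredients of the triples: the Schur multiplier $\theta$, the Agler decomposition ${\mathbb K}$, and the defect contraction $X$.

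First I invoke Proposition~\ref{P:prel} to replace $T$ and $R$ by their t.c.f.m.\ models: $T\cong A(T)^{*}$ on $\cH({\mathbb K}_{T})$ and $R\cong A(R)^{*}$ on $\cH({\mathbb K}_{R})$, where $A(T),A(R)$ are the state tuples extracted from $\bU_{T},\bU_{R}$ via \eqref{bU1}--\eqref{bU2}. It thus suffices to produce a unitary $U\colon \cH({\mathbb K}_{R})\to\cH({\mathbb K}_{T})$ with $UA(R)^{*}=A(T)^{*}(\oplus_{j=1}^{d}U)$. Via the block decomposition $\bU^{*}=X\oplus V$, this requirement splits into intertwining $V_{R}\to V_{T}$ on the $\cD(R)\oplus\cD_{R^{*}}$ side and $X_{R}\to X_{T}$ on the $\cD(R)^{\perp}$ side. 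The joint coincidence of $(\theta_{T},{\mathbb K}_{T})$ with $(\theta_{R},{\mathbb K}_{R})$ via unitaries $\alpha\colon\cD_{R^{*}}\to\cD_{T^{*}}$ and $\beta\colon\cD_{R}\to\cD_{T}$ yields a natural pointwise unitary $U_{0}\colon\cH({\mathbb K}_{R})\to\cH({\mathbb K}_{T})$ defined by $(U_{0}f)(z)=\operatorname{diag}(\alpha,\oplus_{1}^{d}\beta)f(z)$; the second identity in \eqref{6.12} guarantees that $U_{0}$ is a well-defined unitary. From the kernel-function spanning sets \eqref{4.13}, \eqref{4.14} one checks that $U_{0}$ sends $\cR(R)$ unitarily onto $\cR(T)$ and $\oplus U_{0}$ sends $\cD(R)$ unitarily onto $\cD(T)$, and that together with $\alpha,\beta$ the operator $U_{0}$ intertwines $V_{R}$ with $V_{T}$ by inspection of \eqref{4.11}.

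The final step is to adjust $U_{0}$ on the defect subspace $\cR(R)^{\perp}$ so that the ampliation $\oplus U$ also intertwines $X_{R}$ with $X_{T}$. The equivalence of admissible triples supplies unitaries $\phi\colon\cD(R)^{\perp}\to\cD(T)^{\perp}$ and $\psi\colon\cR(R)^{\perp}\to\cR(T)^{\perp}$ with $X_{T}\phi=\psi X_{R}$. Setting $U:=U_{0}|_{\cR(R)}\oplus\psi$ leaves the $V$-intertwining in place, and it remains to verify that $(\oplus U)|_{\cD(R)^{\perp}}=\phi$ while $(\oplus U)|_{\cD(R)}=(\oplus U_{0})|_{\cD(R)}$; with these identities, $\bU$-compatibility yields $A(R)^{*}\cong A(T)^{*}$ and hence $T\cong R$. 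The main obstacle is precisely this last compatibility: the ampliation map $U\mapsto\oplus U$ couples the coordinates of $\cH({\mathbb K}_{R})^{d}$, and in view of the description \eqref{4.15}, $\cD(R)^{\perp}$ is not an orthogonal direct sum of copies of $\cR(R)^{\perp}$, so an arbitrary choice of $\psi$ need not produce an ampliation that restricts to $\phi$ on $\cD(R)^{\perp}$ or preserves $\cD(R)$. Overcoming this will require exploiting the freedom in $\phi,\psi$ (determined only up to the unitary commutants of $X_{R},X_{T}$) together with the structural constraints from \eqref{4.15}, \eqref{4.16} and the commutativity condition~(2) in the definition of admissibility, so as to select $\phi,\psi$ in a special ampliated form compatible with the decomposition $\cH({\mathbb K})^{d}=\cD\oplus\cD^{\perp}$.
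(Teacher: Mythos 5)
Your plan is exactly the paper's: pass to the model operators $A(T)^{*}$ and $A(R)^{*}$ via Proposition~\ref{P:prel}, use joint coincidence of $(\theta_{T},{\mathbb K}_{T})$ and $(\theta_{R},{\mathbb K}_{R})$ to identify the two state spaces (the paper normalizes so that $\theta_{T}=\theta_{R}$ and ${\mathbb K}_{T}={\mathbb K}_{R}$, whence $V_{T}=V_{R}$; your pointwise $U_{0}$ implements this identification), and then invoke the equivalence of $X_{T}$ and $X_{R}$ to conclude $A(T)\cong A(R)$.

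The obstacle you flag at the end, however, is a real one and is passed over in the paper as well: the published proof simply asserts that ``the assumption that $X_{T}$ is unitarily equivalent to $X_{R}$ then implies that $A(T)$ is unitarily equivalent to $A(R)$,'' with no further argument. The structured unitary equivalence \eqref{1.17} requires a single unitary $W$ on $\cH({\mathbb K})$ whose ampliation $\oplus_{j}W$ preserves the splitting $\cH({\mathbb K})^{d}=\cD\oplus\cD^{\perp}$, restricts to the given $\phi$ on $\cD^{\perp}$, and acts as the identity on $\cD$ so as to fix $A_{V}$; since $\cD^{\perp}$ as described in \eqref{4.15} is not itself an ampliated (coordinate-wise) subspace of $\cH({\mathbb K})^{d}$, arbitrary unitaries $\phi,\psi$ satisfying $\psi X_{R}=X_{T}\phi$ need not arise from such a $W$. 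The intended reading is evidently that the ``unitarily equivalent'' in clause (ii) of the equivalence of admissible triples means the intertwining unitaries are of the ampliated form produced in the necessity direction (where $T\cong R$ furnishes exactly such a structured $W$); with that stronger reading the step is immediate, but the paper's definition as literally stated does not enforce it. So your proposal faithfully reconstructs the paper's argument and correctly isolates, without closing, the same soft spot.
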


\begin{proof}  We have seen that $T$ is unitarily equivalent to 
 $A(T)^{*}$ and $R$ is unitarily equivalent to $A(R)^{*}$ where
    $A(T)$ (respectively $A(R)$)
    appears in the \textbf{t.c.f.m.}~colligation  
    $\bU(T)$ (respectively $\bU(R)$) \eqref{bU1} and \eqref{bU2}
associated with 
    $(\theta_{T}, {\mathbb K}_{T}, X_{T})$ (respectively 
    $(\theta_{R}, {\mathbb K}_{R}, X_{R})$). By applying unitary 
    changes of bases on the input and output spaces
    coming from the joint coincidence of 
    $(\theta_{T}, {\mathbb K}_{T})$ and 
    $(\theta_{R}, {\mathbb K}_{R}$) in the input and output spaces, 
    we may even assume that $\theta_{T} = \theta_{R}$ and 
    ${\mathbb K}_{T} = {\mathbb K}_{R}$.  
    It then follows that $V_{T} = V_{R}$ ($V_{T}$ as in \eqref{bU2}), 
    and the assumption that $X_{T}$ is unitarily equivalent to $X_{R}$
    then implies that  $A(T)$ is unitarily equivalent to $A(R)$.  As $Y$ is unitarily 
    equivalent to $A(T)^{*}$ and $R$ is unitarily equivalent to 
    $A(R)^{*}$, it now follows that $T$  and $R$ are unitarily 
    equivalent to each other.
 \end{proof}
 
 \begin{remark} {\em  Note that if $(S, {\mathbb K}, X)$ is an 
     admissible triple with associated \textbf{t.c.f.m.} colligation
     $\left[ \begin{smallmatrix} A & B \\ C & D \end{smallmatrix} 
     \right]$, then $A^{*} =  \begin{bmatrix} A_{1}^{*} & \cdots & 
     A_{d}^{*} \end{bmatrix}$ is a \textbf{c.c.}~commutative row 
     contraction.  Moreover, if $(S', {\mathbb K}', X')$ is another 
     admissible triple, that $A^{*}$ and $A^{\prime *}$ are unitarily 
     equivalent if and only if the associated admissible triples $(S, 
     {\mathbb K}, X)$ and $(S', {\mathbb K}', X')$ are equivalent as 
     triples. Thus admissible triples can be viewed as providing a 
     parametrization of \textbf{c.c.}~commutative row contractions.
     This parametrization is somewhat crude, however, since there is 
     no explicit way (1) to write down all the Agler decompositions 
     ${\mathbb K}$ associated with $S$, and (2) pick out among them 
     which ${\mathbb K}$ lead to reproducing kernel spaces 
     $\cH({\mathbb K})$ so that (a) the dimension criterion $\dim 
     \cD^{\perp} = \dim \cR^{\perp}$ holds, and (b) there exist a 
     unitary $X \colon \cD^{\perp} \to \cR^{\perp}$ giving rise to a 
     commutative $A$ in the associated \textbf{t.c.f.m.} colligation.

     In the \textbf{c.n.c.}~case as developed in \cite{BES, BES2}, 
     the complete invariant for a \textbf{c.n.c.}~commutative row 
     contraction $T$ is just the characteristic function $\theta_{T}$
     and there is a version of the Sz.-Nagy-Foias functional model space. 
     Even in this case, there remains the issue of characterizing 
     which pure Schur-class functions $S$ coincide with the characteristic 
     function $\theta_{T}$ of a \textbf{c.n.c.}~commutative row .
     contraction;
     the following partial result on this issue  follows by combining 
     Theorem \ref{T:comreal} above with Proposition 6.1 in \cite{BBF2b}. 
     
     \begin{theorem}
If the function $S \in {\mathcal S}_{d}(\cU, \cY)$
coincides with a characteristic function  $\theta_T$ of a
commutative {\bf c.n.c.}~row contraction $T$, then 
$S$ is pure, the space $\cH(K_S)$  is ${\bf M}_z^*$-invariant,
inequality \eqref{3.1} holds, and finally,
\begin{equation}
\dim \, {\rm Ker} A^*\vert_{{\mathcal D}^\perp}=\dim \,\cU_S^0,
\label{1}
\end{equation}
where
$$
A=\begin{bmatrix}M_{z_1}^*\vert_{\cH(K_{S})} \\ \vdots \\
M_{z_d}^*\vert_{\cH(K_{S})}\end{bmatrix},\quad
\cU_S^0=\{u\in\cU: \, S(z)u\equiv 0 \}.
$$
\label{T:charfunc}
\end{theorem}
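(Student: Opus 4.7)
The plan is to realize $S$ as the characteristic function of a modified Halmos-dilation colligation, transfer the resulting structure to a \textbf{c.f.m.}~colligation on $\cH(K_S)$ via Theorems \ref{T:2.4} and \ref{T:comreal}, and then exploit unitarity of that colligation to compute $\ker A^*|_{\cD^\perp}$.

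First, writing the coincidence $S=\alpha\theta_T\beta$ with unitaries $\alpha\colon\cD_{T^*}\to\cY$ and $\beta\colon\cU\to\cD_T$, I form
$$\bU_T'=\begin{bmatrix}T^* & D_T\beta \\ \alpha D_{T^*} & -\alpha T\beta\end{bmatrix}\colon\;\begin{bmatrix}\cX \\ \cU\end{bmatrix}\to\begin{bmatrix}\cX^d \\ \cY\end{bmatrix}.$$
As the composition of the unitary Halmos dilation \eqref{6.3} with the input/output unitaries $\beta,\alpha$, $\bU_T'$ is unitary and has characteristic function $S$; its state-space column has commuting entries $T_1^*,\ldots,T_d^*$. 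Moreover, expanding $(I-A^*Z_\cX(z)^*)^{-1}C^*=\bigl(I-\sum_j\bar z_j T_j\bigr)^{-1}D_{T^*}\alpha^*$ as a power series in $\bar z$ identifies the observability subspace \eqref{1.14} of $\bU_T'$ with $\bigvee{\rm Ran}\,\bT^n D_{T^*}$, so the \textbf{c.n.c.}~hypothesis together with \eqref{cnc-span-com} gives observability of $\bU_T'$.

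Second, conclusions (1)--(3) now follow quickly. Being unitary, $\bU_T'$ is a fortiori weakly coisometric and commutative, so Theorem \ref{T:comreal} yields the ${\bf M}^*_\bz$-invariance of $\cH(K_S)$ and the inequality \eqref{3.1}. For purity, $S(0)=-\alpha T\beta|_\cU$; if $\|S(0)u\|=\|u\|$, then $v:=\beta u\in\cD_T$ satisfies $\|Tv\|=\|v\|$, hence $\langle(I-T^*T)v,v\rangle=0$, hence $D_T v=0$; since $v\in\cD_T=\overline{{\rm Ran}}\,D_T=(\ker D_T)^\perp$, this forces $v=0$, hence $u=0$. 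For (4), I construct a unitary \textbf{c.f.m.}~colligation $\bU=\sbm{A & B \\ C & D}$ for $S$ unitarily equivalent to $\bU_T'$: Theorem \ref{T:2.4}(3) supplies such a $\bU$, unitarity is preserved by the equivalence, and Theorem \ref{T:comreal} forces $A_j=M_{z_j}^*|_{\cH(K_S)}$, matching the $A$ in the statement. I then establish $\ker A^*|_{\cD^\perp}=B(\cU_S^0)$ with $B|_{\cU_S^0}$ injective. For the inclusion $B(\cU_S^0)\subseteq\ker A^*|_{\cD^\perp}$: if $u\in\cU_S^0$, the Gleason identity \eqref{2.7} gives $Z_\cY(z)(Bu)(z)=S(z)u-S(0)u=0$, so $Bu\in\cD^\perp$ by \eqref{2.9a}; and from $\bU^*\bU=I$ we have $A^*B=-C^*D$, whence $A^*Bu=-C^*S(0)u=0$. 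Conversely, if $g\in\cD^\perp$ and $A^*g=0$, then $\bU^*\sbm{g\\0}=\sbm{0\\ B^*g}$, so $\bU\bU^*=I$ gives $BB^*g=g$ and $DB^*g=0$; with $u:=B^*g$, we have $Bu=g\in\cD^\perp$ and $S(0)u=Du=0$, and the Gleason identity then reads $S(z)u=Z_\cY(z)(Bu)(z)+S(0)u=0$, placing $u\in\cU_S^0$. Injectivity of $B|_{\cU_S^0}$ follows because $B^*B+D^*D=I$ makes $\sbm{B\\D}$ isometric.

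The main obstacle I anticipate is the converse inclusion in step three: it requires the genuine unitarity of $\bU$ (not merely contractivity or the defining \textbf{c.f.m.}~conditions) to produce the identity $BB^*g=g$ that exhibits $g$ as an element of $B(\cU_S^0)$. This is precisely what forces the construction to begin with the unitary $\bU_T'$ rather than with an arbitrary weakly coisometric realization of $S$; the rest of the argument is then a matter of unpacking the reproducing-kernel and Gleason formalism in tandem with the four block identities produced by $\bU^*\bU=\bU\bU^*=I$.
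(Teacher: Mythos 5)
Your proof is correct, and it reconstructs a complete argument where the paper merely cites Theorem~\ref{T:comreal} together with an external Proposition 6.1 from \cite{BBF2b}. Your main tools are the same as the paper's (Theorem~\ref{T:comreal} supplies items (2) and (3), and the commutative \textbf{c.f.m.}~framework from Theorems~\ref{T:2.4} and \ref{T:comreal} supplies the state-space operator $A$), but you flesh out the dimension count \eqref{1} with a direct argument instead of the outside reference.

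A few comments on the details. The passage from $T$ \textbf{c.n.c.}~to observability of $\bU_T'$ via the power-series expansion of $\left(I_\cX - \sum_j \overline z_j T_j\right)^{-1}D_{T^*}$ is exactly the identification \eqref{cnc-span-com}, so that is sound; note this same equivalence is recorded in the proof of Theorem~\ref{P:6.4} in the paper. Your purity argument from $D_T v = 0$ and $v \in \cD_T = (\ker D_T)^\perp$ is clean. The one step that merits extra care is your assertion that the commutative \textbf{c.f.m.}~colligation unitarily equivalent to $\bU_T'$ must have $A_j = M_{z_j}^*|_{\cH(K_S)}$: this is indeed what the last sentence of Theorem~\ref{T:comreal} asserts (it is a rigidity statement for commutative \textbf{c.f.m.}~colligations, not merely an existence statement), and combined with Theorem~\ref{T:2.4}(3) it pins down the $A$ in the statement. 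Finally, your two-sided computation of $\ker A^*|_{\cD^\perp}$ from the block identities of $\bU^*\bU = I$ and $\bU\bU^* = I$, together with the Gleason identity $Z_\cY(z)(Bu)(z) = S(z)u - S(0)u$ and the characterization \eqref{2.9a} of $\cD^\perp$, is a clean and self-contained route to \eqref{1}. You correctly emphasize that the converse inclusion genuinely uses $\bU\bU^*=I$ (not mere contractivity), which is the reason for insisting on the unitary realization $\bU_T'$ rather than an arbitrary weakly coisometric one.
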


     In general the construction of Agler decompositions ${\mathbb 
     K}$ from a given Schur-class function $S$ is 
     mysterious.\footnote{However one 
     case where a multitude of Agler decompositions can be written 
     down for a given function $S = 0$ is presented as Example 
     \ref{E:spherical} below.} In particular, 
     we do not know any characterization of when the Agler 
     decomposition is unique.  If it were the case 
     that there is a unique Agler decomposition in case $S = 
     \theta_{T}$ with $T$ \textbf{c.n.c.}, then one could see Theorem 
     \ref{P:6.4} as a corollary to Theorem \ref{T:invtriple}.
     }\end{remark}
 
 In case  $\dim \cD(T)^{\perp} = \dim \cR(T)^{\perp} = 0$, 
 then the third object in the admissible triple $X_{T}$ is trivial
 and can be ignored.  To analyze this situation, let us introduce another 
 subspace associated with a \textbf{c.c.}~commutative row 
 contraction $T$, namely
 \begin{align}
     {\mathcal M}^{(2)}_T= &\left\{x\in\cX \colon 
     D_{T^*}(I_{\cX}-Z_{\cX}(z)T^*)^{-1}x\equiv 0 \text{ and } \right. \notag\\
     &\quad \left. 
     D_{T}(I_{\cX^{d}}-Z_{\cX}(z)^*T)^{-1}Z_{\cX}(z)^*x\equiv 0\right\}.\label{6.10''}
\end{align}
It is readily seen that ${\mathcal M}^{(1)}_T\subset {\mathcal 
M}^{(2)}_T$.
Indeed, if $D_{T}(I-Z(z)^*T)^{-1}{\mathcal I}_{i}^*x\equiv 0$ for 
$i=1,\ldots,d$, then also
\begin{align*}
0\equiv \sum_{i=1}^d z_iD_{T}(I_{\cX^{d}}-Z_{\cX}(z)^*T)^{-1}{\mathcal I}_{i}^*x
&=D_{T}(I_{\cX^{d}}-Z_{\cX}(z)^*T)^{-1}\sum_{i=1}^d z_i{\mathcal I}_{i}^*x\\
&= D_{T}(I_{\cX^{d}}-Z_{\cX}(z)^{*}T)^{-1} Z_{\cX}(z)^{*}x.
\end{align*}
In case $d=1$, we have ${\mathcal M}^{(1)}_{T} = 
{\mathcal M}^{(2)}_{T}$ and either
 space is the maximal
reducing space for $T$ on which $T$ is unitary.   Hence 
$\cM_{T}^{(1)} = \{0\}$ and $\cM_{T}^{(2)} = \{0\}$ are both 
equivalent to $T$ being \textbf{c.n.u.} for the single-variable $d=1$ 
case.  For the multivariable setting, as we have already taken 
$\cM_{T}^{(1)} = 0$ as the definition of $T$ being \textbf{c.c.},
we make the following definition.

\begin{definition}
Given a commutative row contraction $T$ we say that $T$ is {\em
strongly closely connected} (\textbf{strongly c.c.})~if 
${\mathcal M}^{(2)}_T=\{0\}$ with $\cM_{T}^{(2)}$ as in 
\eqref{6.10''}.  Equivalently,
\begin{equation}  \label{span-strongcc}
    \cX = \bigvee \left\{ \operatorname{Ran} {\mathbf T}^{n} 
    D_{T^{*}}, \, \operatorname{Ran} \left( \sum_{k=1}^{d} 
    {\mathcal I}_{k}^{*} X_{n-e_{k}} D_{T} \right) 
     \colon n \in {\mathbb Z}^{d}_{+}, 
     \right\}.
    \end{equation}
where $X_n$ is given in \eqref{Xn} and where $e_k$ stands for the element in ${\mathbb
Z}_+^d$ with one in the $k$-th slot and zeros in all other slots.
\label{D:6.5''}
\end{definition}

From the chain of containments \eqref{chain1} combined with the 
observation made above that $\cM_{T}^{(1)} \subset \cM_{T}^{(2)}$, we 
get
\begin{align}
&  \bigvee \left\{ \operatorname{Ran}{\mathbf T}^{n} D_{T^{*}} \colon n 
\in {\mathbb Z}^{d}_{+} \right\} \notag \\
& \quad \subset (\cM_{T}^{(2)})^{\perp} = \left\{ \operatorname{Ran}{\mathbf T}^{n} D_{T^{*}}, 
\operatorname{Ran} \left(  \sum_{k=1}^{d} {\mathcal I}_{k}^{*} 
X_{n-e_{k}} D_{T} \right), n \in {\mathbb Z}^{d}_{+} \right\} \notag \\
& \quad \subset (\cM^{(1)})^{\perp} = \bigvee \left\{ \operatorname{Ran}{\mathbf T}^{n} D_{T^{*}}, 
\operatorname{Ran}{\mathcal I}_{k}^{*} X_{n}D_{T} \colon n \in {\mathbb 
Z}^{d}_{+}, k=1, \dots, d \right\}  
\notag \\
& \quad \subset \bigvee \left\{ \operatorname{Ran}{\mathbf T}^{*n} D_{T^{*}}, 
\operatorname{Ran} {\mathbf T}^{n} {\mathbf T}^{*m} {\mathcal 
I}_{k}^{*} D_{T}\colon n, 
m \in {\mathbb Z}^{d}_{+},\, k=1, \dots, d  \right\} = \cX,
 \label{chain2}
\end{align}
from which we see that \textbf{c.n.c.}~$\Rightarrow$ \textbf{ 
strongly c.c.}~$\Rightarrow$ 
\textbf{c.c.}~$\Rightarrow$\textbf{c.n.u.} for a commutative row 
contraction ${\mathbf T}$ (where the last property 
\textbf{c.n.u.}~holds for {\em any} commutative row contraction.

\smallskip

One can check that $\cM_{T}^{(2)} = \{0\}$ amounts to the condition 
that $\cR(T)^{\perp} = \{0\}$ (here ${\mathcal R}(T)$ is as in 
\eqref{bU2}). Thus the characteristic triple 
$(\theta_{T}, {\mathbb K}_{T}, X_{T})$ for a \textbf{strongly 
c.c.}~commutative row contraction collapses to 
$(\theta_{T}, {\mathbb K}_{T}, 0\}$.  Given two \textbf{strongly 
c.c.}~commutative row contractions $T$ and $R$, equivalence 
of the characteristic triples $(\theta_{T}, {\mathbb K}_{T}, 0)$, 
$(\theta_{R}, {\mathbb K}_{R}, 0)$ collapses to joint coincidence of 
the characteristic function/Agler decomposition pairs $(\theta_{T}, 
{\mathbb K}_{T})$, $(\theta_{R}, {\mathbb K}_{R})$.  Thus the 
following result is an immediate special case of Theorem 
\ref{T:invtriple}. 

\begin{theorem}  \label{T:invpair}
Let $T$ and $R$ be two \textbf{strongly c.c.}~commutative row  contractions 
and let us assume that the associated 
characteristic function/Agler decomposition pairs $(\theta_T, 
{\mathbb K}_{T})$ and 
$(\theta_R, {\mathbb K}_{R})$ jointly coincide (i.e., 
let us assume that \eqref{6.12} holds).
Then $T$ and $R$ are  unitarily equivalent.
\end{theorem}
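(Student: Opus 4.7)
The plan is to derive the result directly from Theorem~\ref{T:invtriple} by showing that, in the strongly closely connected setting, the third object of the characteristic triple is forced to be trivial, so that equivalence of triples collapses to joint coincidence of the pair $(\theta_{T}, {\mathbb K}_{T})$. First I would observe that strongly \textbf{c.c.} implies \textbf{c.c.}: indeed ${\mathcal M}^{(1)}_T \subset {\mathcal M}^{(2)}_T$, so ${\mathcal M}^{(2)}_T = \{0\}$ forces ${\mathcal M}^{(1)}_T = \{0\}$. Hence the Halmos-dilation colligation $\bU_T$ of \eqref{6.3} is closely connected, and Proposition~\ref{P:prel} applies, yielding a unitary $X_T \colon {\mathcal D}(T)^\perp \to {\mathcal R}(T)^\perp$ and the associated \textbf{t.c.f.m.} colligation $\bU(T)$ as in \eqref{bU1}--\eqref{bU2} with $T$ unitarily equivalent to $A(T)^*$.

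The central step is to identify ${\mathcal R}(T)^\perp$ explicitly. Using the unitary map $U \colon {\mathcal X} \to \cH({\mathbb K}_T)$ of Step~2 of the proof of Theorem~\ref{T:4.7} (which is unitary precisely because ${\mathcal M}^{(1)}_T = \{0\}$), a generic $f \in \cH({\mathbb K}_T)$ has the form $f = {\mathbb G}_T(\cdot) x$. Reading off $f_+$ and $f_{-,i}$ from \eqref{6.11}, the description \eqref{4.16} of ${\mathcal R}^\perp$ says that $f \in {\mathcal R}(T)^\perp$ if and only if
\[
D_{T^*}(I_{\cX}-Z_{\cX}(z)T^*)^{-1}x \equiv 0 \quad\text{and}\quad D_{T}(I_{\cX^{d}}-Z_{\cX}(z)^*T)^{-1} Z_{\cX}(z)^*x \equiv 0,
\]
which is exactly the condition $x \in {\mathcal M}^{(2)}_T$. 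Therefore $U$ restricts to a unitary identification ${\mathcal R}(T)^\perp \cong {\mathcal M}^{(2)}_T$, and the hypothesis ${\mathcal M}^{(2)}_T = \{0\}$ forces ${\mathcal R}(T)^\perp = \{0\}$. Since $\bU_T$ is unitary and $\bU(T)$ is unitarily equivalent to $\bU_T$ (by Theorem~\ref{T:4.7} applied via Proposition~\ref{P:prel}), the block form \eqref{bU2} of $\bU(T)^*$ gives that $X_T$ is unitary; combined with ${\mathcal R}(T)^\perp = \{0\}$ this forces ${\mathcal D}(T)^\perp = \{0\}$ as well, so $X_T$ is the unique zero operator between trivial spaces.

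The same reasoning applied to $R$ yields ${\mathcal D}(R)^\perp = {\mathcal R}(R)^\perp = \{0\}$ and $X_R = 0$. Hence equivalence of the admissible triples $(\theta_T, {\mathbb K}_T, X_T)$ and $(\theta_R, {\mathbb K}_R, X_R)$ reduces to: (i) joint coincidence of the pairs $(\theta_T, {\mathbb K}_T)$ and $(\theta_R, {\mathbb K}_R)$, which is exactly the hypothesis \eqref{6.12}; and (ii) unitary equivalence of $X_T$ and $X_R$, which is vacuous as both are zero maps between zero spaces. Theorem~\ref{T:invtriple} then gives that $T$ and $R$ are unitarily equivalent. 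The only nontrivial technical point is the identification ${\mathcal R}(T)^\perp \cong {\mathcal M}^{(2)}_T$, and everything else is routine bookkeeping built on previously established results.
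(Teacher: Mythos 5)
Your proposal is correct and follows essentially the same route as the paper: show that for a strongly \textbf{c.c.}~commutative row contraction the third component of the characteristic triple is trivial, so that equivalence of triples reduces to joint coincidence of the pair, then invoke Theorem~\ref{T:invtriple}. The one place where you go beyond the paper is that you actually verify the identification $\cR(T)^\perp \cong \cM_T^{(2)}$ (via reading off $f_+$ and $f_{-,i}$ from \eqref{6.11} against \eqref{4.16}), which the paper disposes of with ``One can check that''; your verification is correct and the rest is the same bookkeeping.
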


\begin{remark}  {\em In Theorem \ref{T:invpair} it is enough to 
    assume that $T$ is \textbf{strongly c.c.}~ with $R$ only 
    \textbf{c.c.}~or vice versa.
} \end{remark}

We have seen that the model-theory results are the best for the case
where the commutative row contraction is
\textbf{c.n.c.}  It essentially follows from the definitions that
 any commutative row-contractive $d$-tuple ${\mathbf T} = (T_{1}, \dots,
T_{d})$ can be decomposed as
$$
{\mathbf T} = \begin{bmatrix} {\mathbf T}_{cnc} & {\boldsymbol \Gamma} \\ 0 & {\mathbf
T}_{c} \end{bmatrix} =
\left( \begin{bmatrix} T_{cnc,1} & \Gamma_{1}  \\ 0 & T_{c}
\end{bmatrix}, \dots, \begin{bmatrix} T_{cnc,d} & \Gamma_{d} \\ 0 &
T_{c,d} \end{bmatrix} \right)
$$
where ${\mathbf T}_{cnu}$ is \textbf{c.n.c.} while ${\mathbf T}_{c}$
is \textbf{coisometric}, i.e., the operator block-row matrix
$T_c=\begin{bmatrix} T_{c,1} & \cdots & T_{c,d} \end{bmatrix}$ is
coisometric as an operator from $\cX_{c}^{d}$ to $\cX_{d}$:
$$
 T_{c,1} T_{c,1}^{*} + \cdots + T_{c,d} T_{c,d}^{*} = I_{\cX_{c}}.
$$
It is known (see \cite{Ath2}) that any column isometry such as
$T_{c}^{*}$ (sometimes also called a {\em spherical isometry})
is jointly subnormal and extends to a spherical unitary, i.e., a
commutative $d$-tuple ${\mathbf N} = (N_{1}, \dots, N_{d})$ with
joint spectral measure supported on the unit sphere 
$\partial {\mathbb B}^{d}$. Thus there is rather complete
unitary-equivalence classification theory (in terms of the
absolutely-continuous equivalence class of a spectral measure
supported on $\partial {\mathbb B}^{d}$ together with specification
of a multiplicity function) for spherical-unitary $d$-tuples.
Nevertheless it makes sense to apply our \textbf{t.c.f.m.}-model
theory to spherical-unitary tuples.   In this case the
characteristic function $\theta_{T}$ has values in $\cL(\cX^{d},
\{0\})$ and is thus trivial.  Thus this case separates out the extra
invariant (i.e., the Agler-decomposition kernel ${\mathbb K}_{T}$) as
the only object of interest.  Since the space $\cY$ is trivial in
this case, the two-component Agler-decomposition
then collapses to the single-component Agler decomposition occurring for the weakly 
isometric case as sketched in Section \ref{T}.  To make all objects explicitly
computable, in the following example, we specialize even further to
the simplest case where $\bT$ is just a pair of complex numbers
$(\lam_{1}, \lam_{2})$ on the boundary of the unit ball in ${\mathbb
C}^{2}$.

\begin{example}  \label{E:spherical} {\em
Let 
\begin{equation}
T=\begin{bmatrix}\lam_{1} & \lam_{2}\end{bmatrix}, \quad\mbox{where}\quad
\lam_1,\lam_2\in\C\quad\mbox{and}\quad |\lam_{1}|^{2} +
|\lam_{2}|^{2} = 1.
\label{t-lam}
\end{equation}
Thus, $\lambda=(\lam_{1}, \lam_{2})$ is fixed point on the boundary
of the unit ball ${\mathbb B}^{2}$ in ${\mathbb C}^{2}$ and we  
view $T$ as a commutative row-contraction on the Hilbert space $\cX = {\mathbb C}$,
to which our model theory applies.  Our goal is to compute explicitly the model
characteristic-function/Agler-decomposition pair $(\theta_{T},{\mathbb K}_{T})$ for this case.

\smallskip

Since $TT^*=1$, it follows that $D_{T^{*}} = 0$ (as an operator on ${\mathbb
C}$) and that $D_{T}$ is the orthogonal projection onto the orthogonal complement of 
${\rm Ran}\, T^*$. Thus, ${\mathcal D}_{T}$ is spanned by the vector 
$\left[\begin{smallmatrix}
-\lam_{2} \\ \lam_{1} \end{smallmatrix}\right]$, and if we write the
characteristic colligation ${\mathbf U}_{T}=\left[ \begin{smallmatrix} T^{*} & D_{T}
\\ D_{T^{*}} & -T \end{smallmatrix} \right] \colon \left[
\begin{smallmatrix} {\mathbb C} \\ {\mathcal D}_{T} \end{smallmatrix} \right]
    \to \left[ \begin{smallmatrix} {\mathbb C}^{2} \\ \{0\} \end{smallmatrix}
    \right]$ as a matrix with respect to the choice of basis
$\left[\begin{smallmatrix}-\lam_{2} \\ \lam_{1} \end{smallmatrix}\right]$
 for ${\mathcal D}_{T}$, we arrive at
$$
{\mathbf U}_{T} = \begin{bmatrix} A & B \\ C & D \end{bmatrix} =
\left[ \begin{array}{c|c} \overline{\lam}_{1} & -\lam_{2}\\ 
\overline{\lam}_{2} & \lam_{1} \\ \hline  \\
0 & 0  \end{array}  \right] \colon \begin{bmatrix} {\mathbb
C} \\  {\mathbb C}  \end{bmatrix} \to \begin{bmatrix} {\mathbb C}^{2}
\\ \{0\} \end{bmatrix}.
$$
 Then the characteristic function $\theta_{T}(z)
= D + C(I - Z_{\rm row}(z) A)^{-1} Z_{\rm row}(z) B$
is trivial since it lands in the zero-dimensional space.  The
Agler-decomposition kernel ${\mathbb K}_{T}(z, \zeta)$ given by
\eqref{6.11} collapses to the lower diagonal block $\Phi(z, \zeta) =
\left[ \Phi_{ij}(z, \zeta) \right]$ ($i,j=1,2$), again since
$\cY = {\mathcal D}_{T^{*}} = \{0\}$.  We then compute
\begin{align*}
B^{*} (I - Z_{\rm row}(z)^{*} A^{*})^{-1} & =
\begin{bmatrix} -\overline{\lam}_{2} & \overline{\lam}_{1}
    \end{bmatrix}\begin{bmatrix} 1 - \overline{z}_{1} \lam_{1} & - \overline{z}_{1}
     \lam_{2} \\ - \overline{z}_{2} \lam_{1} & 1 - \overline{z}_{2}
     \lam_{2} \end{bmatrix} ^{-1}\\
& =\frac{1}{d(z, \lam)}
    \begin{bmatrix} -\overline{\lam}_{2} & \overline{\lam}_{1}
    \end{bmatrix}\begin{bmatrix} 1 -
    \overline{z}_{2} \lam_{2} & \overline{z}_{1} \lam_{2} \\
    \overline{z}_{2} \lam_{1} & 1 - \overline{z}_{1} \lam_{1}
\end{bmatrix} \\
& = \frac{1}{d(z, \lam)} \begin{bmatrix} \overline{z}_{2} -
\overline{\lam}_{2} & -(\overline{z}_{1} - \overline{\lam}_{1}),
\end{bmatrix}
\end{align*}
where we made use of the assumed identity $|\lam_{1}|^{2} +
|\lam_{2}|^{2} = 1$ and where we have set
$$
d(z, \lam) = \det (I - Z_{\rm row}(z)^*A^*)=1-A^*Z_{\rm row}(z)^* =
1 -\overline{z}_{1} \lam_{1} - \overline{z}_{2} \lam_{2}.
$$
Thus ${\mathbb G}_{\lam}(z)$ in \eqref{6.11} (with $\lam$ in place of $T$ and with respect 
to our choice of basis for ${\mathcal D}_T$) becomes
$$
{\mathbb G}_{T}(z) = \frac{1}{d(z, \lam)} \begin{bmatrix}
\overline{z}_{2} - \overline{\lam}_{2} \\ - (\overline{z}_{1} -
\overline{\lam}_{1}) \end{bmatrix}
$$
and hence
\begin{align}
    {\mathbb K}_{\lam}(z, \zeta) & = {\mathbb G}_{\lam}(z)
    {\mathbb G}_{\lam}(\zeta)^{*}  \notag \\
    & = \frac{1}{d(z, \lam) \overline{d(\zeta, \lam)}}
    \begin{bmatrix} \overline{z}_{2} - \overline{\lam}_{2} \\
        -(\overline{z}_{1} - \overline{\lam}_{1}) \end{bmatrix}
        \begin{bmatrix} \zeta_{2} - \lam_{2} & -(\zeta_{1} -
            \lam_{1}) \end{bmatrix}.  \label{Klam}
\end{align}
The expected Agler decomposition
$$
 I - \theta_{T}(z)^*\theta_{T}(\zeta)= \sum_{k=1}^{2} \Phi_{kk}(z, \zeta) -
 \sum_{i,j=1}^{2} \overline{z}_{i} \zeta_{j} \Phi_{ij}(z, \zeta)
$$
can be expressed as
\begin{align*}
& \left(1 - \overline{z}_{1} \lam_{1} - \overline{z}_{2}
\lam_{2}\right)
  \left( 1 - \zeta_{1} \overline{\lam}_{1} - \zeta_{2}
 \overline{\lam}_{2} \right)  =   \\
& \qquad  (\overline{z}_{2} - \overline{\lam}_{2}) (\zeta_{2} - \lam_{2})
  + (\overline{z}_{1} - \overline{\lam}_{1}) ( \zeta_{1} - \lam_{1}) \\
 &  \qquad
 - \overline{z}_{1} \zeta_{1}(\overline{z}_{2} - \overline{\lam}_{2})
      (\zeta_{2} - \lam_{2}) + \overline{z}_{1} \zeta_{2}(\overline{z}_{2}
       - \overline{\lam}_{2}) ( \zeta_{1} - \lam_{1}) \\
 & \qquad
 + \overline{z}_{2} \zeta_{1} (\overline{z}_{1} -
 \overline{\lam}_{1}) (\zeta_{2} - \lam_{2})
 - \overline{z}_{2} \zeta_{2} (\overline{z}_{1} - \overline{\lam}_{1})
 (\zeta_{1} - \lam_{1}).
\end{align*}
This identity in turn can be checked directly by a routine but
tedious calculation (or as an exercise for a software package such as
{\tt MATHEMATICA}). For this simple example it is easily checked that we are in 
the \textbf{strongly c.c.}~case.  Thus by Theorem \ref{T:invpair}, the
characteristic function/Agler decomposition pair $(0, {\mathbb
K}_{\lam})$ is a complete unitary invariant for $\lam=\begin{bmatrix}\lam_1 & 
\lam_2\end{bmatrix}$ within the class of commutative \textbf{strongly c.c.}~row-
contractive operator 2-tuples.  Since the matrix entries of ${\mathbb K}_{\lam}$ are
scalar, it is clear that two such kernels ${\mathbb K}_{\lam}$ and
${\mathbb K}_{\lam'}$ coincide if and only if they are identical.  It
is also elementary that two such $\lam$'s are unitarily equivalent as
operator tuples if and only if they are identical.  We conclude
as a consequence of Theorem \ref{T:invpair} that,
given two points $\lam$ and $\lam'$ on $\partial {\mathbb B}^{2}$,
then
${\mathbb K}_{\lam} = {\mathbb K}_{\lam'}$ if and only if $\lam =
\lam'$---a point which of course can also be verified directly from
the formula \eqref{Klam}.  In summary, for this case we have used a more
complicated object ${\mathbb K}_{\lam}$ to classify a much simpler
object $\lam = (\lam_{1}, \lam_{2})$;  presumably there are other
examples $\bT = (T_{1}, \dots, T_{d})$ where the characteristic pair
$(\theta_{T}, {\mathbb K}_{T})$ is a simpler object than $T$
and for which the \textbf{t.c.f.m.} associated with $(\theta_{T},
{\mathbb K}_{T})$ sheds some light on the structure of $T$.

\smallskip

Note that in this example we have arrived at a whole family ${\mathbb
K}_{\lam}(z, \zeta)$ of essentially different Agler decompositions for the fixed Schur-class 
function $S(z) =  0 \colon {\mathbb C} \to \{0\}$, indexed by a point $\lam
\in \partial {\mathbb B}^{2}$.  In general, identification of a
family of row-contractive operator tuples $\bT_{\lam}$ all having the
same characteristic function $\theta_{T_{\lam}} = S$ leads to the
construction of a whole family $\{{\mathbb K}_{\bT_{\lam}}\}$ of
Agler decompositions for the fixed Schur-class function $S$.  This
illustrates the non-uniqueness of Agler decompositions for a given
$S$ and may lead to other examples where a whole family of distinct
Agler decompositions can be exhibited explicitly.}
\end{example}

\section{Noncommutative Agler decompositions}  \label{S:NAD}
\setcounter{equation}{0}
     
     It is possible also to study noncommutative Agler decompositions 
     for the noncommutative characteristic function $\theta_{T, 
     nc}(\bz)$ of a (possibly noncommutative) row contraction $T$ 
     as follows.  We first need to review some basic facts concerning 
     noncommutative kernels; a systematic treatment can be found in 
     \cite{BV-formal}.   

\smallskip
     
     A noncommutative kernel (with operator coefficients) is a formal 
     power series in two sets of noncommuting indeterminates $\bz = 
     (\bz_{1}, \dots, \bz_{d})$ and $\bzeta = (\bzeta_{1}, \dots, 
     \bzeta_{d})$ of the form
     $$
     {\mathbb K}(\bz, \bzeta) = \sum_{\alpha, \beta \in \cF_{d}} 
     {\mathbb K}_{\alpha, \beta} \bz^{\alpha} \bzeta^{\beta}.
     $$
     While we assume that the $\bz_{1}, \dots, \bz_{d}$ do not commute 
     with each other and similarly for $\bzeta_{1}, \dots, 
     \bzeta_{d}$, it is convenient to assume that the $\bz$'s commute 
     with the $\bzeta$'s:  $\bz_{i} \bzeta_{j} = \bzeta_{j} \bz_{i}$ 
     for all $i,j = 1, \dots, d$.  Let us say that the noncommutative 
     kernel ${\mathbb K}(\bz, \bzeta)$ is {\em positive} if it has a 
     Kolmogorov decomposition of the form
     $$
     {\mathbb K}(\bz, \bzeta)  = H(\bz) H(\bzeta)^{*}.
     $$
     Here $H(\bzeta)^{*} = \sum_{v \in \cF_{d}} H_{v}^{*} 
     \bzeta^{v^{\top}}$ if $H(\bz) = \sum_{v \in \cF_{d}} H_{v} 
     \bz^{v}$.  Note that here we follow the conventions of 
     \cite{BV-formal, Cuntz-scat} and avoid introduction of formal 
     conjugate variables $\overline{\bzeta}_{1}, \dots 
     \overline{\bzeta}_{d}$:  we define $(\bzeta^{v})^{*} = 
     \bzeta^{v^{\top}}$ where $v^{\top} = i_{1} \dots i_{N}$ is the 
     transpose of $v = i_{N} \dots i_{1} \in \cF_{d}$. If the formal 
     power series in noncommuting indeterminates $\bz = (\bz_{1}, 
     \dots, \bz_{d})$ has a realization of the form
     $$
     S(\bz) = D + C (I - Z_{\cX}(\bz) A)^{-1}  Z_{\cX}(\bz) B,
     $$
     with colligation matrix  ${\mathbf U} = \left[ \begin{smallmatrix} A & B \\ C & D 
 \end{smallmatrix} \right] = \left[ \begin{smallmatrix} A_{1} & 
 B_{1}  \\ \vdots & \vdots \\ A_{d} & B_{d} \\ C & D 
\end{smallmatrix} \right] \colon  \left[ \begin{smallmatrix} \cX  \\ 
\cU \end{smallmatrix} \right]  \to  \left[ \begin{smallmatrix} 
\cX^{d} \\ \cY \end{smallmatrix} \right]$,  then the noncommutative 
kernel
\begin{align}
   &  {\mathbb K}(\bz, \bzeta)  = \label{NCbigkerreal}\\
    & \quad  \begin{bmatrix} C (I_{\cX} - 
    Z_{\cX}(\bz) A)^{-1} \\ B^{*} (I_{\cX^{d}} - Z_{\cX}(\bz)^{*} 
    A^{*})^{-1} \end{bmatrix}
    \begin{bmatrix} (I_{\cX} - A^{*} Z_{\cX}(\bzeta)^{*})^{-1} C^{*} &
	(I_{\cX^{d}} - A Z_{\cX}(\bzeta) )^{-1} B \end{bmatrix}\notag
\end{align}
is positive and has a decomposition of the form 
\begin{equation}   \label{NCbigker}
    {\mathbb K}(\bz, \bzeta) = \begin{bmatrix} K_{S}(\bz, \bzeta) & 
    \Psi_{1}(\bz, \bzeta) & \cdots & \Psi_{d}(\bz, \bzeta)  \\
    \Psi_{1}(\bzeta, \bz)^{*} & \Phi_{11}(\bz, \bzeta) & \cdots & \
    \Phi_{1d}(\bz, \bzeta)  \\
     \vdots & \vdots & & \vdots \\
    \Psi_{d}(\bzeta, \bz)^{*} & \Phi_{d1}(\bz, \bzeta) & \cdots & 
    \Phi_{dd}(\bz, \bzeta) \end{bmatrix}
    \end{equation}
    with 
    \begin{align}
	K_{S}(\bz, \bzeta)& = C (I - Z_{\cX}(\bz)A)^{-1} (I - A^{*} 
	Z_{\cX}(\bzeta)^{*})^{-1} C^{*},  \notag \\
	\Psi_{k}(\bz, \bzeta) & = C (I - Z_{\cX}(\bz)A)^{-1} 
	{\mathcal I}_{k}^{*} (I - A Z_{\cX}(\bzeta))^{-1} B, \notag  \\
	\Phi_{ij}(\bz, \bzeta) & = B^{*} (I - Z_{\cX}(\bz)^{*} 
	A^{*})^{-1} {\mathcal I}_{i} {\mathcal I}_{j}^{*}
	(I - A Z(\bzeta))^{-1} B.
	\label{NCentries}
\end{align}
Furthermore, by making use of the assumed unitary property of the 
colligation matrix ${\mathbf U}$, one can check that the block matrix 
entries of ${\mathbb K}$ in \eqref{NCbigker} satisfy the additional 
identities
\begin{align}
    K_{S}(\bz, \bzeta)&  = k_{Sz}(\bz, \bzeta) I_{\cY} - S(\bz) \left( 
    k_{Sz}(\bz, \bzeta) I_{\cU} \right) S(\bzeta)^{*}, \notag \\
    S(\bz) - S(\bzeta) & = \sum_{k=1}^{d} \left[ \Psi_{k}(\bz, \bzeta) 
    \bz_{k} - \bzeta_{k} \Psi_{k}(\bz, \bzeta) \right], \notag \\
    I - S(\bz)^{*} S(\bzeta) & = \sum_{k=1}^{d} \Phi_{kk}(\bz, 
    \bzeta)  -  \sum_{i,j=1}^{d} \bzeta_{j} \Phi_{ij}(\bz, \bzeta) 
    \bz_{i}.  \label{NCAglerdecom}
\end{align}
Here we use the {\em noncommutative formal Szeg\H{o} kernel} 
$k_{Sz}(\bz, \bzeta)$ given by
$$
  k_{Sz}(\bz, \bzeta) = \sum_{\alpha \in \cF_{d}} \bz^{\alpha} 
  \bzeta^{\alpha^{\top}}.
$$

Conversely, given a formal power series $S(\bz) = \sum_{\alpha \in 
\cF_{d}} S_{\alpha} \bz^{\alpha}$ with coefficients $S_{\alpha} \in 
\cL(\cU, \cY)$, we say that a kernel ${\mathbb K}$ of the form
\eqref{NCbigker} is an {\em Agler decomposition for} $S$ if the 
relations \eqref{NCAglerdecom} all hold true.
In case the colligation matrix ${\mathbf U}$ has the Halmos-dilation 
form ${\mathbf U} =  \left[ \begin{smallmatrix} T^{*} & D_{T} \\ 
D_{T^{*}} & - T \end{smallmatrix} \right] \colon \left[ 
\begin{smallmatrix} \cX \\ {\mathcal D}_{T} \end{smallmatrix} \right] 
    \to \left[ \begin{smallmatrix} \cX^{d} \\ {\mathcal D}_{T^{*}} 
\end{smallmatrix} \right]$ for a row-contractive operator 
$T =\begin{bmatrix}T_{1}, \dots, T_{d}\end{bmatrix}$ from $\cX^{d}$ to $\cX$, then the pair $(S = 
\theta_{T, nc}, {\mathbb K}_{T. nc})$ (where ${\mathbb K}_{T, nc}$ 
is  as in \eqref{NCbigkerreal}) is a unitary invariant for $T$.  
Several questions arise:  (1) to what 
extent is $(\theta_{T, nc}, {\mathbb K}_{T, nc})$ a complete unitary 
invariant for $T$, and (2) to what extent can we use a 
given noncommutative Schur-class multiplier/noncommutative 
Agler-decomposition pair $(S, {\mathbb K})$ to construct a 
row contraction $T$ such that $(S, {\mathbb K}) 
= (\theta_{T, nc}, {\mathbb K}_{T, nc})$?

\smallskip

The first question is more elementary than the second and can be 
resolved as follows.  From the formula \eqref{NCbigkerreal} we see 
that ${\mathbb K}_{T, nc}(\bz, \bzeta)$ is given by
\begin{align*}
    & {\mathbb K}_{T, nc}(\bz,\bzeta) = \\
    & \quad 
    \begin{bmatrix} D_{T^{*}} (I_{\cX} - Z_{\cX}(\bz) T^{*})^{-1} \\
	D_{T} (I_{\cX^{d}} - Z_{\cX}(\bz) T)^{-1} \end{bmatrix}
\begin{bmatrix} I_{\cX} - T Z_{\cX}(\bzeta)^{*})^{-1} D_{T^{*}} &
    (I _{\cX^{d}} - T^{*} Z_{\cX}(\bzeta))^{-1} D_{T} \end{bmatrix}.
    \end{align*}
 From the formula \eqref{NCbigker} and the noncommutative Agler 
 decomposition formulas \eqref{NCAglerdecom}, we see that the upper 
 diagonal bock and the off-diagonal blocks are already uniquely 
 determined by $S(\bz) = \theta_{T, nc}(\bz)$.  The lower diagonal 
 block has the form  $\Phi(\bz, \bzeta) = \left[ \Phi_{ij}(\bz, 
 \bzeta) \right]_{i,j=1}^{d}$ where
 $$
 \Phi_{ij}(\bz, \bzeta) = D_{T} (I_{\cX^{d}} - Z_{\cX}(\bz)^{*} 
 T)^{-1} {\mathcal I}_{i} {\mathcal I}_{j}^{*} (I_{\cX^{d}} - T^{*} 
 Z_{\cX}(\bzeta))^{-1} D_{T}.
 $$
 It then follows that
 \begin{align*}
     & \sum_{i,j=1}^{d} \bzeta_{j} \Phi_{ij}(\bz, \bzeta) \bz_{i} \\
     & \quad = D_{T }(I_{\cX^{d}} - Z_{\cX}(\bz)^{*} T)^{-1} 
     Z_{\cX}(\bz)^{*} Z_{\cX}(\bzeta) (I_{\cX^{d}} - 
     Z_{\cX}(\bzeta))^{-1} D_{T}  \\
     & \quad 
     = D_{T} Z_{\cX}(\bz)^{*} \left( I_{\cX} - T Z_{\cX}(\bz)^{*}\right)^{-1} 
     \left( I_{\cX} - Z_{\cX}(\bzeta) T^{*} \right)^{-1} 
     Z_{\cX}(\bzeta) D_{T}  \\
     & \quad = 
     \sum_{i,j=1}^{d} \bz_{i} D_{T} {\mathcal I}_{i} \left(I_{\cX} - T 
     Z_{\cX}(\bz) \right)^{*} \left( I_{\cX} - Z_{\cX}(\bzeta) T^{*} 
     \right)^{-1} {\mathcal I}_{j}^{*} D_{T} \bzeta_{j}.
 \end{align*}
 In the present noncommutative setting,  any collection of nonzero formal 
 power series of the form
 $$
 \{ \bz_{i}G_{ij}(\bz, \bzeta) \bzeta_{j} \colon i,j=1, \dots, d\}
 $$
 is linearly independent and it follows that 
 knowledge of $\Phi_{ij}(\bz, \bzeta)$ uniquely determines the 
 modified kernels
 $$
 \widetilde \Phi_{ij}(\bz, \bzeta): = D_{T} {\mathcal I}_{i} 
 \left(I_{\cX} - T Z_{\cX}(\bz)^{*} \right)^{-1}
 \left( I_{\cX} - Z_{\cX}(\bzeta) T^{*} \right)^{-1} {\mathcal 
 I}_{j}^{*} D_{T}, \quad i,j = 1, \dots, d
 $$
 as well.  Using the formal power series expansion
 $$
 \left(I_{\cX} - T Z_{\cX}(\bz)^{*}\right)^{-1} = \sum_{\alpha \in 
 \cF_{d}}  \bT^{\alpha} \bz^{\alpha},
 $$
 by looking at the coefficient of $\bz^{\alpha} \bzeta^{\beta}$ in 
 the expansion for $\widetilde \Phi_{ij}(\bz, \bzeta)$ we see that 
 the $\widetilde \Phi_{ij}$'s determine uniquely the moments
 $$
 D_{T} {\mathcal I}_{i} \bT^{\alpha} \bT^{* \beta} {\mathcal 
 I}_{j}^{*} D_{T}, \quad \alpha, \beta \in \cF_{d} \; \text{ and } \; 
i,j =  1, \dots, d.
 $$
 Combining these moments with the moments 
 $$
  -T,\, D_{T^{*}} \bT^{*\alpha} {\mathcal I}_{j}^{*} D_{T} \colon 
  {\mathcal D}_{T} \to {\mathcal D}_{T^{*}}, \quad \alpha \in \cF_{d} 
  \; \text{ and } \; j = 1, \dots, d
  $$
  determined by the characteristic function $\theta_{T, nc}$ gives 
  us the list \eqref{exp-moments}.  By the result from 
  \cite{Cuntz-scat} we conclude that $(\theta_{T, nc}, {\mathbb 
  K}_{T, nc})$ is a complete unitary invariant for the general 
  \textbf{c.n.u.}~row contraction $T$ (in  particular, for commutative such $T$).  
	
 We conclude that the two-component Agler-decomposition approach to operator-model theory 
 (i.e., using the two-component Agler decomposition in addition to the 
 characteristic function as a unitary invariant) has mixed results.  
 In the commutative case, some additional information is added and 
 the characteristic-function/Ag\-ler-decomposition pair is definitive 
 in some special cases which go beyond the \textbf{c.n.c.}~case for 
 which the characteristic function $\theta_{T}$ alone is 
 definitive.  On the other hand, for the noncommutative setting, the 
 results from \cite{Cuntz-scat} can be reinterpreted to say that the 
 characteristic-function/Ag\-ler-decomposition pair is a complete 
 unitary invariant for the general \textbf{c.n.u.}~row contraction 
 $T$.  The following table summarizes our results on complete unitary invariants 
   for various classes of Hilbert-space row-contractive operator 
   $d$-tuples (note that the class on each line is a subclass of 
   the class on the next line):
$$
\left[ \begin{array}{c|c} \hline \\
    \textbf{operator $d$-tuple class} & \textbf{complete unitary 
    invariant}  \\
    \hline
   {\mathbf T} = \text{ commutative \textbf{c.n.c.}} & 
   \theta_{T}  \\
   {\mathbf T} = \text{ commutative \textbf{strongly c.c.}} & 
   (\theta_{T}, {\mathbb K}_{T}) \\
   {\mathbf T} = \text{ commutative \textbf{c.c.}} & 
   (\theta_T, {\mathbb K}_{T}, X_{T})    \\
   {\mathbf T} = \textbf{c.n.u.} & (\theta_{T, nc}, 
   {\mathbb K}_{T, nc}) \\
   \hline
   \end{array}  \right]
 $$

 As for the second question posed above (construction of a canonical 
 model for a given noncommutative Schur-class 
 function/Agler decomposition pair $(S, {\mathbb K}_{S})$), we can say the following.  
 By using the analysis in 
 \cite{Cuntz-scat}, from such a pair $(S, {\mathbb K}_{S})$ one can construct 
 a characteristic pair $(S, L)$ in the 
 sense of \cite{Cuntz-scat} from which one can construct a 
 noncommutative Sz.-Nagy-Foias functional-model space on which there 
 is a canonical choice of \textbf{c.n.u.}~row-contractive operator 
 $d$-tuple $\bT = \bT(S, L)$.  It should also be possible to 
 construct a noncommutative de Branges-Rovnyak model space directly 
 from the noncommutative Schur-class function/Agler decomposition 
 pair $(S, {\mathbb K}_{S})$; some machinery in this direction has 
 already been developed in \cite{BBF3}, but we leave the fleshing out 
 of the complete details for another occasion.

\end{document}